\let\latexarabic\arabic
\let\latexdocument\document
\let\latexenddocument\enddocument
\let\document\latexdocument
\let\enddocument\latexenddocument
\let\arabic\latexarabic
\def\rm{}
\let\hat\widehat
\let\tilde\widetilde
\DeclareMathOperator{\diag}{{\rm diag}}
\DeclareMathOperator{\Bernoulli}{{\rm Bernoulli}}
\DeclareMathOperator{\Poisson}{{\rm Poisson}}
\DeclareMathOperator{\op}{{\rm op}}
\DeclareMathOperator*{\argmin}{arg\,min}
\newcolumntype{M}[1]{>{\centering\arraybackslash}m{#1}}
\begin{document}

\jname{Biometrika}
\jyear{?}
\jvol{?}
\jnum{?}
\accessdate{~}


\markboth{K. Z. Lin \and J. Lei}{Dynamic SBM}

\title{Dynamic clustering for heterophilic stochastic block models with time-varying node memberships}

\author{K. Z. LIN}
\affil{Department of Biostatistics, University of Washington
\email{kzlin@uw.edu}}

\author{J. LEI}
\affil{Department of Statistics \& Data Science, Carnegie Mellon University \email{jinglei@andrew.cmu.edu}}

\maketitle

\begin{abstract}
We consider a time-ordered sequence of networks stemming from stochastic block models where nodes gradually change their memberships over time, and no network at any single time point contains sufficient signal strength to recover its community structure. To estimate the time-varying community structure, we develop KD-SoS (kernel debiased sum-of-squares), a method that performs spectral clustering after a debiased sum-of-squared aggregation of adjacency matrices. Our theory demonstrates, via a novel bias-variance decomposition, that KD-SoS achieves consistent community detection in each network, even when heterophilic networks do not require smoothness in the time-varying dynamics of between-community connectivities. We also prove the identifiability of aligning community structures across time based on how rapidly nodes change communities, and develop a data-adaptive bandwidth tuning procedure for KD-SoS. We demonstrate the utility and advantages of KD-SoS through simulations and a novel analysis of the time-varying dynamics in gene coordination in the human developing brain system.
\end{abstract}

\begin{keywords}
Gene co-expression network, human brain development, network analysis,  non-parametric analysis, single-cell RNA-seq, time-varying model
\end{keywords}

\section{Introduction} \label{sec:introduction}
Longitudinal analyses of a network reveal insights into how communities of nodes are lost or created over time.
Due to the complexity of most networks, statistical methods are necessary to uncover these broad dynamics.
Simply put, suppose we observe a time-ordered sequence of networks among the same $n$ nodes represented as symmetric binary matrices $A^{(0)}, \ldots, A^{(1)} \in \{0,1\}^{n\times n}$, where for time $t \in [0,1]$, the $(i,j)$-entry of $A^{(t)}$ denotes the presence or absence of interaction between two nodes at time $t$.
Due to the non-Euclidean nature of the data, it is often challenging to determine if larger-scale community structures have changed over time and, if so, which specific nodes are changing communities at what rate.
\cite{sarkar2006dynamic} developed one of the first methods to investigate these time-varying dynamics.
However, research on the statistical properties of such estimators is recent by comparison \citep{han2015consistent}.
See \cite{kim2018review,pensky2019spectral} for a comprehensive overview. 
Our goal in this paper is to provide a theoretically new method that is both computationally efficient and capable of handling a wide range of network dynamics.

In this work, we focus on understanding the dynamics of gene coordination during human brain development; however, our methods are applicable more broadly to investigate any time-ordered sequence of networks.
Consider the single-cell RNA-seq (scRNA-seq) dataset initially published in \cite{trevino2021chromatin}, where the authors delineated a specific set of 18,160 cells representing how cycling progenitors (orange) develop into numerous types of maturing glutamatergics (shades of teal).
The authors annotated these cells and discovered a set of 993 genes associated with their development.
This data can be visualized through a UMAP \citep{mcinnes2018umap}, a non-linear dimension-reduction method (Figure \ref{fig:cell-umap}A). 
Using typical tools in the single-cell analysis toolbox such as Slingshot \citep{street2018slingshot}, we can order the cells in this lineage from the youngest to oldest cells (Figure \ref{fig:cell-umap}B) and visualize how the gene expression evolves across this lineage (Figure \ref{fig:cell-umap}C). 
However, while this simple analysis reveals apparent dynamics of the mean gene expression across pseudotime, the evolution of gene coordination patterns remains unknown.
Do the genes tightly coordinated at the beginning of development remain tightly coordinated at the end of development, and are there tightly coordinated genes that are not highly expressed?

\begin{figure}[tb]
  \centering
  \includegraphics[width=400px]{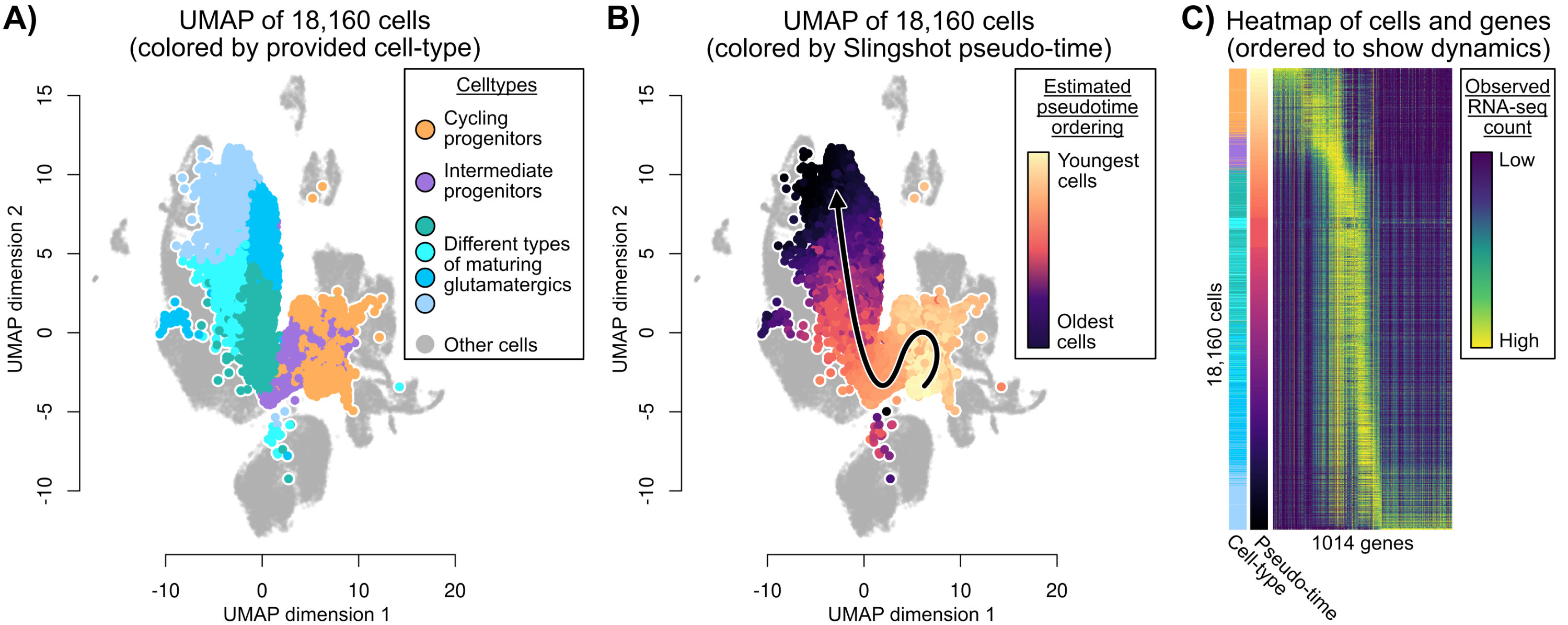}
  \caption
   { 
A) UMAP of the cells among the human developing brain, highlighting the 18,160 cells relevant to our analysis. These denote cell types, such as cycling progenitors (orange) and maturing glutamatergic neurons (shades of teal). 
B) The 18,160 cells colored based on their estimated pseudotime using Slingshot \citep{street2018slingshot}, colored from 
youngest (bright yellow) to oldest (dark purple).
C) Heatmap ordering the cells based on their estimated pseudotime, and ordering the 993 relevant genes for this development.
The gene expression for each cell is colored based on its expression (high as yellow, low as dark blue).
   }
    \label{fig:cell-umap}
\end{figure}

As reviewed in \cite{kim2018review}, many statistical models exist for time-varying networks.
This work focuses on time-varying stochastic block models (SBMs).
SBMs \citep{holland1983stochastic} are a class of prototypical networks that reveal insightful theory while being flexible enough to model many networks in practice.
Broadly speaking, an SBM represents each node as part of $K$ (unobserved) communities, and the presence of an edge between two nodes is determined solely by the community labels of the nodes.
Previous work has proven that there is a fundamental limit on how sparse the SBM can be before recovering the communities is impossible \citep{abbe2017community}. 
However, this fundamental limit could become even sparser when there is a collection of SBMs.
This has led to many different lines of work.
For example, one line of work studies the fixed community structure, where $T$ SBMs are observed with all the same community structure \citep{lei2020consistent,bhattacharyya2020general,paul2020spectral,arroyo2021inference,lei2020bias}. 
A variant is that no temporal structure is imposed across the $T$ networks, but instead, each network slightly deviates from a common community structure at random \citep{chen2020global}.
Another line of work is when $T$ time-ordered SBMs are observed, but there is a changepoint -- all the networks before or all the networks after the changepoint share the same community structure \citep{liu2018global,wang2021optimal}.

Despite the abundance of aforementioned SBM models equipped with rigorous theory, they only partially apply to our intended analysis of the human developing brain.
To provide the reader with a scope of the analysis, we plot the correlation network among the 993 genes for three different time points in Figure \ref{fig:graphs_unlabeled}.
These networks were constructed from 12 non-overlapping partitions of cells across the estimated time, and we observe potentially gradual changes in community structure over time.  
See the Appendix for more details on the preprocessing. 
Hence, we turn towards time-varying SBM models, where the community structure changes slowly over time.
To date, \cite{pensky2019spectral} and \cite{keriven2020sparse} are among the only works that study this setting.
This difficulty arises from the simple observation that changes in community structure are discrete, which prevents typical non-parametric techniques from being easily applied.
However, as discussed later, we take a different theoretical approach to analyze this problem and prove consistent estimation of each network's community under broader assumptions. 
We briefly note that, beyond time-varying SBMs, there are works on time-varying latent-position networks \citep{gallagher2021spectral, athreya2022discovering}. 
Latent-position networks are more general than SBMs, as they do not impose a community structure. 
In this work, we focus on SBMs as they are more applicable to understanding the gene coordination dynamics in the developing brain.

\begin{figure}[tb]
  \centering
  \includegraphics[width=400px]{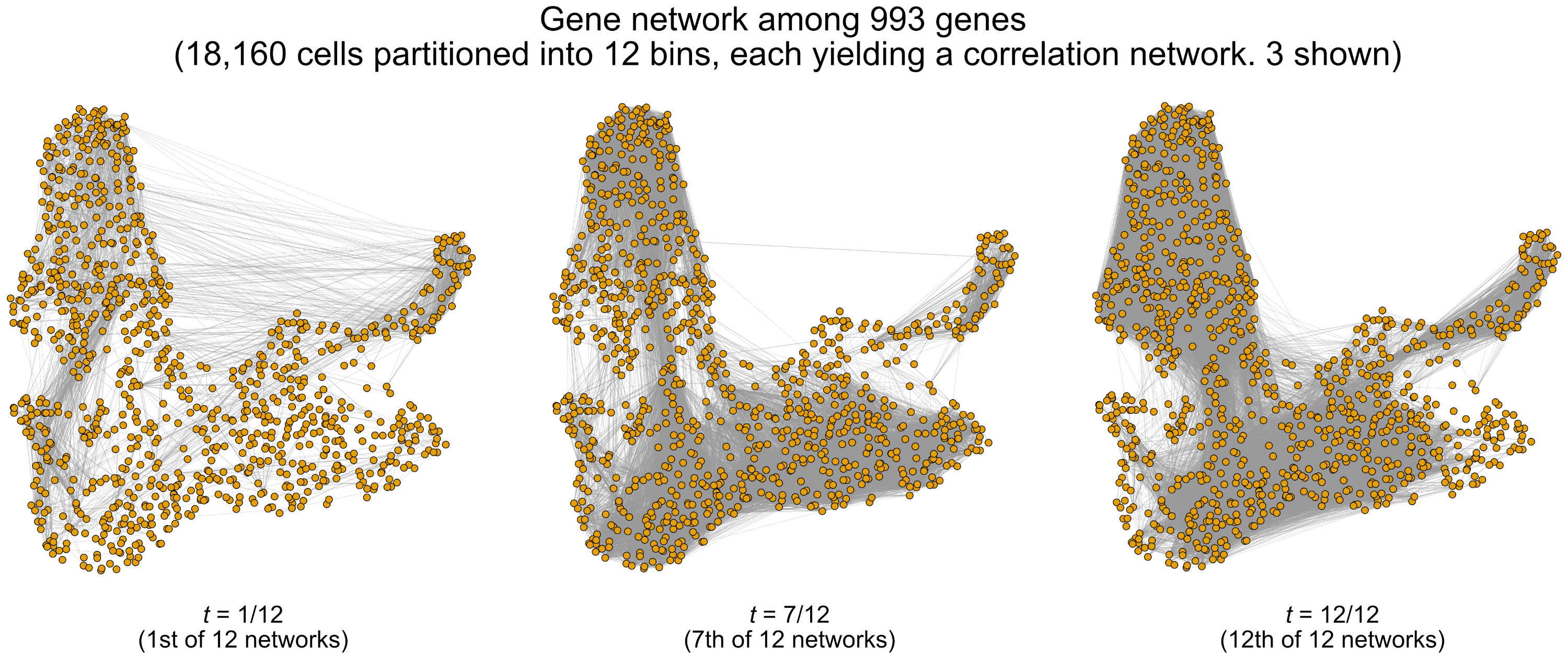}
  \caption
   { 
Three of twelve networks, for $t=1/12$ (i.e., gene network among the youngest cells), $t=7/12$, and $t=12/12$ (i.e., gene network among the oldest cells). These are constructed based on thresholding the correlation matrix among the 993 genes. The visual position of each gene is fixed for each network, but the edges among the genes vary.
   }
    \label{fig:graphs_unlabeled}
\end{figure}

The main contribution of this paper is a novel and computationally efficient method equipped with theoretical guarantees regarding community estimation in temporal SBMs with a time-varying community structure.
Our method is inspired by \cite{lei2020bias}, where a debiased sum-of-squared estimator was proven to consistently estimate communities for fixed-community multi-layer networks, allowing for both homophilic and heterophilic networks.
We adapt this to the time-varying setting by introducing a kernel smoother and prove, through a novel bias-variance decomposition, that it can consistently estimate the time-varying communities, holding all other assumptions the same.
In particular, while the nodes are gradually changing communities, we impose almost no conditions on the connectivity patterns except the positivity of the locally averaged squared connectivity matrix.
We also formalize the information-theoretic relation between the number of networks and the rate at which nodes change communities as an identifiability condition.

Our second contribution is a tuning procedure for an appropriate kernel bandwidth that also does not impose restrictions on how the community relations change across networks.
Leave-one-out tuning procedures designed for other matrix applications \citep{yang2020estimating}, where the network at time $t$ is predicted using temporally surrounding networks, are inappropriate since these procedures require community relations to change smoothly over time.
This also precludes Lepskii-based procedures \citep{pensky2019spectral}.
In contrast, our procedure is designed based on the cosine distance between eigenspaces -- for the network at time $t$, the cosine distance is computed between the eigenspaces of kernel-weighted networks for a time less than $t$ and of kernel-weighted networks for a time greater than $t$, respectively.
The bandwidth that minimizes this distance, averaged over all $t$, is deemed the most appropriate. 
We show through simulation studies and a thorough investigation of the scRNA-seq data that this procedure selects a desirable bandwidth. 

\section{Dynamic stochastic block model} \label{sec:model}

Let $n$ denote the number of nodes, and $m^{(0)} \in \{1,\ldots, K\}^n$ denote the initial membership vector, where $K$ is a fixed number of communities. 
That is, $m^{(0)}_i = k$ for $k\in \{1,\ldots,K\}$ if node $i \in \{1,\ldots,n\}$ starts in community $k$.
We posit that each of the $n$ nodes changes communities according to a $\Poisson(\gamma)$ process with $\gamma >0$, independent of all other nodes. 
This means node $i$ changes communities at random times $0 < x_{i,1} < x_{i,2} < \ldots < 1$ where the expected difference between consecutive times is $1/\gamma$, and the node changes to one of the $K-1$ other communities with some probability.
We place no assumptions about the specifics of how nodes are assigned to new communities.  
Instead, our assumptions only mandate the frequency of nodes changing communities and the independence between nodes.
This node-switching process generates membership vectors $m^{(t)}$ for $t \in [0,1]$.

Although each node can potentially change communities multiple times throughout $t\in[0,1]$, we assume that only $T$ networks at fixed time points are observed for  
\[
\mathcal{T} = \Big\{\frac{1}{T},\; \frac{2}{T},\; \ldots,\;1\Big\}.
\] 
The generative model for a specific graph $A^{(t)} \in \{0,1\}^{n \times n}$ for a time $t \in \mathcal{T}$ is as follows.
Let $B^{(t)} \in [0,1]^{K \times K}$ be a symmetric matrix that denotes the connectivity matrix among the $K$ communities for a fixed positive integer $K$, and let the sequence of matrices of $B^{(t)}$'s for $t \in [0,1]$ be deterministic.
Let $m^{(t)}$ be the random membership vector based on the above $\Poisson(\gamma)$ process.
Each membership vector $m^{(t)}$ can be encoded as one-hot membership matrix $M^{(t)} \in \{0,1\}^{n\times K}$ where $M_{ik}^{(t)} = 1$ if and only if node $i$ is in community $k$, and 0 otherwise. 
Then, the probability matrix $Q^{(t)} \in [0,1]^{n\times n}$ is defined as
\begin{equation} \label{eq:model:qt}
Q^{(t)} = \rho_n \cdot M^{(t)} B^{(t)} \big(M^{(t)}\big)^\top,
\end{equation}
for a network density parameter $\rho_n \in (0,1)$, and $P^{(t)} = Q^{(t)} - \diag(Q^{(t)})$.
The observed graph $A^{(t)}$ for time $t \in \mathcal{T}$ is then sampled according to
\begin{equation} \label{eq:adjacency_matrix}
A^{(t)}_{ij} = \begin{cases}
\Bernoulli\big(P_{ij}^{(t)}\big), &\quad \text{if } i > j,\\
0 &\quad \text{if }i =j,\\
A^{(t)}_{ji} &\quad \text{otherwise}.
\end{cases}
\end{equation}
This implies the following relation: 
\[
\mathbb{E}\big(A^{(t)}\big) = P^{(t)}  = Q^{(t)}- \diag\big(Q^{(t)}\big).
\]

For two membership matrices $M$, $M'$, define their confusion matrix $C(M, M')$ as
\begin{equation} \label{eq:confusion_population}
C_{k\ell}(M, M')= \Big|\big\{i \in \{1,\ldots,n\} : M_{ik} = 1 \text{ and } M'_{i\ell} =1 \big\}\Big|.
\end{equation}
Since the outputs of most clustering algorithms do not distinguish label permutations, to match the label permutation between $M$ and $M'$, we solve the following assignment problem,
\begin{equation} \label{eq:hungarian_assignment_population}
R(M, M') =\arg\max_{R \in \mathbb{Q}_K} \big\| \diag\{C(M,M')R\}\big\|_1,
\end{equation}
where $\mathbb{Q}_K$ is the set of $K\times K$ permutation matrices.
This can be formulated as an \emph{Hungarian assignment problem}, which can be solved via linear programming. 
Equipped with $C(M,M')$ and $R(M,M')$, we define $L(M, M')$ to be the relative Hamming distance between the two membership matrices $M$ and $M'$,
\begin{equation}\label{eq:hamming}
L(M, M') =1- \frac{1}{n} \big\|\diag\{C(M,M')R(M,M')\}\|_1\,,
\end{equation}
or, in other words, the total proportion of mis-clustered nodes after optimal alignment.
Furthermore, we define a square matrix $X\in\mathbb{R}^{K \times K}$ to be diagonally dominant if $X_{kk} > \sum_{\ell:\ell \neq k}|X_{k\ell}|$ for each $k\in\{1,\ldots,K\}$. 
If $C(M,M')R(M,M')$ and $C(M',M)R(M',M)$ are both diagonally dominant, we say that the two membership matrices $M, M'$ are \emph{alignable}.
This means there is an unambiguous mapping of the $K$ communities in $M$ to those in $M'$.

Our theoretical goal is to show the interplay between the number of nodes $n$, the number of observed networks $T$, the community switching rate $\gamma$, and the network-sparsity parameter $\rho_n$ needed to estimate the $T$ membership matrices across time consistently. 
The existing theory of single-layer SBMs has already shown that if $n\rho_n \gtrsim \log(n)$ for a single network, spectral clustering can asymptotically recover the community structure. 
At the same time, no method can achieve exact recovery if $n\rho_n \lesssim \log(n)$ \citep{bickel2009nonparametric,lei2015consistency,abbe2017community}. 
We are primarily interested in the latter setting, hoping that the temporal structure can enhance the signal for estimation.
Some previous methods and theoretical analyses for this setting require strict assumptions on connectivity matrices $\{B^{(t)}\}$ \citep{pensky2019spectral,keriven2020sparse} -- these matrices are required to vary across time smoothly and have strictly positive eigenvalues, i.e., cannot display patterns of heterophily where edges between communities are more frequent than edges within communities. 
We aim to develop a method that does not require these assumptions, building upon the work in \cite{lei2020consistent} and \cite{lei2020bias} to extend the line of work to temporal SBMs with varying communities. 
This requires a careful analysis of a 'bias'' term that bounds the impact of averaging over adjacency matrices with slight variations of the true community structure on spectral clustering.
Additionally, we wish to study the regime of community switching rate $\gamma$ that enables the researcher to align the community structure at one time point with that at the next. 
This quality is vital for interpreting the temporally dynamic network community structure, and is an aspect not studied in \cite{lei2020bias}, \cite{keriven2020sparse}, and other related work.

\section{Debiasing and kernel smoothing} 
\subsection{Estimator} \label{ss:algorithm}
Our estimator, the kernel debiased sum-of-squared (KD-SoS) spectral clustering, is motivated by \cite{lei2020bias}, where we adopt the debiased sum of squared adjacency matrices to handle heterophilic networks. 
We describe our method using the box kernel for simplicity, but the method and theory can be extended to any kernels that are bounded, continuous, symmetric, non-negative, and integrate to 1.
The estimation procedure consists of two phases: estimating the communities for each time $t$ by smoothing across time, and aligning the communities across time.

Provided a bandwidth $r \in [0,1]$ and a number of communities $K$, our estimator applies the following procedure for any $t \in \mathcal{T}$. 
First, compute the debiased sum of squared adjacency matrices, where the summation is over all networks within a bandwidth $r$,
\begin{equation}\label{eq:z_box}
Z^{(t;r)} = \sum_{s \in \mathcal{S}(t;r)}\left\{(A^{(s)})^2 - D^{(s)}\right\}, \quad \text{where}\quad \mathcal{S}(t;r) =  \mathcal{T} \cap [t-r, t+r],
\end{equation}
and $D^{(t)} \in \mathbb{R}^{n\times n}$ is the (random) diagonal matrix encoding the degrees of the $n$ nodes, i.e.,
\[
\big[D^{(t)}\big]_{ii} = \sum_{j=1}^{n}A^{(t)}_{ij}, \quad \text{for all }i \in \{1,\ldots,n\}.
\]
Second, compute eigen-decomposition of $\hat{Z}^{(t;r)}$,
\begin{equation}\label{eq:z_box2}
\hat{Z}^{(t;r)} = \hat{U}^{(t;r)} \hat{\Lambda}^{(t;r)} (\hat{U}^{(t;r)})^\top,
\end{equation}
where the diagonal entries of $\hat{\Lambda}^{(t;r)}$ are in descending order, and lastly, apply K-means clustering row-wise on the first $K$ columns of $\hat{U}^{(t;r)}$.
This yields the estimated memberships $\tilde{m}^{(t)} \in \{1,\ldots,K\}^n$. 
This debiased sum-of-squared estimator is proven in \cite{lei2020bias} to consistently estimate communities under the fixed-community setting, where the squaring of adjacency matrices enable the population connectivity matrices $\{B^{(t)}\}$ to be semidefinite, and the debiasing corrects for the additive noise incurred by this squaring. 
This completes the estimation for each individual time point.

After estimating the communities for all $T$ time points, we align the estimated communities across time. 
Specifically, initialize $\hat{M}^{(1/T)}$ as the one-hot membership matrix of $\tilde{m}^{(1/T)}$. 
Let $\delta = 1/T$. 
Then, suppose the aligned membership $\hat{M}^{(t)}$ has been obtained, and we want to align the membership for $\tilde{M}^{(t+\delta)}$, the one-hot membership matrix for $\tilde{m}^{(t+\delta)}$. 
Define the confusion matrix
\begin{equation} 
\tilde{C}^{(t,t+\delta)} = C(\hat M^{(t)},\tilde M^{(t+\delta)})\,,
\end{equation}
according to the definition in \eqref{eq:confusion_population}, and solve the following assignment problem,
\begin{equation} 
\hat{R}^{(t,t+\delta)} = R(\hat M^{(t)},\tilde M^{(t+\delta)}).
\end{equation}
As mentioned in \eqref{eq:hungarian_assignment_population}, this is called a Hungarian assignment problem and can be solved in practice via linear programming.
Then, we align $\tilde M^{(t+\delta)}$ with $\hat M^{(t)}$ by using 
\[
\hat M^{(t+\delta)}=\tilde M^{(t+\delta)}\hat{R}^{(t,t+\delta)}\,.
\]
Let the estimated memberships for time $t$ to be $\hat{m}^{(t)}$ where $\hat{m}^{(t)} = k$ if and only if $\tilde{m}^{(t)} = \ell$ and $\hat{R}^{(t,t+\delta)}_{\ell k} = 1$. 
Finally, we return the final estimated memberships  $\hat{m}^{(t)}$ for $t\in\mathcal{T}$. We document the pseudocode of KD-SoS in the Appendix.

Optionally, we can compute if $\tilde{C}^{(t,t+\delta)}\hat{R}^{(t,t+\delta)}$ and $(\tilde{C}^{(t,t+\delta)}\hat{R}^{(t,t+\delta)})^\top$ are both diagonally dominant for all $t\in\mathcal{T}\backslash \{1\}$. 
If so, we say that the entire sequence of communities in $\mathcal{T}$ is \emph{alignable}, which means we can track the evolution of specific nodes and communities across time.

\subsection{Bias-variance tradeoff for spectral clustering}

We first describe the bias-variance decomposition foundational to our work.
Let $n_1^{(t)},\ldots,n_K^{(t)}$ denote the number of nodes in each community at time $t$, and $n_{\min}^{(t)} = \min\{n^{(t)}_1,\ldots,n_K^{(t)}\}$.
Let $\Delta^{(t)} \in \mathbb{R}^{K\times K}$ denote the diagonal matrix where 
\[
\diag\big(\Delta^{(t)}\big) = \big\{n_1^{(t)},\ldots, n_K^{(t)}\big\}.
\]
Let  $\Pi^{(t)} = M^{(t)} (\Delta^{(t)})^{-1}M^{(t)})^\top$ be the projection matrix of the column subspace of $M^{(t)}$. Additionally, define the noise matrix $X^{(t)} = P^{(t)} - A^{(t)}$.
Observe the following bias-variance decomposition.
\begin{lemma} \label{lem:decomposition}
Given the model in Section \ref{sec:model}, the following deterministic equality holds,
\begin{align}
& \sum_{s \in \mathcal{S}(t;r)}(A^{(s)})^2 - D^{(s)}=\underbrace{\Big[  \sum_{s \in \mathcal{S}(t;r)}(Q^{(s)})^2 - \Pi^{(t)} (Q^{(s)})^2 \Pi^{(t)}\Big]}_{I}  \label{eq:ideal_decomposition}\\
&\qquad+ \underbrace{\Big[  \sum_{s \in \mathcal{S}(t;r)} \big[\diag\{Q^{(t)}\}\big]^2 - 
Q^{(t)}\diag(Q^{(t)}) - \diag(Q^{(t)})Q^{(t)}\Big]}_{II}
 \nonumber\\
&\qquad+\underbrace{\Big[  \sum_{s \in \mathcal{S}(t;r)}X^{(s)}P^{(s)}+ P^{(s)}X^{(s)} \Big]}_{III} + \underbrace{\Big[  \sum_{s \in \mathcal{S}(t;r)}(X^{(s)})^2- D^{(s)}\Big]}_{IV}
\nonumber \\
&\qquad+\underbrace{\Big[ \sum_{s \in \mathcal{S}(t;r)}\Pi^{(t)} (Q^{(s)})^2 \Pi^{(t)}\Big]}_{V}.\nonumber
\end{align}
\end{lemma}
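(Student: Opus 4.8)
The statement is a purely deterministic matrix identity -- no probabilistic input is needed -- so the plan is simply to expand the left-hand side and regroup the terms to match the five bracketed quantities on the right. I would start from the noise decomposition $A^{(s)} = P^{(s)} + X^{(s)}$ (equivalently $X^{(s)} = A^{(s)} - P^{(s)}$, the sign convention under which the signs of terms $III$ and $IV$ are consistent) and expand, for each $s \in \mathcal{S}(t;r)$,
\[
(A^{(s)})^2 = (P^{(s)})^2 + P^{(s)}X^{(s)} + X^{(s)}P^{(s)} + (X^{(s)})^2 ,
\]
being careful not to commute the two factors. Summing over $s \in \mathcal{S}(t;r)$ and subtracting $\sum_{s} D^{(s)}$ already isolates term $III = \sum_{s}\big(P^{(s)}X^{(s)} + X^{(s)}P^{(s)}\big)$ and term $IV = \sum_{s}\big((X^{(s)})^2 - D^{(s)}\big)$; the debiasing is accounted for precisely by attaching $D^{(s)}$ to the $(X^{(s)})^2$ block.

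Next I would expand the leftover $\sum_{s}(P^{(s)})^2$ using $P^{(s)} = Q^{(s)} - \diag(Q^{(s)})$ from \eqref{eq:model:qt}. Again keeping the order of the factors,
\[
(P^{(s)})^2 = (Q^{(s)})^2 - Q^{(s)}\diag(Q^{(s)}) - \diag(Q^{(s)})Q^{(s)} + \big(\diag(Q^{(s)})\big)^2 .
\]
The last three summands, summed over $s$, are exactly term $II$ (with the running index $s$ in the diagonal factors), so what remains is $\sum_{s}(Q^{(s)})^2$. I would then split this trivially as
\[
\sum_{s}(Q^{(s)})^2 = \Big[\sum_{s}(Q^{(s)})^2 - \Pi^{(t)}(Q^{(s)})^2\Pi^{(t)}\Big] + \sum_{s}\Pi^{(t)}(Q^{(s)})^2\Pi^{(t)} ,
\]
which is precisely term $I$ plus term $V$. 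Adding $I+II+III+IV+V$ reproduces $\sum_{s}\big((A^{(s)})^2 - D^{(s)}\big)$, completing the proof. This last add-and-subtract is the only non-forced choice in the decomposition: it is engineered for the later analysis, where $V$ carries the signal whose column space is that of $\Pi^{(t)} = M^{(t)}(\Delta^{(t)})^{-1}(M^{(t)})^\top$ and hence encodes $M^{(t)}$, while $I$ measures the bias caused by memberships drifting within the window $[t-r,t+r]$.

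There is essentially no analytic obstacle here; the whole argument is bookkeeping, and the main risk is clerical. The three points demanding care are: (i) never commuting matrix products, since $Q^{(s)}$, $\diag(Q^{(s)})$, $P^{(s)}$, and $X^{(s)}$ do not commute in general, so the cross terms must be kept as ordered pairs; (ii) fixing the sign convention $X^{(s)} = A^{(s)} - P^{(s)}$ so that term $III$ enters with a plus sign; and (iii) using the summation index $s$, rather than the fixed target time $t$, inside term $II$ when expanding $(P^{(s)})^2$. With these conventions pinned down, the five-way grouping above is immediate and the equality holds entry by entry.
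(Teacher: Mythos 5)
Your proof is correct and takes exactly the same route as the paper's: expand $(A^{(s)})^2$ via the noise decomposition, expand $(P^{(s)})^2$ via $P^{(s)} = Q^{(s)} - \diag(Q^{(s)})$, and add-and-subtract $\Pi^{(t)}(Q^{(s)})^2\Pi^{(t)}$ to split the remainder into terms $I$ and $V$. Your two side observations are also well taken: term $II$ in the lemma statement should indeed carry the running index $s$ rather than $t$, and the sign convention $X^{(s)} = A^{(s)} - P^{(s)}$ (rather than the paper's stated $X^{(t)} = P^{(t)} - A^{(t)}$) is the one that makes the plus sign on term $III$ consistent --- a harmless discrepancy, since $III$ only ever enters the later analysis through its operator norm.
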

We deem this decomposition as the \emph{bias-variance decomposition} for dynamic SBMs since term $I$ represents the deterministic bias dictated by nodes changing communities, term $II$ represents the deterministic diagonal bias, term $III$ represents a random error term centered around 0, term $IV$ represents the random variance term, and term $V$ represents the deterministic signal matrix containing the community information.
We note that this decomposition differs from those used in \cite{pensky2019spectral} and \cite{keriven2020sparse}, which instead yield a decomposition that requires smoothness assumptions in $\{B^{(t)}\}$ to derive community-consistency. 

\subsection{Consistency of time-varying communities} \label{ss:consistency}

In the following, we discuss the assumptions and theoretical guarantees for KD-SoS. 
We define the following notation.
For two sequences $a_n$ and $b_n$, we define $a_n = O(b_n)$, $a_n= o(b_n)$, and $a_n = \omega(b_n)$ to denote $a_n$ is asymptotically bounded above by $b_n$ by a constant, $\lim a_n/b_n = 0$, or $\lim a_n/b_n = \infty$ respectively.
For a symmetric matrix $X$, let $\lambda_{\min}(X)$ denote its smallest eigenvalue in absolute value.

\vspace{.5em}
\begin{assumption}[Asymptotic regime] \label{ass:asymptotic}
Assume a sequence where $n$ and $T$ are increasing, $n,T\geq 3$, and $T \log(T)/n = o(1)$.
Additionally, $\rho_n$ and $\gamma$ can vary with $n$ and $T$, but there exists a constant $c_1$ such that $n\rho_n \leq c_1$. 
Furthermore, assume $K$ is fixed.
\end{assumption}
\vspace{.5em}

We codify the membership dynamics described in Section \ref{sec:model} with the following assumption.
\vspace{1em}
\begin{assumption}[Independent Poisson community changing rate] \label{ass:poisson}
Assume for a given community switching rate  $\gamma\geq 0$, each node changes memberships at random times between $[0,1]$ according to a $\Poisson(\gamma)$ process, independent of all other nodes.
\end{assumption}

\vspace{1em}
\begin{assumption}[Stable community sizes] \label{ass:cluster_size}
Assume that across all $t \in [0,1]$ and all communities $k\in\{1,\ldots,K\}$, there exists a constant $c_2$ independent of $n,T,\gamma,\rho_n$ satisfying $1 \leq c_2$ such that 
\[
\mathbb P\left\{n_k^{(t)} \in \Big[\frac{1}{c_2 K}\cdot n, \;\frac{c_2}{K}\cdot n\Big], \quad \text{for all } k\in\{1,\ldots,K\},\;t\in \mathcal T\right\}\ge 1-\epsilon_{c_2,n}.
\]
for some $\epsilon_{c_2,n}\rightarrow 0$.
\end{assumption}

\vspace{1em}
\begin{assumption}[Minimum eigenvalue of aggregated connectivity matrix] \label{ass:connectivity_min_eig}
Assume that the sequence $\{B^{(t)}\}$ from $t\in[0,1]$ is fixed and is an integrable process across each $(i,j)\in\{1,\ldots,K\}^2$ coordinate. Additionally, for a chosen $\delta>0$, we define
\[
c_{\delta} = \min_{\substack{t_1, t_2 \in [0,1],\\t_2-t_1 \geq 2\delta}} \lambda_{\min}\Big\{\frac{1}{t_2-t_1}\int_{s=t_1}^{t_2}(B^{(s)})^2ds\Big\} \geq 0.
\]
\end{assumption}

\vspace{1em}
\begin{assumption}[Alignability] \label{ass:alignability}
Assume that along the sequence of $\gamma$ and $T$,
\begin{equation} \label{eq:alignability}
\gamma/T = o(1).
\end{equation}
\end{assumption}

\vspace{1em}
\begin{remark}[Additional remark for Assumption \ref{ass:cluster_size}] \label{remark_cluster_size}
Assumption \ref{ass:cluster_size} extends the balanced community size condition from a single time point to a uniform version across all time points. 
Notably, this assumption only restricts how nodes are assigned to new communities through the community sizes. 
It precludes the scenario where communities vanish during the time interval $[0,1]$.  
This assumption serves two purposes: 
First, this is needed to control the error bound at each time point. 
Second, when combined with Assumption \ref{ass:alignability}, it guarantees the alignability of estimated communities across time.  
The exact relationship between $c_2$ and $\epsilon_{c_2,n}$ depends on the switching rate $\gamma$, as well as the transition probabilities between communities when a node changes membership. 
We provide a concrete example in Section \ref{sec:identifiability} of how specific transition mechanisms can satisfy Assumption \ref{ass:cluster_size} with high probability.
\end{remark}

\vspace{1em}
\begin{remark}[Additional remark for Assumption \ref{ass:connectivity_min_eig}] \label{remark_eigenvalue1}
Assumption \ref{ass:connectivity_min_eig} states that column space of the matrices $\{(B^{(t)})^2\}$ should span enough of $\mathbb{R}^K$  in an average sense among all $t \in [t_1,t_2]$. 
That is, $B^{(t)}$ can be rank deficient for any particular $t \in [t_1,t_2]$, but as long as $\delta$ is large enough, the average of $\{(B^{(t)})^2\}$ is full rank. 
As we will discuss later, $c_{\delta}$ has a nuanced relation with our bandwidth $r$ and the consistency of our estimator -- estimating the community structure consistently for each time $t$ will be difficult if we choose a bandwidth $r=\delta$ where $c_{\delta} \approx 0$. 
\end{remark}

\vspace{1em}
\begin{remark}[Additional remark for Assumption \ref{ass:alignability}]
As we will show later in Section \ref{sec:identifiability}, Assumption \ref{ass:alignability} is a label permutation identifiability assumption. 
Without it, KD-SoS can still estimate each network's community structure. However, it would be difficult to align the communities across time, where ``alignability'' will be defined later as the main focus of Section \ref{sec:identifiability}.
Recall that since each node changes memberships independently of one another according to the Poisson($\gamma$) process, the expected number of nodes to change memberships within a time interval of $1/T$ (i.e., the time elapsed between two consecutively observed networks) is roughly $n\gamma/T$ if $\gamma/T\lesssim 1$. 
Combined with Assumption \ref{ass:cluster_size}, a more explicit equivalent statement of \eqref{eq:alignability} is
\[
n\gamma/T = o(n/K).
\]
This demonstrates the intuition that the networks' communities are alignable across time if the number of changes between consecutive networks is less than the smallest community size.
\end{remark}

\vspace{1em}
Provided these assumptions, KD-SoS's estimated communities have the following point-wise relative Hamming estimation error for the network at time $t \in \mathcal{T}$. 
Let the function $(x)_+$ denote $\min\{0,x\}$.

\begin{theorem} \label{thm:consistency}
Given Assumptions \ref{ass:asymptotic}, \ref{ass:poisson},  \ref{ass:cluster_size}, and \ref{ass:connectivity_min_eig} for the model in Section \ref{sec:model}, for a bandwidth $r \in [0,1]$ satisfying $(rT+1)^{1/2}n\rho_n \geq c_3 \log^{1/2}(rT+ n+1)$ for some constant $c_3> 1$, then at any particular $t \in \mathcal{T}$,
\begin{equation} \label{eq:theorem_rate}
L\big(M^{(t)}, \hat{M}^{(t)}\big) \leq c \cdot \frac{1}{\{1-(\gamma r+\log(n)/n)^{1/2}\}^2_+}\cdot\Big\{\gamma r + \frac{\log(n)}{n} + \frac{1}{n^2} + \frac{\log(rT+n)}{rTn^2\rho_n^2}\Big\},
\end{equation}
with probability at least $1-O\{(rT+n)^{-1}\}-\epsilon_{c_2,n}$ for some constant $c>0$ that depends on $c_1$, $c_2$, $c_3$, $c_\delta$, and $K$. 
\end{theorem}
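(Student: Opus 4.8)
\emph{Proof plan.} The plan is to write $\hat{Z}^{(t;r)} = V + E$ with $V$ the signal term of Lemma~\ref{lem:decomposition} and $E = I+II+III+IV$, then run the standard spectral‑perturbation‑plus‑$K$‑means argument and track how each of the four remaining terms feeds into the rate. The matrix $V=\Pi^{(t)}\big[\sum_{s\in\mathcal{S}(t;r)}(Q^{(s)})^2\big]\Pi^{(t)}$ is positive semidefinite of rank at most $K$ with column space exactly $\spann(M^{(t)})$, so its $K$ leading unit eigenvectors, stacked as $U$, have rows taking only $K$ distinct values---one per community at time $t$---with pairwise separation of order $(n_{\max}^{(t)})^{-1/2}$. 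Hence, once $\lambda_K(V)$ is bounded below and $\|E\|_{\op}$ above with $\|E\|_{\op}<\lambda_K(V)/2$, Weyl's inequality gives a leading eigengap of order $\lambda_K(V)-2\|E\|_{\op}$, the Davis--Kahan theorem bounds the Frobenius distance between the estimated and population leading eigenspaces by $O\big(\sqrt{K}\,\|E\|_{\op}/(\lambda_K(V)-2\|E\|_{\op})\big)$, and the usual $K$‑means rounding step---count the rows whose individual perturbation exceeds half the separation, then invoke near‑optimality of the clustering---gives $L(G^{(t)},\hat{G}^{(t)})\lesssim (n_{\max}^{(t)}/n)\,K\,\|E\|_{\op}^2/(\lambda_K(V)-2\|E\|_{\op})^2$. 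Since $n_{\max}^{(t)}/n\le c_2/K$ on the event of Assumption~\ref{ass:cluster_size}, the whole problem reduces to estimating $\|E\|_{\op}/\lambda_K(V)$, and the prefactor $(1-(\gamma r+\log(n)/n)^{1/2})_+^{-2}$ in \eqref{eq:theorem_rate} will be $(1-2\|E\|_{\op}/\lambda_K(V))^{-2}$ once the dominant, bias, part of $\|E\|_{\op}/\lambda_K(V)$ is shown to be of order $(\gamma r+\log(n)/n)^{1/2}$.

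For the signal I would first replace each $M^{(s)}$, $\Delta^{(s)}$ inside $V$ by $M^{(t)}$, $\Delta^{(t)}$; because $\|M^{(s)}-M^{(t)}\|_F$ and $\|\Delta^{(s)}-\Delta^{(t)}\|_{\op}$ are controlled by the number of nodes $i$ with $m^{(s)}_i\neq m^{(t)}_i$, this replacement costs the same order as the bound on $\|I\|_{\op}$ below. After replacement, $\Pi^{(t)}M^{(t)}=M^{(t)}$ collapses $V$ to $\rho_n^2\,M^{(t)}\big[\sum_{s\in\mathcal{S}(t;r)}B^{(s)}\Delta^{(t)}B^{(s)}\big](M^{(t)})^\top$, whose nonzero eigenvalues equal those of $(\Delta^{(t)})^{1/2}\rho_n^2\big[\sum_s B^{(s)}\Delta^{(t)}B^{(s)}\big](\Delta^{(t)})^{1/2}$ and are therefore at least $\rho_n^2(n_{\min}^{(t)})^2\,\lambda_{\min}\big(\sum_{s\in\mathcal{S}(t;r)}(B^{(s)})^2\big)$. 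Comparing the discrete average $|\mathcal{S}(t;r)|^{-1}\sum_{s\in\mathcal{S}(t;r)}(B^{(s)})^2$ with the population average $(2r)^{-1}\int_{t-r}^{t+r}(B^{(s)})^2\,ds$ by a Riemann‑sum estimate (using integrability from Assumption~\ref{ass:connectivity_min_eig}), and applying Assumption~\ref{ass:connectivity_min_eig} to this interval of length $2r\ge 2\delta$, lower‑bounds $\lambda_{\min}\big(\sum_{s\in\mathcal{S}(t;r)}(B^{(s)})^2\big)$ by a constant multiple of $rT c_\delta$; with $n_{\min}^{(t)}\ge n/(c_2 K)$ from Assumption~\ref{ass:cluster_size} this yields $\lambda_K(V)\gtrsim\rho_n^2 n^2 rT\,c_\delta/K^2$, net of the replacement error.

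Next I would bound the four perturbation terms on the event of Assumption~\ref{ass:cluster_size}, and for the random terms conditionally on the $\Poisson(\gamma)$ membership process of Assumption~\ref{ass:poisson}. Term $I$ is deterministic given the memberships; splitting $(Q^{(s)})^2-\Pi^{(t)}(Q^{(s)})^2\Pi^{(t)}=(\mathrm{Id}-\Pi^{(t)})(Q^{(s)})^2+\Pi^{(t)}(Q^{(s)})^2(\mathrm{Id}-\Pi^{(t)})$ and using $\|(\mathrm{Id}-\Pi^{(t)})M^{(s)}\|_{\op}\le\|(\mathrm{Id}-\Pi^{(t)})(M^{(s)}-M^{(t)})\|_F\lesssim|\{i:m^{(s)}_i\neq m^{(t)}_i\}|^{1/2}$ gives $\|I\|_{\op}\lesssim\rho_n^2 n^{3/2}\sum_{s\in\mathcal{S}(t;r)}|\{i:m^{(s)}_i\neq m^{(t)}_i\}|^{1/2}$. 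Under Assumption~\ref{ass:poisson} the count of nodes changing community anywhere in $[t-r,t+r]$ is a sum of $n$ independent Bernoulli variables with mean $\lesssim n\gamma r$, hence at most a constant times $n\gamma r+\log(n)$ with probability $1-O(n^{-2})$ by a Chernoff bound; feeding this and $|\mathcal{S}(t;r)|\lesssim rT$ through Cauchy--Schwarz yields $\|I\|_{\op}\lesssim\rho_n^2 n^2 rT\,(\gamma r+\log(n)/n)^{1/2}$, i.e.\ $\|I\|_{\op}/\lambda_K(V)\lesssim(\gamma r+\log(n)/n)^{1/2}$. Term $II$ is deterministic too; the key observation is that $\diag(Q^{(s)})$ has entries $\rho_n B^{(s)}_{m^{(s)}_i m^{(s)}_i}=O(\rho_n)$, so $\|II\|_{\op}\lesssim rT(\rho_n^2+\rho_n\|Q^{(s)}\|_{\op})\lesssim rT\,n\rho_n^2$, whence $\|II\|_{\op}/\lambda_K(V)\lesssim 1/n$; the deterministic diagonal part $\mathbb{E}[IV]=-\sum_s\diag\big(\sum_k(P^{(s)}_{ik})^2\big)$ is of the same $rT\,n\rho_n^2$ order. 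Finally $III$ and $IV-\mathbb{E}[IV]$ are each sums of $|\mathcal{S}(t;r)|\lesssim rT$ independent, mean‑zero, symmetric matrices, and a matrix‑concentration argument adapted to sparse networks---mirroring the corresponding step in \cite{lei2020bias}, which exploits the self‑debiasing $(A^{(s)})^2-D^{(s)}$---should bound both by $\lesssim n\rho_n\,(rT\log(rT+n))^{1/2}$ with probability $1-O((rT+n)^{-1})$, precisely when $(rT+1)^{1/2}n\rho_n\gtrsim\log^{1/2}(rT+n+1)$, giving $(\|III\|_{\op}+\|IV-\mathbb{E}IV\|_{\op})/\lambda_K(V)\lesssim(\log(rT+n)/(rTn^2\rho_n^2))^{1/2}$.

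Adding the four relative bounds gives $\|E\|_{\op}/\lambda_K(V)\lesssim(\gamma r+\log(n)/n)^{1/2}+1/n+(\log(rT+n)/(rTn^2\rho_n^2))^{1/2}$; squaring and substituting into the first‑paragraph bound, with $n_{\max}^{(t)}/n\le c_2/K$ and reduced eigengap $(1-2\|E\|_{\op}/\lambda_K(V))_+$, will reproduce \eqref{eq:theorem_rate} with a constant $c$ depending only on $c_1,c_2,c_3,c_\delta,K$; the exceptional set has probability $O((rT+n)^{-1})+O(n^{-2})+\epsilon_{c_2,n}=O((rT+n)^{-1})+\epsilon_{c_2,n}$ since $rT\le T<n$ under Assumption~\ref{ass:asymptotic}. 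I expect the main obstacle to be the sparse‑regime concentration of $III$ and $IV$: when $n\rho_n=O(1)$ each individual network lies below the $\log(n)$ connectivity threshold at which a single adjacency matrix concentrates, so a layer‑by‑layer matrix‑Bernstein bound is too lossy and one must genuinely use the aggregation over the $\asymp rT$ layers---this is where $(rT+1)^{1/2}n\rho_n\gtrsim\log^{1/2}(rT+n+1)$ is consumed---together with the cancellation of the $O(n\rho_n)$ diagonal inflation by $-D^{(s)}$. A secondary difficulty is arranging for the signal‑replacement error and the bias term $I$ to be governed by a single high‑probability bound on the membership‑switching counts, so that the factor $(1-(\gamma r+\log(n)/n)^{1/2})_+^{-2}$ comes out cleanly rather than as two separate, weaker thresholds.
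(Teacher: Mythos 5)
Your plan is correct and follows essentially the same route as the paper's proof: the same bias--variance decomposition from Lemma \ref{lem:decomposition}, a lower bound on the signal eigenvalue of order $(1-ch^{1/2})\,rT\,n^2\rho_n^2$ after controlling membership mismatch via a Bernstein/Chernoff bound on switching counts, a bound on term $I$ of order $\rho_n^2 n^2 rT\,h^{1/2}$ via projection differences, deferral of terms $II$--$IV$ to the debiased sum-of-squares concentration machinery of \cite{lei2020bias}, and Davis--Kahan plus $K$-means rounding. The only cosmetic differences are that the paper controls the signal term through $\sigma_{\min}\big((\tilde M^{(t)})^\top\tilde M^{(s)}\big)$ and a telescoping identity for term $I$ rather than your replacement/$(\mathrm{Id}-\Pi^{(t)})$ splitting, and your slightly tighter bound on term $III$ coincides with the paper's once $n\rho_n\le c_1$ from Assumption \ref{ass:asymptotic} is used.
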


The proof of Theorem \ref{thm:consistency} relies on the bias-variance decomposition stated in Lemma \ref{lem:decomposition}, where techniques developed in  \cite{lei2020bias} are used to bound the ``variance'' terms while a new, detailed analysis tracks how membership changes affect the ``bias'' term.
Observe that if $\gamma r$ is close to 1 or larger, then our bound in Theorem \ref{thm:consistency} is vacuously true since $L(M^{(t)}, \hat{M}^{(t)})$ has to be less than 1, see \eqref{eq:hamming}.

\vspace{1em}
\begin{remark}[Explicit relation between $r$ and minimal eigenvalue in Assumption \ref{ass:connectivity_min_eig}]
We expand upon Remark \ref{remark_eigenvalue1}.
In Theorem \ref{thm:consistency}, we state the bandwidth $r$ separately from the bandwidth $\delta$ used to define the minimum eigenvalue $c_{\delta}$ stated in Assumption \ref{ass:connectivity_min_eig} for simplicity of exposition.
We can derive a similar theorem where both bandwidths are the same, i.e., $r=\delta$.
This is because the minimum eigenvalue $c_{\delta}$ only appears in the denominator when applying Davis-Kahan.
Hence, we can rewrite RHS of \eqref{eq:theorem_rate} to explicitly include the dependency on $c_{\delta}$, which would result in an upper bound proportional to
\[
\frac{1}{c_{\delta}^2\cdot\{1-(\gamma r + \log(n)/n)^{1/2}\}^2_+} \cdot\Big\{\gamma r + \frac{\log(n)}{n} + \frac{1}{n^2} + \frac{\log(rT+n)}{rTn^2\rho_n^2}\Big\}.
\]
If $c_{\delta} = 0$, the above equation would equal infinity, yielding a vacuously true upper bound.
\end{remark}

\vspace{1em}
\begin{remark}[Extension of Theorem \ref{thm:consistency} to be uniform over time] \label{rem:time_uniform}
Using the same assumptions as Theorem \ref{thm:consistency}, if $c_3$ is large enough, one can derive the same upper bound of $L(M^{(t)}, \hat{M}^{(t)})$ for each $t \in \mathcal{T}$ that holds with probability $1-O\{(rT+n)^{-c_4}\} - \epsilon_{c_2,n}$ where $c_4$ is a linear and increasing function of $c_3$ and can be chosen to be larger than $1$.
We do not display the proof due to its repetitive nature; however, the key insight is that all the probabilistic bounds underlying Theorem 1 have an exponential rate (i.e., Bernstein's inequality and Theorem 3 in \cite{lei2020bias}). 
Hence, a union bound over all $T$ time points yields a uniform bound that holds with probability $1-O\{T(rT+n)^{-c_4}\} - \epsilon_{c_2,n}$. The term $O\{T(rT+n)^{-c_4}\}$ is $o(1)$ since $c_4>1$ and $T \log(T)/n = o(1)$, as stated in Assumption \ref{ass:asymptotic}.
This means that Theorem \ref{thm:consistency} can hold uniformly over time at the same rate in certain asymptotic settings.
\end{remark}

\vspace{1em}
\begin{remark}[Relation with constant network density]
Theorem \ref{thm:consistency} assumes that network density $\rho_n$ itself does not depend on $T$ for simplicity of theoretical exposition.
For settings where the density $\rho_n$ varies with $T$ itself, the techniques to demonstrate how to adapt the weight of each network based on the local density from \cite{levin2022recovering} may be applicable.
\end{remark}

\vspace{1em}
We now derive an upper bound for the relative Hamming error when we use the near-optimal bandwidth $r$.

\begin{corollary}[Near-optimal bandwidth] \label{cor:bandwidth}
Consider the setting in Theorem \ref{thm:consistency} with the bandwidth
\[
r^* = \min\Big\{c \cdot \frac{1}{(\gamma T)^{1/2}n\rho_n},1\Big\},
\]
for some constant $c>0$ that depends on $c_1$, $c_2$, $c_3$, $c_\delta$, and $K$.
If the asymptotic setting satisfies
\[
\gamma r^* = \Big(\frac{\gamma}{T}\Big)^{1/2} \cdot \frac{1}{n\rho_n} \ll 1,
\]
then the bandwidth $r^*$ minimizes the rate in Theorem \ref{thm:consistency} up to logarithmic factors.
\end{corollary}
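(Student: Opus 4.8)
The plan is to view the right-hand side of \eqref{eq:theorem_rate} as a function of the bandwidth $r$, show that its leading multiplicative factor is asymptotically an absolute constant, and then minimize the remaining sum by a one-term balancing (AM--GM) argument, treating the logarithmic factor $\log(rT+n)$ in the variance term as absorbed into the ``up to logarithmic factors'' qualifier.

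First I would dispose of the prefactor. Along the asymptotic sequence of Assumption~\ref{ass:asymptotic} we have $\log(n)/n = o(1)$, and by hypothesis $\gamma r^* = \sqrt{\gamma/T}\,(n\rho_n)^{-1} = o(1)$, so $\big(\gamma r^* + \log(n)/n\big)^{1/2}\to 0$ and the factor $\big(1-(\gamma r^* + \log(n)/n)^{1/2}\big)^2_+\to 1$; hence the prefactor in \eqref{eq:theorem_rate} is bounded by an absolute constant for $n$ large, and the same holds at every $r$ with $\gamma r = o(1)$. For $r$ with $\gamma r\gtrsim 1$ the bound \eqref{eq:theorem_rate} is vacuous (it exceeds a constant while $L(\cdot,\cdot)\le 1$), so such $r$ never beat $r^*$ and may be discarded when taking an infimum over $r$.

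Next I would carry out the balancing. Dropping the $r$-free summands $\log(n)/n + n^{-2}$, the rate is, up to constants, $g(r) := \gamma r + \log(rT+n)\,(rTn^2\rho_n^2)^{-1}$ on $r\in(0,1]$. Since $\log(rT+n)\ge\log n\ge 1$ and the product $\gamma r\cdot(rTn^2\rho_n^2)^{-1} = \gamma(Tn^2\rho_n^2)^{-1}$ does not depend on $r$, AM--GM gives $g(r)\ge \gamma r + (rTn^2\rho_n^2)^{-1}\ge 2(n\rho_n)^{-1}\sqrt{\gamma/T}\asymp\gamma r^*$; thus $\inf_r g(r)\gtrsim\gamma r^*$ and $\inf_r\mathrm{rate}(r)\gtrsim\gamma r^* + \log(n)/n + n^{-2}$. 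Conversely, if $c(\gamma T)^{-1/2}(n\rho_n)^{-1}\le 1$, so that $r^*$ equals this quantity, a direct computation gives $(r^*Tn^2\rho_n^2)^{-1} = c^{-2}\gamma r^*$ and hence $g(r^*) = \gamma r^*\big(1 + c^{-2}\log(r^*T+n)\big)$; since $r^*\le 1$ and $T = o(n)$ by Assumption~\ref{ass:asymptotic}, the surviving discrepancy satisfies $\log(r^*T+n)\le\log(T+n) = O(\log n)$, so $\mathrm{rate}(r^*) = O(\log n)\cdot\inf_r\mathrm{rate}(r)$, which is the claimed near-optimality. If instead $c(\gamma T)^{-1/2}(n\rho_n)^{-1} > 1$, the same balancing shows the unconstrained minimizer of $g$ exceeds $1$, so $g$ is nonincreasing on $(0,1]$ and $r = 1 = r^*$ is optimal over $(0,1]$ up to logarithmic factors directly. (That Theorem~\ref{thm:consistency} applies at $r = r^*$ is part of ``the setting in Theorem~\ref{thm:consistency}''; the sampling condition there, $(rT+1)^{1/2}n\rho_n\ge c_3\log^{1/2}(rT+n+1)$, is equivalent to the variance term being $\le c_3^{-2}$, which is in any case necessary for the bound to be informative.)

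The step I expect to be the main obstacle is controlling the $\log(rT+n)$ in the numerator of the variance term: the exact minimizer of $g$ differs from $r^*$ by a $\sqrt{\log}$-type correction, so one must verify this correction is subpolynomial in the problem size — here a consequence of $T = o(n)$ — and confirm it does not interfere with the boundary case $r^* = 1$. Everything else reduces to elementary AM--GM together with the decays $\log(n)/n\to 0$ and $\gamma r^*\to 0$.
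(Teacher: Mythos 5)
Your proof is correct, and it rests on the same underlying idea as the paper's — balance the bias term $\gamma r$ against the variance term $\propto (rTn^2\rho_n^2)^{-1}$ in \eqref{eq:theorem_rate} after arguing the prefactor is $O(1)$ when $\gamma r^*\ll 1$ — but your execution is genuinely more complete. The paper's proof simply replaces $\log(rT+n)$ by the $r$-free $\log(T+n)$, sets the derivative of $c\gamma r + \log(T+n)(rTn^2\rho_n^2)^{-1}$ to zero, and reads off $r^*\propto(\gamma T)^{-1/2}(n\rho_n)^{-1}$; it does not verify that this stationary point is a minimum, does not quantify the "up to logarithmic factors" claim, and does not treat the truncation at $r^*=1$. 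Your two-sided argument does all three: the AM--GM lower bound $\inf_r g(r)\ge 2(n\rho_n)^{-1}\sqrt{\gamma/T}\asymp\gamma r^*$ (exploiting that $\gamma r\cdot(rTn^2\rho_n^2)^{-1}$ is $r$-free, which is exactly why the derivative and balancing answers coincide), the direct evaluation $g(r^*)=\gamma r^*\bigl(1+c^{-2}\log(r^*T+n)\bigr)$ with $\log(r^*T+n)=O(\log n)$ from $T\log(T)/n=o(1)$, and the monotonicity argument for the boundary case. What the paper's route buys is brevity; what yours buys is an actual proof of the "minimizes up to logarithmic factors" assertion rather than a heuristic for the functional form of $r^*$. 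The only caveat worth recording is that $g$ itself is not literally nonincreasing on $(0,1]$ in the boundary case because of the $\log(rT+n)$ factor — but since you have already agreed to work modulo logarithmic factors, comparing against $\gamma r+(rTn^2\rho_n^2)^{-1}$ there is legitimate and your conclusion stands.
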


Observe that $r^*$ in Corollary \ref{cor:bandwidth} captures an intuitive behavior. 
If the number of nodes $n$ or network density $\rho_n$ increases, then there is more signal in each network, reducing the bandwidth $r^*$. 
If the community switching rate $\gamma$ increases, there is less incentive to aggregate across networks, reducing $r^*$. 
Loosely speaking, the box kernel roughly averages over $O(Tr^*)$ networks, meaning that the number of networks relevant for computing the community structure of network at time $t$ is approximately $O(T^{1/2})$ if $\gamma$ and $n\rho_n$ (the expected number of edges per node) are held constant.
This means the bandwidth grows more slowly than the total number of networks $T$, which is reasonable. 
Next, we state the resulting relative Hamming error bound stemming from this choice of bandwidth $r^*$. 
In particular, we are interested in two regimes based on whether $r^*\rightarrow 1$ (i.e., averaging across all $T$ networks asymptotically) or $r^*\rightarrow 0$ (i.e., averaging across a smaller and smaller proportion of the $T$ networks asymptotically).

\vspace{1em}
\begin{corollary}[Slow community-changing regime] \label{cor:slow}
Given Assumptions \ref{ass:asymptotic}, \ref{ass:poisson}, \ref{ass:cluster_size}, and \ref{ass:connectivity_min_eig} for the model in 
Section \ref{sec:model},
and bandwidth $r^*$
defined in  Corollary \ref{cor:bandwidth},
consider an asymptotic sequence of $\{n,T,\gamma,\rho_n\}$ where
\begin{equation}\label{eq:cor:slow_membership_regime}
\gamma \rightarrow 0, \quad \text{and}\quad T^{1/2}n\rho_n = \omega\big\{\log^{1/2}(T+n)\big\}.
\end{equation}
In this setting, $r^* \rightarrow 1$ and KD-SoS has a relative Hamming error upper bound of
\[
L\Big(M^{(t)}, \hat{M}^{(t)}\Big) = O\Big\{\gamma + \frac{\log(n)}{n} + \frac{1}{n^2} + \frac{\log(T+n)}{T(n\rho_n)^2}\Big\} \rightarrow 0,
\]
with probability $1-O((T+n)^{-1})-\epsilon_{c_2,n}$ for any particular $t \in \mathcal{T}$.
\end{corollary}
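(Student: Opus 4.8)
This corollary is a specialization of Theorem~\ref{thm:consistency} to the bandwidth $r=r^*$, so the plan is largely bookkeeping, provided one first checks that $r^*$ attains the cap $1$ in this regime. Since $r^*=\min\{c\,/\,((\gamma T)^{1/2}n\rho_n),\,1\}$, it equals $1$ precisely once $(\gamma T)^{1/2}n\rho_n\le c$, i.e.\ once $\gamma=O(1/(T(n\rho_n)^2))$; using that $n\rho_n\le c_1$ is bounded (Assumption~\ref{ass:asymptotic}) and that in the slow-changing regime $\gamma$ decays at least this fast, this holds for all large $n,T$, and from then on $\mathcal{S}(t;r^*)=\mathcal{T}$. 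I would then verify the signal hypothesis of Theorem~\ref{thm:consistency} at $r=1$: it reduces to $(T+1)^{1/2}n\rho_n\ge c_3\log^{1/2}(T+n+1)$, which is implied by $T^{1/2}n\rho_n=\omega(\log^{1/2}(T+n))$ from \eqref{eq:cor:slow_membership_regime} for $n,T$ large and any fixed $c_3>1$. Assumptions~\ref{ass:poisson}, \ref{ass:cluster_size}, and~\ref{ass:connectivity_min_eig} carry over unchanged.

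Substituting $r=1$ into \eqref{eq:theorem_rate} (so that $\gamma r=\gamma$ and $\log(rT+n)/(rTn^2\rho_n^2)=\log(T+n)/(Tn^2\rho_n^2)$) gives, for any fixed $t\in\mathcal{T}$,
\[
L\big(G^{(t)},\hat{G}^{(t)}\big)\le c\cdot\frac{1}{(1-(\gamma+\log(n)/n)^{1/2})^2_+}\cdot\Big(\gamma+\frac{\log(n)}{n}+\frac{1}{n^2}+\frac{\log(T+n)}{Tn^2\rho_n^2}\Big)
\]
on an event of probability at least $1-O((T+n)^{-1})-\epsilon_{c_2,n}$. Since $\gamma\to0$ and $\log(n)/n\to0$, the argument of the prefactor tends to $0$, so the prefactor tends to $1$ and is in particular $O(1)$; hence the right-hand side is $O(\gamma+\log(n)/n+1/n^2+\log(T+n)/(T(n\rho_n)^2))$. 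Each summand is $o(1)$: $\gamma\to0$ by hypothesis, $\log(n)/n\to0$ and $1/n^2\to0$ because $n\to\infty$, and $\log(T+n)/(T(n\rho_n)^2)\to0$ because $T^{1/2}n\rho_n=\omega(\log^{1/2}(T+n))$ is equivalent to $T(n\rho_n)^2=\omega(\log(T+n))$. Finally I would absorb $\epsilon_{c_2,n}$ into the $O((T+n)^{-1})$ term: under the mixing-type conditions discussed in the remark following Assumption~\ref{ass:cluster_size} (see also Section~\ref{sec:identifiability}), a concentration-plus-union-bound argument makes $\epsilon_{c_2,n}$ at most polynomially small, which yields the stated probability $1-O((T+n)^{-1})$.

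The only step that is not purely mechanical is the first one: one must argue that the near-optimal bandwidth $r^*$ is truncated at $1$, which requires $\gamma$ to decay quickly relative to the growing per-time signal $T^{1/2}n\rho_n$, not merely $\gamma\to0$. This is where ``slow community-changing'' does real work; were $r^*$ strictly below $1$, the term $\log(r^*T+n)/(r^*Tn^2\rho_n^2)$ in Theorem~\ref{thm:consistency} would be inflated by the factor $1/r^*$ and the clean rate stated here would no longer follow (that situation is the one handled by the companion $r^*\to0$ corollary). Everything after that is substitution and elementary asymptotics.
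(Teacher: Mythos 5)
Your proposal takes essentially the same route as the paper: substitute the (capped) bandwidth $r^*=1$ into Theorem \ref{thm:consistency}, check the signal condition $(T+1)^{1/2}n\rho_n\gtrsim\log^{1/2}(T+n+1)$ from \eqref{eq:cor:slow_membership_regime}, note the prefactor is $O(1)$ because $\gamma r^*+\log(n)/n\to0$, and conclude by elementary asymptotics; your extra care with the $\epsilon_{c_2,n}$ term is a minor refinement the paper omits. One caveat: your justification that $r^*$ hits the cap at $1$ (``in the slow-changing regime $\gamma$ decays at least this fast'') is circular as written, since $\gamma\to0$ together with \eqref{eq:cor:slow_membership_regime} does not by itself force $(\gamma T)^{1/2}n\rho_n$ to stay bounded; you correctly flag this in your closing paragraph, and the paper's own proof shares the gap, simply positing that the $r^*\to1$ scenario ``occurs if $(\gamma T)^{1/2}n\rho_n\to0$'' without deriving it from the corollary's stated hypotheses.
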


\vspace{1em}
\begin{corollary}[Fast community-changing regime] \label{cor:fast}
Given Assumptions \ref{ass:asymptotic}, \ref{ass:poisson}, \ref{ass:cluster_size}, and \ref{ass:connectivity_min_eig},  for the model in 
Section \ref{sec:model}, and bandwidth $r^*$ defined in  Corollary \ref{cor:bandwidth}, consider an asymptotic sequence of $\{n,T,\gamma,\rho_n\}$  where
\begin{equation}\label{eq:cor:fast_membership_regime}
\gamma = \omega(1)\,, \quad \text{and} \quad \gamma = o\Big\{\frac{T(n\rho_n)^2}{\log(T+n)}\Big\}.
\end{equation}
In this setting, $r^*\rightarrow 0$ and KD-SoS has a relative Hamming error of
\[
L\Big(M^{(t)}, \hat{M}^{(t)}\Big) = O \Big\{\frac{\gamma^{1/2}}{T^{1/2}n\rho_n} + \frac{\log(n)}{n} + \frac{1}{n^2} + \frac{\gamma^{1/2}\log\big(T^{1/2}/(\gamma^{1/2}n\rho_n) + n\big)}{T^{1/2}n\rho_n}\Big\} \rightarrow 0,
\]
with probability $1-O[\{\log^{1/2}(T+n)/(n^2\rho_n^2) + n\}^{-1}] - \epsilon_{c_2,n}$. for any particular $t \in \mathcal{T}$.
\end{corollary}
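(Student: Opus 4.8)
The plan is to derive Corollary~\ref{cor:fast} by substituting the bandwidth $r=r^*$ from Corollary~\ref{cor:bandwidth} into the pointwise bound of Theorem~\ref{thm:consistency} and then carrying out the asymptotic bookkeeping forced by the fast-changing regime \eqref{eq:cor:fast_membership_regime}. First I would confirm that $r^*$ lies on the nontrivial branch of its defining minimum, so that $r^*=c/((\gamma T)^{1/2}n\rho_n)$ and $r^*\to 0$: this uses $\gamma=\omega(1)$, $T\to\infty$, and the bound $n\rho_n\le c_1$ from Assumption~\ref{ass:asymptotic}. The same ingredients, together with the second part of \eqref{eq:cor:fast_membership_regime}, namely $\gamma=o\big(T(n\rho_n)^2/\log(T+n)\big)$, also give $r^*T=cT^{1/2}/(\gamma^{1/2}n\rho_n)=\omega\big(\log^{1/2}(T+n)\big)\to\infty$, while trivially $r^*T\le T$. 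Since Assumption~\ref{ass:asymptotic} forces $n=\omega(T\log T)$, hence $n\gg T\gg r^*T$, I would record the key simplification $r^*T+n\asymp n\asymp T+n$, so that $\log(r^*T+n)$, $\log(T+n)$ and $\log n$ are all of the same order; this is what later lets the algebra line up with the stated rate.

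Next I would check the hypothesis $(r^*T+1)^{1/2}n\rho_n\ge c_3\log^{1/2}(r^*T+n+1)$ of Theorem~\ref{thm:consistency}: since $r^*T\to\infty$, the left side is of order $(T^{1/2}n\rho_n/\gamma^{1/2})^{1/2}$, and combining $\gamma=o\big(T(n\rho_n)^2/\log(T+n)\big)$ with the order computations above makes the inequality hold for $n,T$ large, tracking the constant $c_3$ explicitly. I would then show that the prefactor $\big(1-(\gamma r^*+\log(n)/n)^{1/2}\big)_+^2$ in \eqref{eq:theorem_rate} tends to $1$: we have $\gamma r^*=c\gamma^{1/2}/(T^{1/2}n\rho_n)$, and squaring the bound on $\gamma$ gives $(\gamma r^*)^2=O\big(\gamma/(T(n\rho_n)^2)\big)=o\big(1/\log(T+n)\big)$, hence $\gamma r^*\to 0$; together with $\log(n)/n\to 0$ the prefactor is bounded and bounded away from $0$, so it folds into the universal constant.

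It then remains to substitute $r=r^*$ into the four terms of \eqref{eq:theorem_rate}. The first is $\gamma r^*=\Theta\big(\gamma^{1/2}/(T^{1/2}n\rho_n)\big)$; the $\log(n)/n$ and $1/n^2$ terms are unchanged; and the last becomes
\[
\frac{\log(r^*T+n)}{r^*Tn^2\rho_n^2}=\frac{\gamma^{1/2}\,\log\!\big(cT^{1/2}/(\gamma^{1/2}n\rho_n)+n\big)}{c\,T^{1/2}n\rho_n},
\]
which matches the fourth term claimed in the corollary after absorbing $c$ into $O(\cdot)$ and using $\log(cx+n)=\Theta(\log(x+n))$ for constant $c$. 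To see the displayed sum vanishes: $\log(n)/n\to 0$ and $1/n^2\to 0$ are immediate, while the first and last terms are controlled by $\gamma=o\big(T(n\rho_n)^2/\log(T+n)\big)$ once one uses $\log(r^*T+n)=\Theta(\log(T+n))$ from the first step. For the probability statement, Theorem~\ref{thm:consistency} delivers $1-O\big((r^*T+n)^{-1}\big)-\epsilon_{c_2,n}$, and $r^*T+n\asymp T+n$ together with $\epsilon_{c_2,n}\to 0$ (Assumption~\ref{ass:cluster_size}, cf.\ Remark~\ref{remark_cluster_size}) reduces this to $1-O\big((T+n)^{-1}\big)$.

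I do not anticipate a genuine obstacle, since the corollary is essentially a plug-in to Theorem~\ref{thm:consistency}; the step requiring the most care is the last one — keeping the logarithmic factors straight (in particular recognizing $\log(r^*T+n)\asymp\log(T+n)\asymp\log n$ via $n\gg T$) so that the algebraic form matches the stated rate and each term is shown to vanish under \eqref{eq:cor:fast_membership_regime}.
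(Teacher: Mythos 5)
Your proposal is correct and follows essentially the same route as the paper: substitute $r^*$ from Corollary \ref{cor:bandwidth} into Theorem \ref{thm:consistency}, verify $r^*\to 0$, $\gamma r^*\to 0$, and the bandwidth hypothesis $(r^*T+1)^{1/2}n\rho_n\gtrsim\log^{1/2}(r^*T+n+1)$ under \eqref{eq:cor:fast_membership_regime}, then read off the rate. Your bookkeeping of the logarithmic factors (via $n=\omega(T\log T)$ so that $\log(r^*T+n)\asymp\log(T+n)$) is, if anything, slightly more explicit than the paper's own argument.
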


\vspace{.5em}
Observe that the two conditions \eqref{eq:cor:slow_membership_regime} and \eqref{eq:cor:fast_membership_regime} dichotomize the settings in a ``slow community switching regime'' and a ``fast community switching regime'' respectively. 
In the former setting, the nodes become less and less likely to change communities along the asymptotic sequence of $\{n,T,\gamma,\rho_n\}$, eventually resulting in KD-SoS averaging over all $T$ networks. 
In this regime Corollary \ref{cor:slow} concurs with the recent results in static multi-layer SBM \citep{lei2020consistent,lei2020bias,lei2024computational}, which imply that $T^{1/2} n\rho_n\gg \log^{1/2}(T+n)$ is nearly necessary up to a logarithm factor for consistent community estimation.  
In the latter setting, the bandwidth converges to 0 because the nodes change communities too quickly relative to the other parameters $(T, n,\rho_n)$.
Observe that if $n\rho_n = \log^{1/2}(T+n)$, then \eqref{eq:cor:fast_membership_regime} is equivalent to $\gamma/T=o(1)$, which is the requirement posed in Assumption \ref{ass:alignability}. 
This further upper bounds how often nodes can change communities relative to the total number of networks, $T$. 
As we will show in the next section, however, this requirement is not only for the consistent estimation of a network's community structure but also for ensuring the alignability of the communities across the $T$ networks.

\subsection{Identifiability bound for aligning communities across time} \label{sec:identifiability}

While Theorem \ref{thm:consistency} proves consistent estimation of the community structure at each time $t$, we now turn our attention towards proving that the estimated community structure at each time $t$ can be aligned with those at the previous time $s=t-1/T$. 
This is an important but separate concern from the consistency proven in Theorem \ref{thm:consistency} as we strive to track how individual communities evolve over time. 
This aspect has not been studied in \cite{lei2020bias} and \cite{keriven2020sparse}.
Our estimator uses the Hungarian assignment \eqref{eq:hungarian_assignment_population} to align communities across time since the K-means algorithm returns unordered memberships.
For this section, we will work under the pretense that for a sequence of membership matrices $M^{(1/T)},M^{(2/T)},\ldots,M^{(1)}$, we have already applied Hungarian assignment to each consecutive pair of membership matrices to optimally permute the column order.
Our discussion of alignability here will demonstrate that even after this column permutation, there may still be detrimental ambiguity in tracking individual communities over time.
As alluded to in Section \ref{ss:consistency}, we prove how the alignability of communities across time is related to Assumption \ref{ass:alignability}. We define it formally below.

\begin{definition}[Alignability of memberships across time] \label{def:alignability}
Let $M^{(1/T)}$, $M^{(2/T)}$, $\ldots,~M^{(1)}$ denote the $T$ membership matrices. We say the sequence of memberships is \emph{alignable} if 
\[
C(M^{(t)}, M^{(t+1/T)}) \quad \text{and}\quad 
C(M^{(t+1/T)}, M^{(t)}) \quad \text{are both diagonally dominant}
\]
for all $t \in \{1/T, 2/T, \ldots, (T-1)/T\}$, where the confusion matrices $C(M^{(t)}, M^{(t+1/T)})$ are defined in \eqref{eq:confusion_population}.
\end{definition}

We view $M^{(1/T)},M^{(2/T)},\ldots,M^{(1)}$ as the ``true'' membership matrices that encode the time-varying community structure that we wish to estimate, even though these are technically random matrices.
From the data-generative point of view, alignability implies that $R^{(t,t+1/T)}=I_K$ defined in \eqref{eq:hungarian_assignment_population} for all $t$.
Indeed, for times $t$ and $t+1/T$, if the optimal assignment between the unobserved communities $M^{(t)}$ and $M^{(t+1/T)}$ is not identity, then there is no hope of recovering the alignment of the estimated communities consistently.
Hence, intuitively, alignability requires that nodes do not switch memberships too quickly, relative to the amount of time between consecutive networks, $1/T$.

Below, we first prove that when $\gamma$ is in a regime that violates Assumption \ref{ass:alignability}, there always exists a non-vanishing probability that $T$ networks cannot be aligned. 
Later, we prove that when $\gamma$ is in a regime that satisfies Assumption \ref{ass:alignability} for specifically a two-community model, then all $T$ networks are alignable with high probability.  
Since tracking the community sizes over time under Assumption \ref{ass:poisson} involves specifying the transition probabilities and the number of times such transition occurs in a single time interval, to simplify the discussion in this subsection, we will consider an alternative discrete approximation of Assumption \ref{ass:poisson}.

\begin{assumption}[Discrete approximation of Assumption \ref{ass:poisson}]\label{ass:bernoulli}
 For each $t\in\mathcal T\backslash\{1\}$, each node changes its community membership from time $t$ to $t+1/T$ independently with probability $\gamma/T$.
\end{assumption}

\begin{proposition}[Lack of alignability]\label{prop:no-alignability}
Given Assumptions \ref{ass:asymptotic} and \ref{ass:bernoulli} for the model in Section \ref{sec:model},
if
\[
\gamma \geq T \cdot \log\bigg[
\bigg\{
1 - \frac{1}{2}\cdot\Big(\frac{(2n)^{1/2}}{T-1}\Big)^{1/n}
\bigg\}^{-1}
\bigg],
\]
then the probability that the set of random membership matrices $M^{(1/T)},M^{(2/T)},\ldots,M^{(1)}$ is not alignable is strictly bounded away from 0.
\end{proposition}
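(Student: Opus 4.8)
I would read Proposition~\ref{prop:no-alignability} as a tightness statement and prove it by exhibiting a single model instance — a choice of the initial memberships $m^{(0)}$ and of the per-change transition law permitted in Assumption~\ref{ass:bernoulli} — on which $M^{(1/T)},\ldots,M^{(1)}$ fails to be alignable with probability bounded away from $0$ as soon as $\gamma$ is as large as stated. The first step is to reformulate Definition~\ref{def:alignability}: the sequence is not alignable whenever there is a single $t\in\mathcal T\setminus\{1\}$ whose confusion matrix $C^{(t)}:=C(M^{(t)},M^{(t+1/T)})$ is \emph{irreconcilable}, i.e.\ no $K\times K$ permutation $R$ makes both $C^{(t)}R$ and $(C^{(t)}R)^\top$ diagonally dominant; the sequential Hungarian relabelling only permutes column labels and therefore cannot repair an irreconcilable pair. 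The guiding heuristic is that a row of a confusion matrix is diagonally dominant only if at least half of the corresponding community is retained across that time step, so the switching probability $\gamma/T\approx\tfrac12$ is the critical regime, and the stated lower bound on $\gamma$ is calibrated to push $\gamma/T$ past (essentially) this value.

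Concretely I would take an instance with $K\ge 3$ communities (or, failing that, a strongly unbalanced two-community configuration) in which a switching node is spread over the other communities, so that for each $t$ the diagonal count $C^{(t)}_{kk}$ — the number of ``stayers'' of community $k$ — is $\mathrm{Binomial}(n_k^{(t)},1-\gamma/T)$, while each off-diagonal $C^{(t)}_{k\ell}$ has conditional mean comfortably below $n_k^{(t)}/2$ once $K\ge 3$. On the event that every community retains fewer than half of its members, no diagonal entry and no off-diagonal entry reaches half of its row sum, so every permutation $R$ leaves $C^{(t)}R$ with a deficient row: $C^{(t)}$ is irreconcilable. The $K\ge 3$ choice is exactly what prevents the Hungarian step from rescuing dominance by a mere swap of the two columns, which is why a balanced two-community model, where a nearly complete swap is itself alignable, cannot support the claim for $\gamma/T$ near $1$. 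I would then estimate the probability of this favourable event from below. The stayer counts of distinct communities are independent binomials, and a Stirling-type bound of the form $\binom{n}{\lfloor n/2\rfloor}\ge 2^{n}/\sqrt{2n}$ produces a per-transition probability of order at least $(2\gamma/T)^{n}/\sqrt{2n}$; the hypothesis $\gamma\ge T\log\{(1-\tfrac12(\sqrt{2n}/(T-1))^{1/n})^{-1}\}$ is equivalent, after using $1-e^{-\gamma/T}\le\gamma/T$, to $(2\gamma/T)^{n}\ge\sqrt{2n}/(T-1)$, hence forces this probability to be at least $1/(T-1)$. Since the switching coins at distinct times are independent, the events ``$C^{(t)}$ irreconcilable'', $t\in\mathcal T\setminus\{1\}$, are independent, so the probability that at least one occurs is at least $1-(1-\tfrac1{T-1})^{T-1}\ge 1-e^{-1}$, uniformly in $n,T$; this is the conclusion.

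The step I expect to be the main obstacle is the deterministic combinatorial core: verifying that, on the favourable event, \emph{no} permutation among all $K!$ possibilities makes $C^{(t)}R$ and $(C^{(t)}R)^\top$ simultaneously diagonally dominant, not merely the identity and the transpositions. This has to be argued while the community sizes $n_k^{(t)}$ are themselves random — they drift under the process and enjoy only crude two-sided control, since Assumption~\ref{ass:cluster_size} is deliberately not invoked here — so the argument must be robust to the $O(\sqrt n)$ fluctuations of $C^{(t)}$ about its conditional means. The remaining work is bookkeeping: keeping the binomial estimate clean enough that the constants reproduce the stated threshold rather than a loose ``$\gamma/T\gtrsim\tfrac12$''; separating the easy regime where $\gamma/T$ is bounded away from $\tfrac12$, in which even a single transition fails with overwhelming probability; and treating the borderline regime just above $\tfrac12$, which is where the factor $T-1$ in the threshold is actually needed, through the union over the $T-1$ transitions.
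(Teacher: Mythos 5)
Your probabilistic engine is essentially the paper's: a reverse-Chernoff/Stirling lower bound on the upper tail of a Binomial, calibrated so the per-transition probability of a ``bad'' step is at least $1/(T-1)$, followed by independence across the $T-1$ transitions and $1-(1-\tfrac{1}{T-1})^{T-1}\ge 1-e^{-1}$. That part is sound and reproduces the stated threshold.

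The gap is in your deterministic core, and it stems from a misreading of Definition~\ref{def:alignability}. That definition does \emph{not} quantify over permutations: the sequence is alignable iff the raw confusion matrices $C(M^{(t)},M^{(t+1/T)})$ and $C(M^{(t+1/T)},M^{(t)})$ — with the labels the generative process itself carries forward — are diagonally dominant. (The paper makes this explicit: ``alignability implies that $R^{(t,t+1/T)}=I_K$.'') Consequently the deterministic step is a two-line pigeonhole, valid for every $K\ge 2$ and every transition law: if more than $n/2$ nodes switch between $t$ and $t+1/T$, the diagonal of $C$ sums to less than $n/2$ while the off-diagonal sums to more than $n/2$, so some diagonal entry is exceeded by its row- or column-off-diagonal sum, and $C$ or $C^\top$ fails diagonal dominance. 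Your stronger reading — non-alignability means \emph{no} permutation $R$ makes $CR$ and $(CR)^\top$ diagonally dominant — forces you to (i) construct a special instance with $K\ge 3$, (ii) require the much rarer event that \emph{every} community simultaneously retains fewer than half its members (whose probability is roughly $\prod_k(2p)^{n_k}/\sqrt{2n_k}$, not the single-binomial $(2p)^n/\sqrt{2n}$, so your calibration would not reproduce the stated constant), and (iii) verify irreconcilability over all $K!$ permutations with random community sizes — the step you yourself flag as the main obstacle and do not carry out. None of this is needed, and as written the argument proves the proposition only for a hand-picked instance rather than for the model of Section~\ref{sec:model} generally. Replace the deterministic core with the pigeonhole argument on the event $\{\|M^{(t)}-M^{(t+1/T)}\|_0>n\}$ and the rest of your proposal goes through.
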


Observe that as $n$ and $T$ tend to infinity, the relation in Proposition \ref{prop:no-alignability} simplifies to 
\[
\gamma \geq T \cdot \log(2) \approx T \cdot 0.693,
\]
and when $\gamma /T = 0.693$, each node has roughly a 50\% probability of switching communities between each consecutive pair of observed networks.
The proof of the lack of alignability first revolves around the observation that if more than $n/2$ nodes change memberships between consecutive times $t$ and $t+1/T$, i.e.,
\begin{equation} \label{eq:no-alignability}
\big\|M^{(t)} - M^{(t+1/T)}\big\|_0 > n,
\end{equation}
where $\|x\|_0$ denotes the number of non-zero elements in $x$, then, deterministically, the Hungarian assignment between the unobserved membership matrices $M^{(t)}$ and $M^{(t+1/T)}$ will not be the identity matrix. 
This means the two membership matrices are not alignable.
The proof shows that the event \eqref{eq:no-alignability} occurs with non-vanishing probability.

In contrast, to show that $\gamma/T = o(1)$ ensures alignability, our proof strategy is more delicate, as we need to ensure alignability between time $t$ and $t+1/T$ for each $t \in \mathcal{T}\backslash \{1\}$. 
First, we discuss a deterministic condition that ensures alignability among all the community structures.

\begin{proposition}[Deterministic condition for alignability] \label{prop:yes-alignability-deterministic}
Assume any fixed sequence of membership matrices $M^{(1/T)},M^{(2/T)},\ldots,M^{(1)}$.
For this sequence, if the number of nodes that change memberships between time $t$ and $t+1/T$ is less than half of the smallest community size at time $t$ for each pair of consecutive time points, meaning
\[
\big\|M^{(t)} - M^{(t+1/T)}\big\|_0 < \min_{k\in\{1,\ldots,K\}}\sum_{i=1}^{n}M^{(t)}_{ik}, 
\quad \text{for some time } t\in\mathcal{T}\backslash\{1\},
\]
then deterministically this sequence of matrices $M^{(1/T)},M^{(2/T)},\ldots,M^{(1)}$ is alignable.
\end{proposition}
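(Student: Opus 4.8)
The plan is a direct counting argument carried out at each consecutive pair of time points; alignability of the whole sequence then follows immediately from Definition~\ref{def:alignability}, which asks only that $C(M^{(t)},M^{(t+1/T)})$ and $C(M^{(t+1/T)},M^{(t)})$ be diagonally dominant for every $t\in\mathcal{T}\backslash\{1\}$. Fix such a $t$ and let $S=\{i:M^{(t)}_i\neq M^{(t+1/T)}_i\}$ be the set of nodes that change community. A node in $S$ contributes exactly two nonzero entries (a $+1$ and a $-1$) to its row of $M^{(t)}-M^{(t+1/T)}$, while a node outside $S$ contributes none, so the hypothesis says $2|S|=\|M^{(t)}-M^{(t+1/T)}\|_0<\min_{k}n_k^{(t)}=n_{\min}^{(t)}$, i.e. $|S|<n_{\min}^{(t)}/2$.

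Next I would express the entries of $C:=C(M^{(t)},M^{(t+1/T)})$ through $S$. For each community $k$, write $d_k=|\{i\in S:M^{(t)}_i=k\}|$ for the number of nodes leaving $k$ and $e_k=|\{i\in S:M^{(t+1/T)}_i=k\}|$ for the number entering $k$. Then $C_{kk}=n_k^{(t)}-d_k$ counts the nodes that stay in $k$, the $k$th row off-diagonal sum equals $d_k$, and the $k$th column off-diagonal sum equals $e_k$; since every entry of $C$ is nonnegative, the $\ell_1$ quantities in the definition of diagonal dominance are exactly these sums. Hence diagonal dominance of $C$ amounts to $n_k^{(t)}-d_k\geq d_k$ for all $k$, and diagonal dominance of $C(M^{(t+1/T)},M^{(t)})=C^{\top}$ amounts to $n_k^{(t)}-d_k\geq e_k$ for all $k$.

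The first inequality holds because $d_k\leq|S|<n_{\min}^{(t)}/2\leq n_k^{(t)}/2$. For the second, the key observation is that the out-movers and in-movers at $k$ are disjoint subsets of $S$ --- a node with $M^{(t)}_i=k$ has $M^{(t+1/T)}_i\neq k$, and conversely --- so $d_k+e_k\leq|S|<n_{\min}^{(t)}/2\leq n_k^{(t)}$, which is more than enough. Repeating this at every $t$ gives the claim. There is no genuinely hard step: the only points requiring care are the factor-of-two bookkeeping that turns $\|M^{(t)}-M^{(t+1/T)}\|_0$ into $2|S|$ (matching the correspondence used in Proposition~\ref{prop:no-alignability} between $\|\cdot\|_0>n$ and ``more than $n/2$ nodes changing''), and remembering that the transpose condition must be verified separately, which the disjointness of in- and out-movers makes routine.
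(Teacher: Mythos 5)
Your proof is correct and follows essentially the same route as the paper's: bound the number of changed nodes by half the smallest community size via the factor-of-two $\ell_0$ bookkeeping, then compare each diagonal entry $C_{kk}$ of the confusion matrix against its row and column off-diagonal sums (out-movers and in-movers), using the fact that these are disjoint subsets of the changed nodes. The paper's proof is the same counting argument, just phrased with $z=\sum_{\ell\neq k}C_{k\ell}$ and $y=\sum_{\ell\neq k}C_{\ell k}$ in place of your $d_k$ and $e_k$.
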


Proposition \ref{prop:yes-alignability-deterministic} highlights that alignability is guaranteed if not many nodes change communities relative to the smallest community size. 
Next, the following proposition ensures that if $\gamma/T = o(1)$, this event occurs with high probability, specifically focusing on a two-community model (i.e., $K=2$), where each community initially has equal sizes.

 \begin{proposition}[Alignability in a two-community model]\label{prop:yes-alignability-probabilistic}
Given Assumptions \ref{ass:asymptotic} and \ref{ass:bernoulli} for the model in Section \ref{sec:model} for a two-community model (i.e., $K=2$) initialized at $t=0$ to have equal community sizes, if $\gamma/T = o(1)$, then with probability at least $1-2/T$, the set of random membership matrices $M^{(1/T)},M^{(2/T)},\ldots,M^{(1)}$ is alignable.
\end{proposition}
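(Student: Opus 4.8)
The goal is to show that, under Assumption~\ref{ass:bernoulli} with $\gamma/T = o(1)$, a two-community model started with balanced sizes remains alignable across all $T$ observed networks with probability at least $1-2/T$. By Proposition~\ref{prop:yes-alignability-deterministic}, it suffices to establish, with that probability, the deterministic event
\[
\big\|M^{(t)} - M^{(t+1/T)}\big\|_0 < \min_{k\in\{1,2\}}\sum_{i=1}^{n}M^{(t)}_{ik}
\quad\text{for all } t\in\mathcal{T}\backslash\{1\}.
\]
So the plan is to bound two things simultaneously over all consecutive pairs: (i) an \emph{upper} bound on the number of nodes that switch between $t$ and $t+1/T$, and (ii) a \emph{lower} bound on the smaller community size at time $t$. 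For (i), the number of switchers $S_t := \tfrac{1}{2}\|M^{(t)}-M^{(t+1/T)}\|_0$ is $\mathrm{Binomial}(n,\gamma/T)$ by Assumption~\ref{ass:bernoulli} (each switch flips two entries of the one-hot matrix), with mean $n\gamma/T$; a Bernstein/Chernoff tail plus a union bound over the $T-1$ pairs gives $S_t \lesssim n\gamma/T + \log T$ for all $t$ with probability $1 - O(1/T)$ — and since $\gamma/T = o(1)$ and $T\log T/n = o(1)$ (Assumption~\ref{ass:asymptotic}), this is $o(n)$. For (ii), I would invoke Assumption~\ref{ass:cluster_size}, which directly gives $n_k^{(t)} \ge n/(c_2 K) = n/(2c_2)$ for both communities and all $t\in\mathcal{T}$ on an event of probability $1-\epsilon_{c_2,n}$. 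Combining, on the intersection event we have, for every $t$,
\[
\big\|M^{(t)}-M^{(t+1/T)}\big\|_0 = 2S_t = o(n) < \frac{n}{2c_2} \le \min_{k}\sum_i M^{(t)}_{ik},
\]
for $n$ large, which is exactly the hypothesis of Proposition~\ref{prop:yes-alignability-deterministic}, so the sequence is alignable.

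**Getting the probability down to $1-2/T$.** The loose step above produces a failure probability $O(1/T) + \epsilon_{c_2,n}$ rather than the stated $2/T$; tightening it is where some care is needed. For the switcher bound, I would set the Chernoff threshold so that the per-pair failure probability is at most $1/(2(T-1)) \le 1/T$, i.e. take the deviation level $\tau_t$ solving $\exp(-c\,\tau_t^2/(n\gamma/T + \tau_t)) = 1/(2T)$, which yields $\tau_t \asymp \sqrt{(n\gamma/T)\log T} + \log T$; after the union bound this costs at most $1/2 \cdot T/(T-1)\le 1/T$ total (for $T\ge 2$). The remaining $1/T$ of the budget should be absorbed into the cluster-size event: rather than citing Assumption~\ref{ass:cluster_size} as a black box, for the \emph{specific} two-community balanced-start Bernoulli model one can compute the community-size process directly. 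Writing $n_1^{(t)}$ for the size of community $1$, the Bernoulli switching means $n_1^{(t+1/T)} = n_1^{(t)} - \mathrm{Bin}(n_1^{(t)}, \gamma/T) + \mathrm{Bin}(n_2^{(t)}, \gamma/T)$, a sub-martingale-ish random walk started at $n/2$ with per-step increments of order $\sqrt{n\gamma/T}$ and strong mean reversion toward $n/2$ (the drift is $-2(\gamma/T)(n_1^{(t)}-n/2)$). Over the $T$ steps the total fluctuation is $O(\sqrt{n\gamma})$ (or $O(\sqrt{n(\gamma/T)\log T\,\cdot T}) = O(\sqrt{n\gamma\log T})$ via a maximal inequality), which is $o(n)$ since $\gamma \le$ (something like) $T$ and $T\log T = o(n)$; a Azuma/Freedman maximal inequality for the centered process gives $\min_t n_1^{(t)} \ge n/2 - o(n) \ge n/4$ with probability $1-1/T$. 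Intersecting the two events gives total failure probability $\le 2/T$.

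**Main obstacle.** The genuinely delicate part is the community-size lower bound with the sharp $1/T$ budget: naively $\epsilon_{c_2,n}$ from Assumption~\ref{ass:cluster_size} need not decay as fast as $1/T$, so one must instead analyze the size process $\{n_1^{(t)}\}_{t\in\mathcal T}$ head-on for this model. The subtlety is that it is a bounded-increment random walk observed at $T$ times whose \emph{uniform} deviation must be controlled, and the mean-reverting drift is what prevents the fluctuations from accumulating linearly in $T$ — one needs a Freedman-type inequality (or a reflection/coupling argument exploiting the contraction) to get concentration at scale $\sqrt{n\gamma\log T} = o(n)$ rather than the trivial $\sqrt{nT}$. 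Everything else — the binomial tail for switchers, the union bound, and the final invocation of Proposition~\ref{prop:yes-alignability-deterministic} — is routine. I would also remark (as the two-community, balanced-start hypotheses suggest) that these restrictions are exactly what make the size-process analysis tractable: with $K=2$ the "in" and "out" flows are symmetric and the drift is linear, so the random walk is explicitly a mean-reverting chain; the general-$K$, unbalanced case would require the mixing-type assumptions alluded to in Remark~\ref{remark_cluster_size}.
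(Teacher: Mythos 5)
Your proposal is correct and follows essentially the same route as the paper's proof: a union bound splitting the failure event into (i) too many switchers between some consecutive pair, handled by a Bernstein/binomial tail at level $1/T^2$ per pair (the paper's Lemma~\ref{lem:tail_binomial}), and (ii) some community dropping below the threshold $n/2 - O(\max\{\sqrt{n\gamma\log T},\log T\})$, handled by a martingale-type concentration argument for the mean-reverting size process that exploits the contraction factor $1-2\gamma/T$ (the paper's Lemma~\ref{lem:recursive_decomposition}, which implements your Freedman-style bound via a recursive moment-generating-function argument), before concluding with Proposition~\ref{prop:yes-alignability-deterministic}. You correctly identified that Assumption~\ref{ass:cluster_size} cannot be used as a black box here and that the size process must be analyzed directly to hit the $1/T$ budget, which is exactly the delicate step the paper singles out.
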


This proof involves a novel recursive martingale argument since we need to ensure that alignability holds for the entire sequence of membership matrices across each pair of consecutive time points.  
We expect the argument to hold for more general settings under mild conditions, provided that more careful bookkeeping is employed.
As an aside, our proof shows that the community sizes stay close to $n/2$ for all time points, demonstrating that Assumption \ref{ass:cluster_size} can be satisfied with high probability.

\vspace{1em}
\begin{remark}[No assumptions on the specific node-switching mechanism]
Both the negative and positive results in Propositions \ref{prop:no-alignability} and  \ref{prop:yes-alignability-deterministic}, respectively, formalize the conditions under which alignability is possible without assuming any specific mechanism for how nodes change memberships or knowledge of the connectivity matrices $\{B^{(t)}\}$'s. 
The only assumption needed is Assumption \ref{ass:bernoulli}.
As our simulations suggest, this means that the nodes can change their memberships according to a time-varying Markov transition matrix.
\end{remark}

\vspace{1em}
\begin{remark}[Investigating the behavior of individual nodes]
Consider the two-community setting of Proposition \ref{prop:yes-alignability-probabilistic} where the ratio $\gamma/T$ is small enough that the distinction between the community changing mechanism in Assumption \ref{ass:poisson} and Assumption \ref{ass:bernoulli} is negligible.
With a time-uniform community recovery error bound and alignability, we can track the community trajectory of each node.  
Let $\hat{m}_i^{(t)}$ be the estimated and aligned group membership of node $i$ at time $t$, then Remark \ref{rem:time_uniform} regarding time-uniform estimation error and Proposition \ref{prop:yes-alignability-probabilistic} regarding alignability jointly imply that with high probability, $\hat{m}_i^{(t)}$ correctly track most of the nodes, 
\[
\sup_{t\in\mathcal{T}}\frac{1}{n}\sum_{i=1}^n \mathds{1}(\hat{m}_i^{(t)} \neq m_i^{(t)})\leq c \cdot \frac{1}{\{1-(\gamma r+\log(n)/n)^{1/2}\}^2_+}\cdot\Big\{\gamma r + \frac{\log(n)}{n} + \frac{1}{n^2} + \frac{\log(rT+n)}{rTn^2\rho_n^2}\Big\},
\]
for a universal constant $c$, where $\mathds{1}(\cdot)$ denotes the indicator function.
\end{remark}

\section{Numerical experiments}
In this section, we describe the tuning procedure for choosing $r$ in a data-adaptive manner, as the optimal bandwidth in Corollary \ref{cor:bandwidth} involves nuisance parameters.
Our simulations demonstrate that 1) the tuning procedure reflects the oracle bandwidth, and 2) KD-SoS and the tuning procedure combined outperform other estimators for time-varying SBMs.

\subsection{Tuning procedure} \label{sec:tuning}
We design the following procedure to tune the bandwidth $r$ in practice. 
Observe that typical tuning procedures for time-varying scalar- or matrix-valued data often rely on the local smoothness of the observed data across time. 
For example, this may be predicting the network $A^{(t)}$ using all other networks $\{A^{(s)}\}$ for $s \in \mathcal{S}(t;r)\backslash\{t\}$ for $\mathcal{S}(t;r)$ defined in \eqref{eq:z_box}, but such a procedure would necessarily require additional smoothness assumptions on the connectivity matrices $\{B^{(t)}\}$ on top of our weaker integrability assumption in Assumption \ref{ass:connectivity_min_eig}. 
Since our estimation theory in Theorem \ref{thm:consistency} does not require these additional assumptions, we seek to design a tuning procedure that also does not.

Recall that while Theorem \ref{thm:consistency} does not require smoothness across $\{B^{(t)}\}$, we assume that the community structure is gradually changing via a Poisson($\gamma$) process where $\gamma/T = o(1)$ (Assumption \ref{ass:alignability}). 
Our theory also demonstrates that changes to the community structure are reflected in the eigenspaces of the probability matrices $\{P^{(t)}\}$.
This inspires our method -- for a particular time $t \in \mathcal{T}$ and choice of bandwidth $r$, we kernel-average the networks earlier than $t$ (i.e., $\{A^{(s)}\}$ for $s < t$) and compute its leading eigenspace.
We then compute the $\sin \theta$ distance (defined below) of this eigenspace from the kernel-average of the networks later than $t$ (i.e., $\{A^{(s)}\}$ for $s > t$). 
A small $\sin \Theta$ distance for an appropriate choice of the bandwidth $\hat{r}$ would be indicative of two aspects, relative to other choices of $r$: 1) the community structure among the networks in $\mathcal{S}(t;\hat{r})\backslash [0,t)$ are not too dissimilar to those in networks in $\mathcal{S}(t;\hat{r})\backslash (t,1]$, and 2) $\hat{r}$ is large enough to produce stably estimated eigenspaces among the networks in  $\mathcal{S}(t;\hat{r})\backslash [0,t)$ or $\mathcal{S}(t;\hat{r})\backslash (t,1]$.
Reflecting on our bias-variance decomposition in \eqref{eq:ideal_decomposition}, the first regards the bias caused by community dynamics, and the second regards the variance due to sparsely observed networks.

Recall that for two orthonormal matrices $U,V \in [-1,1]^{n\times K}$, the $\sin \Theta$ distance (measured via Frobenius norm) is defined as,
\begin{equation} \label{eq:sin_theta}
\big\|\sin \Theta(U, V)\big\|_F = \big(K - \big\|U^\top V\big\|^2_F\big)^{1/2}.
\end{equation}
(See references such as \cite{stewart1990matrix} and \cite{cai2018rate}.)
Formally, our procedure is as follows. Suppose a grid of possible bandwidths $r_1, \ldots, r_m$ are provided, in addition to the observed networks $\{A^{(t)}\}$.
\begin{enumerate}
\item For each bandwidth $r \in \{r_1,\ldots,r_m\}$, compute the score of the bandwidth $\theta(r)$ in the following way.
\begin{enumerate}
\item For each time $t \in \mathcal{T}$, compute the leading eigenspaces of $\sum_{s \in \mathcal{S}}(A^{(s)})^2 - D^{(s)}$, where $\mathcal{S}$ is either $\mathcal{S}(t; c\cdot r)\backslash [0,t)$ or $\mathcal{S}(t;c \cdot r)\backslash (t,1]$ for $\mathcal{S}(t;c \cdot r)$ defined in \eqref{eq:z_box}. Then, compute the $\sin \Theta$ distance between these two eigenspaces via \eqref{eq:sin_theta}, denoted as $\theta(t;r)$.
\item Average $\theta(t;r)$ over $t$. That is, $\theta(r) = \sum_t \theta(t;r)/T$.
\end{enumerate}
\item Choose the optimal bandwidth with the smallest score, i.e., $\hat{r} = \argmin_{r \in \{r_1,\ldots,r_m\}} \theta(r)$.
\end{enumerate}
Observe the presence of a small adjustment factor $c>0$ when deploying the above tuning strategy.
This is to account for the fact the size of the sets $\mathcal{S}(t; c\cdot r)\backslash [0,t)$ and $\mathcal{S}(t;c \cdot r)\backslash (t,1]$ are both roughly $c\cdot rT$, while the usage of $\hat{r}$ in KD-SoS would use $\mathcal{S}(t; \hat{r})$, a set of size roughly $2\cdot\hat{r}T+1$. 
Hence, the adjustment factor $c$ scales the bandwidths when tuning to reflect its performance when used by KD-SoS. We have found $c=2$ to be a reasonable choice in practice.

\subsection{Simulation} \label{sec:simulation}
We provide numerical experiments that demonstrate that our estimator described in Section \ref{ss:algorithm}  is equipped with a tuning procedure which: 1) selects the bandwidth based on data that mimics the oracle that minimizes the Hamming error bound, and 2) improves upon other methods designed to estimate the community structure for the model \eqref{eq:adjacency_matrix}.
Consider $T=50$ networks, each consisting of a network among $n=500$ nodes partitioned into $K=3$ communities. 
The first layer set 200 nodes to the first community, 50 nodes to the second community, and 250 nodes to the third community. 
We describe our simulation setup, which consists of two major components: (1) how nodes change memberships between two time points, and (2) the connectivity matrix at each time point.
First, for each consecutive layer, the nodes switch communities according to the following Markov transition matrix,
\begin{equation} \label{eq:simulation:stationary_markov}
\begin{bmatrix}
1-\gamma & 0 & \gamma\\
0 & 1-\gamma & \gamma \\
\frac{4 \gamma}{5} & \frac{\gamma}{5} & 1-\gamma
\end{bmatrix}.
\end{equation}
Observe that $100\cdot (1-\gamma)$ percent of the nodes change communities between any two consecutive layers in expectation, and for the given initial community partition, this transition matrix ensures that the community sizes are stationary in expectation.
Note that we simulate nodes switching communities via a Markov transition matrix for simplicity.
As alluded to in Remark \ref{remark_cluster_size}, our theorems do not specifically require a Markov transition, and we illustrate KD-SoS on other transition mechanisms in the Appendix.
Second, for a particular time $t$, the connectivity matrix is set to alternate between two possible matrices,
\[
B^{(t)} = \begin{cases}
B^{(\text{odd})} &\quad \text{if } t \cdot T \bmod 2 = 1,\\
B^{(\text{even})} &\quad \text{otherwise}
\end{cases} \quad \text{ for } t \in \mathcal{T},
\]
where 
\begin{equation}\label{eq:simulation:connectivity}
B^{(\text{odd})} = 
\begin{bmatrix}
0.62 & 0.22 & 0.46\\
0.22 & 0.62 & 0.46\\
0.46 & 0.46 & 0.85
\end{bmatrix}, \quad \text{and}\quad
B^{(\text{even})} = 
\begin{bmatrix}
0.22 & 0.62 & 0.46\\
0.62 & 0.22 & 0.46\\
0.46 & 0.46 & 0.85
\end{bmatrix}.
\end{equation}
Then, the observed data is generated according to the model \eqref{eq:adjacency_matrix}, for the desired network density $\rho_n$ (varying between sparse networks with $\rho_n=0.05$ to dense networks with $\rho_n=1$) and the nodes' community switching transition matrix \eqref{eq:simulation:stationary_markov} for a given rate $\gamma$ (varying between stable communities with $\gamma=0$ to rapidly-changing communities with $\gamma=0.1$). 
By considering connectivity matrices $B^{(t)}$ of the form \eqref{eq:simulation:connectivity}, the networks alternate between being either homophilic or heterophilic.
Since not all networks are homophilic in this simulation, certain methods, such as those in \cite{pensky2019spectral}, which we compare against, may not perform well.
We also present simulation settings more favorable to such methods in the Appendix.

\begin{figure}[tb]
  \centering
  \includegraphics[width=400px]{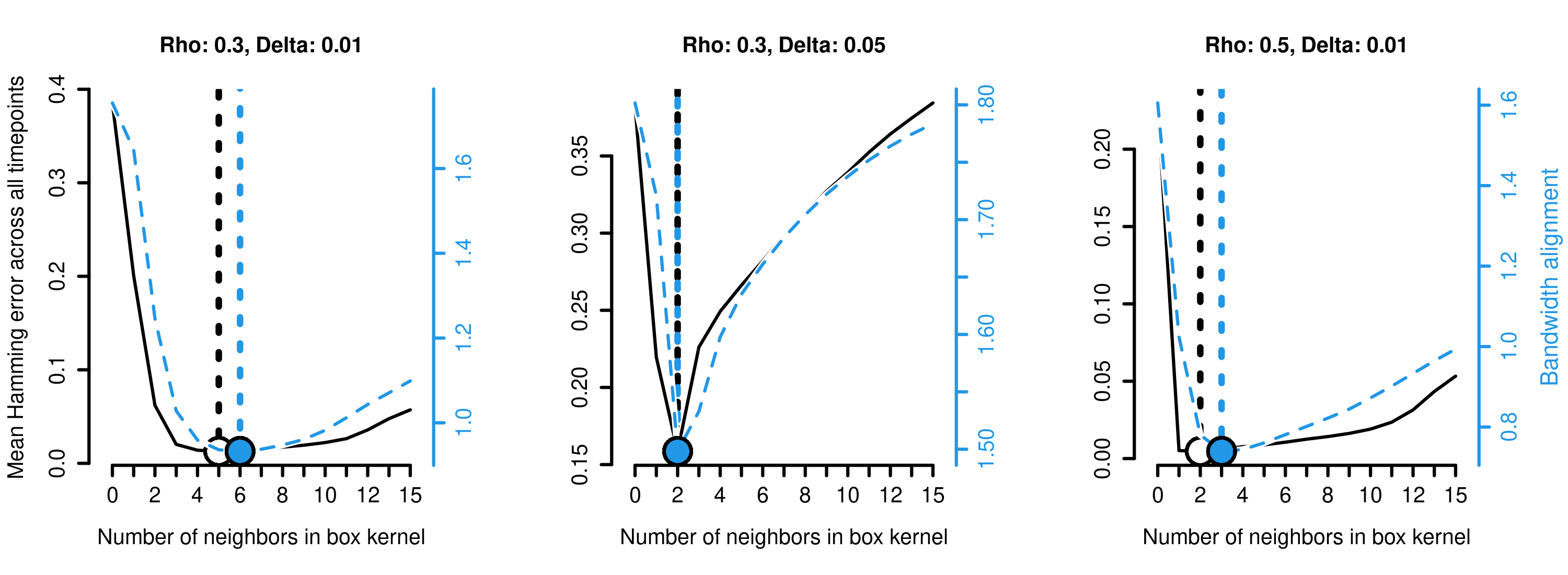}
  \caption
   { 
    Simulation across three different settings of the community switching rate $\gamma$ and network density $\rho_n$, demonstrating KD-SoS's performance for different bandwidths $r$'s.
The Hamming error  \eqref{eq:hamming} or the bandwidth score measured via $\sin \Theta$ \eqref{eq:sin_theta} are averaged across 25 trials for each $r$ (black and blue respectively), and the the vertical dotted lines denote the oracle minimizer of the Hamming error (black) and the chosen bandwidth $\hat{r}$ using the tuning procedure (blue).
 }
    \label{fig:simulation_stationary}
\end{figure}

We first demonstrate that our tuning procedure selects an appropriate bandwidth $r$ of the box kernel, as shown in Figure \ref{fig:simulation_stationary}. 
In the left panel, we fix $\rho_n=0.3$ and $\gamma=0.01$ and plot the mean Hamming error across all networks as a function of applying our estimator with the bandwidth $r$ (black line) and the bandwidth alignment used to tune $r$ (blue line), both averaged across 25 trials. 
A dot of their respective color marks the minimum of both curves.
We make two observations. 
First, the Hamming error follows a classical U-shape as a function of $r$.
This demonstrates that although a single network does not contain information to accurately estimate the communities (i.e., $r=0$), pooling information across too many networks is not ideal either since the community structures vary too much among the networks (i.e., $r=15/50 \approx 0.3$, meaning 15 networks are involved in the box kernel at each side of $t$).
Second, while a bandwidth of $r=5/50$ achieves the minimum Hamming error, our tuning procedure selects $r=6/50$ on average, and the degradation in Hamming error is not substantial.
We also vary $\rho_n$ and $\gamma$. When we set $\gamma$ to 0.05 instead of 0.1, we observe that the bandwidth becomes smaller, indicating that fewer neighboring networks are relevant for estimating a particular network's community structure.
Alternatively, when we set $\rho_n$ to 0.5 instead of 0.3, we observe that the minimized bandwidth becomes smaller. 
However, as implied by the mean Hamming error on the y-axis, this is because more information is contained within each denser network, lessening the need to pool information across networks.

\begin{figure}[tb]
  \centering
  \includegraphics[width=200px]{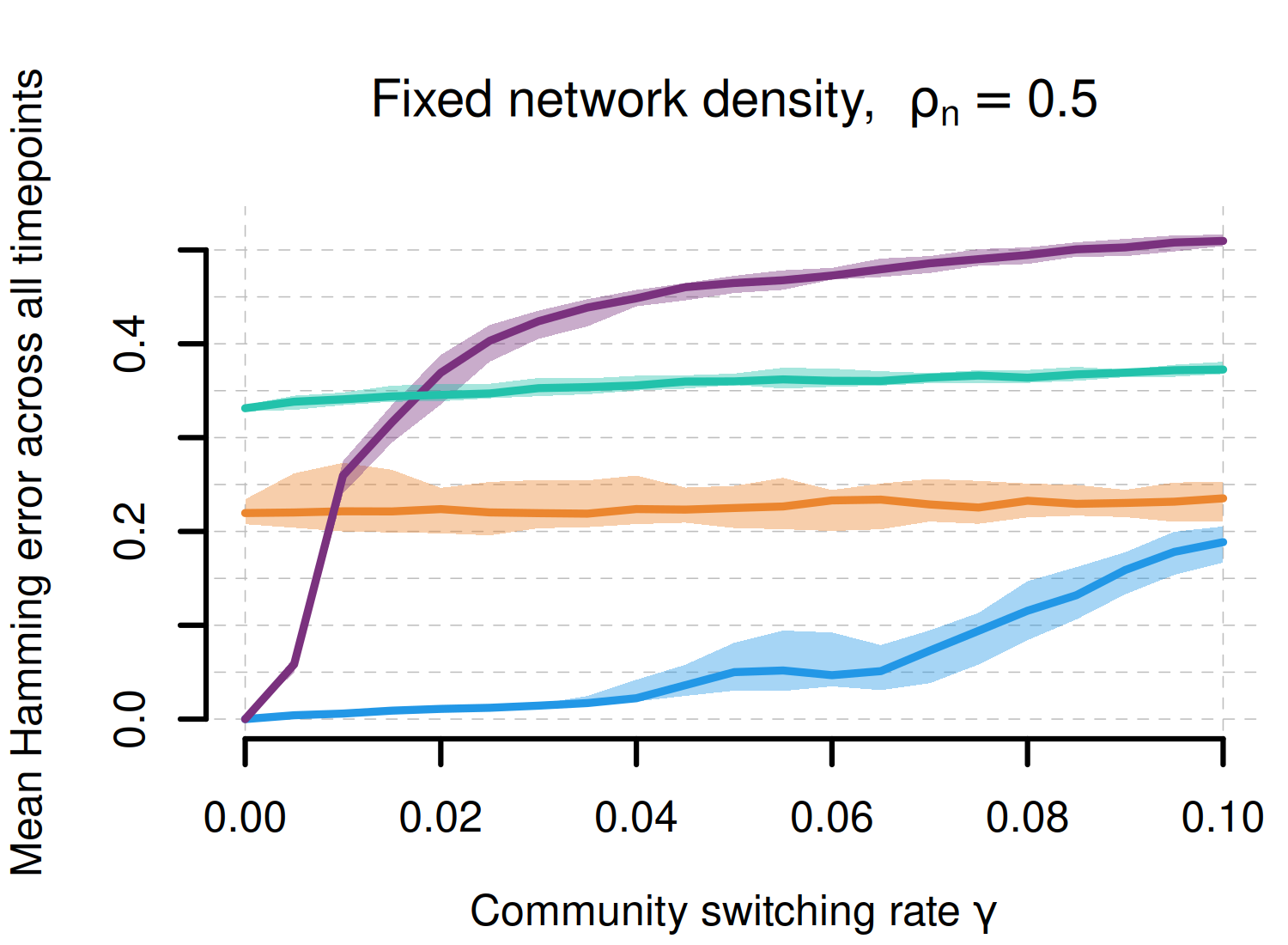}
  \quad
   \includegraphics[width=200px]{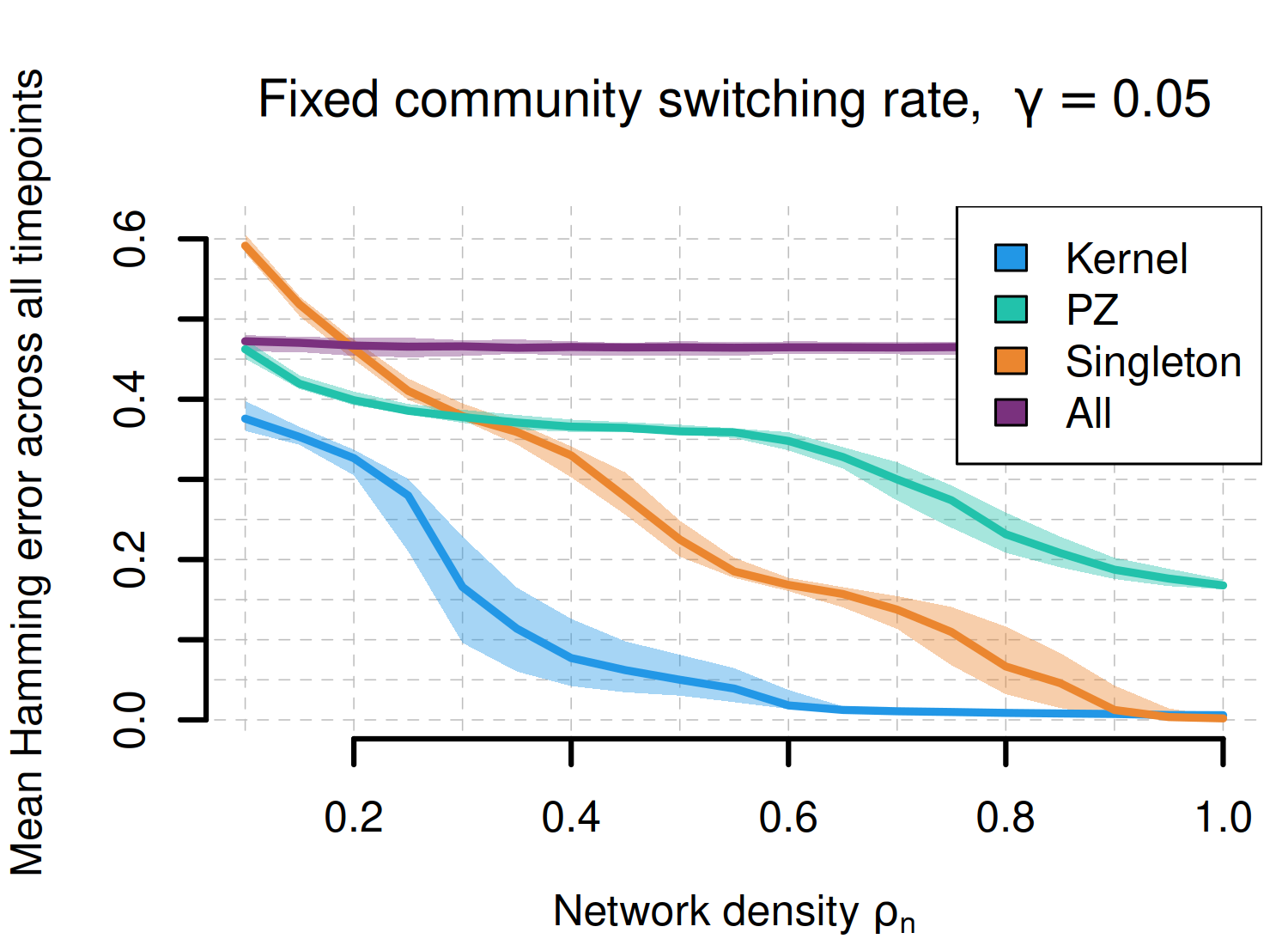}
  \caption
   { 
  Simulation suite across various settings of 
  the community switching rate $\gamma$ (left) or the network density $\rho_n$ (right),
  demonstrating KD-SoS with the bandwidth tuning procedure's performance (``Kernel,'' blue) compared to
  applying spectral clusterings to only one network at a time (``Singleton,'' orange), aggregating across all networks, akin to 
  \cite{lei2020bias} (``All,'' purple), or smooth over a bandwidth of networks without squaring or debiasing the networks, akin to \cite{pensky2019spectral} (``PZ,'' green). The smaller the value on the y-axis is, the better the method performs. The solid lines denote the median over 25 trials, while the bands denote the 10\% to 90\% quantile.
  The simulation setting is more statistically challenging when the community switching rate $\gamma$ is larger or the network density $\rho_n$ is smaller.
   }
    \label{fig:simulation_stationary_all-cleaned}
\end{figure}

We now compare our method against other methods designed to estimate communities for the model \eqref{eq:adjacency_matrix}. Two natural candidates are our debiasing-and-smoothing method where the bandwidth is set to be $r=0$ (i.e., ``Singleton,'' where each network's community is estimated using only that network) and $r=1$ (i.e., ``All,'' where each network's community is estimated by equally weighting all the networks), analogous to \cite{lei2020bias}. We also compare KD-SoS to a method that uses a bandwidth selection procedure to aggregate information across layers by summing the corresponding networks. 
This is analogous to the method proposed by  \cite{pensky2019spectral} (henceforth called the ``PZ'' method).
In this simulation study, we use the same bandwidth selection for KD-SoS and PZ to demonstrate the clear impact of debiasing the sum of squared adjacency matrices.
We measure the performance of each of the three methods by computing the relative Hamming distance between $\hat{M}^{(t)}$ and $M^{(t)}$, averaged across all time $t \in \mathcal{T}$ (i.e., a smaller metric implies better performance).
Our results are shown in Figure \ref{fig:simulation_stationary_all-cleaned}.
In the first simulation suite, we hold the network density $\rho_n=0.5$ but vary the community switching rate $\gamma$ from $0$ to $0.1$ (i.e., from stable communities to rapidly changing communities). Across the 50 trials for each value of $\gamma$, we see that KD-SoS (blue) can retain a small Hamming error below 0.2 across a wide range of $\gamma$.
In contrast, observe that Singleton (orange) exhibits relatively stable performance, which is intuitive since the time-varying structure does not affect this method.
Meanwhile, All (purple) and PZ (green) degrade in performance as $\gamma$ increases due to aggregating among all the networks despite large differences in community structure.
In the second simulation suite, we hold the community switching rate $\gamma = 0.05$ but vary the network density $\rho_n$ from $0.2$ to $1$ (i.e., sparse networks to dense networks). 
Across the 50 trials for each value of $\rho_n$, we see that KD-SoS (blue) performs better as $\rho_n$ increases, which is uniformly better than the Singleton (orange) and PZ (green). 
This is sensible, as KD-SoS with an appropriately chosen bandwidth $r$ aggregates information across networks more effectively than Singleton and PZ.
Meanwhile, all (purple) does not change in performance as $\rho_n$ increases because the time-varying community structure obstructs good performance regardless of network sparsity.

In the Appendix, we present the results of four additional simulations that further demonstrate KD-SoS's performance in other settings. This includes a ``pure'' homophilic setting where methods like PZ can outperform KD-SoS, a setting where $K$ is misspecified, a setting where the Markov transition matrix changes as a function of time $t$, and a setting where the network density changes as a function of $t$.

\subsection{Application to gene co-expression networks along developmental trajectories} \label{sec:application}

We now return to the analysis of the developing brain introduced in Section \ref{sec:introduction}.
We first present descriptive summary statistics for these twelve networks, each comprising the same 993 genes. 
The median of the median degree across all twelve networks is 30.5 (range of 1 to 86, increasing with time), while the mean of the mean degree across all twelve networks is 52.8 (range of 4.6 to 121.9, also increasing with time).
The median overall network sparsity, defined as the number of observed edges divided by the total number of possible edges, across all twelve networks, is 5\% (range: 0.4\% to 12\%, increasing with time). 
Lastly, when analyzed separately, the median number of connected components is 97.5 (range: 34 to 452). 
However, if all the edges across all twelve networks are aggregated, there are two connected components (one with 981 genes, another with 12 genes).

We now present the results obtained by applying KD-SoS to the dataset. 
To encourage a smoother transition between the twelve time points, we use a Gaussian kernel,
i.e.,
\[
Z^{(t;r)} = \sum_{s\in \mathcal{T}} w(s,t;r) \cdot \Big[(A^{(s)})^2 - D^{(s)}\Big],
\quad \text{where }
w(s,t;r) = \exp\Big(\frac{-(t-s)^2}{r^2}\Big)
\]
instead of the aggregation used in \eqref{eq:z_box}. 
Although our theoretical developments in Theorem \ref{thm:consistency} do not use this estimator, our techniques can be applied similarly to such estimators.
Based on a scree plot among $\{A^{(t)}\}$, we chose $K=10$ as the dimensionality and number of communities.
We further document the choice of $K=10$ and the impact of the Gaussian kernel over the box kernel in the Appendix.
The bandwidth is chosen using our procedure in Section \ref{sec:tuning}, among the range of bandwidths $r$ that yielded alignable membership matrices as defined by Definition \ref{def:alignability}.
The membership results for three of the twelve networks are shown in Figure \ref{fig:graphs_labeled}, where nodes of different colors are in different communities. 
Already, we can see gradual shifts in communities within these three networks. 
For example, both the purple and red communities grow in size as time progresses. Meanwhile, genes starting in the olive community eventually become part of the pink or white community.

\begin{figure}[tb]
  \centering
  \includegraphics[width=400px]{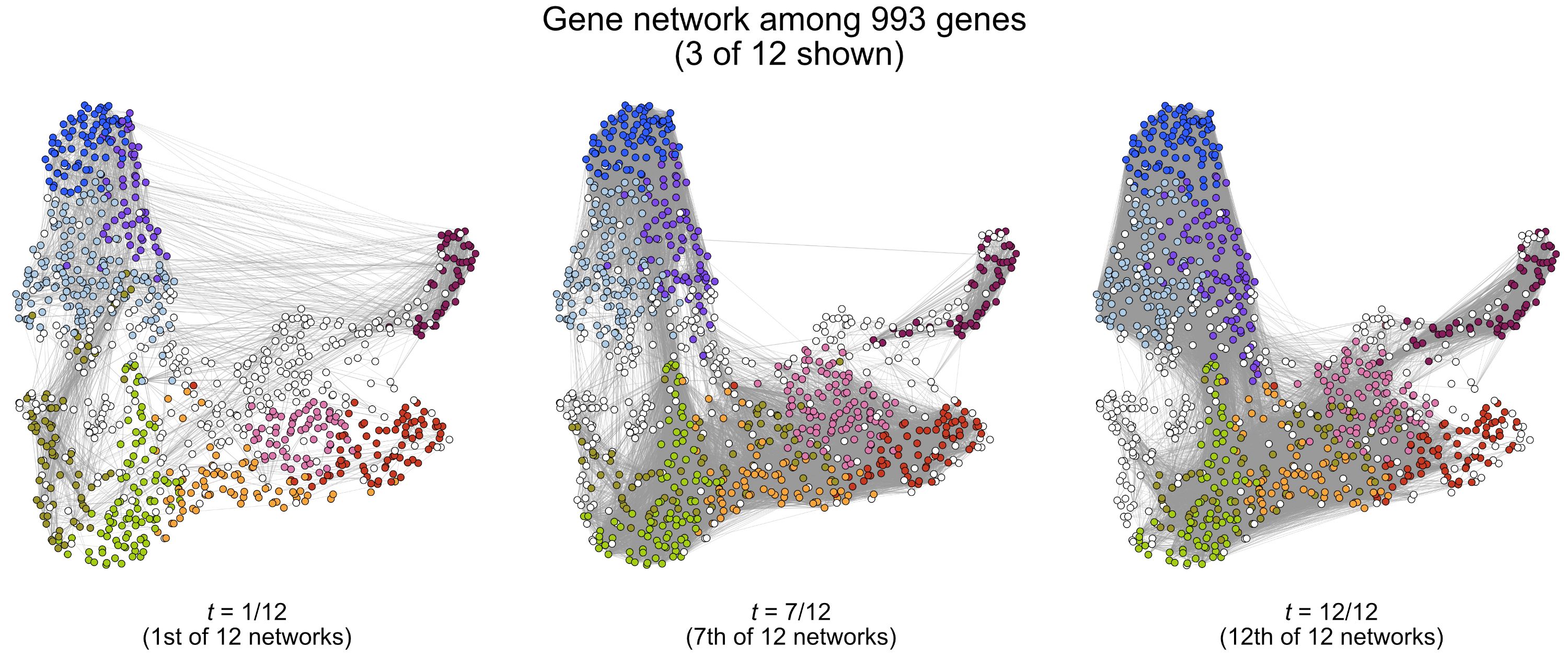}
  \caption
   { 
     Three networks, as displayed in Figure \ref{fig:graphs_unlabeled}, but with genes colored by the $K=10$ different communities
     via $K$ different colors as estimated by KD-SoS and the bandwidth tuning procedure.
   }
    \label{fig:graphs_labeled}
\end{figure}

It is hard to discern the broad summary of how communities are related across time from Figure \ref{fig:graphs_labeled}. 
Hence, we plot the percentage of genes that exit from one community to join a different community between the first three networks in Figure \ref{fig:alluvial}. 
Our tuning bandwidth procedure chooses an $r$ that yields relatively stable communities across time.
Meanwhile, Figure \ref{fig:alluvial} also visualizes the latent 10-dimensional embedding among all 993 genes
for the first three networks. 
We observe that: 1) the SBM model is appropriate for modeling the dataset at hand since the heatmaps demonstrate strong block structure, and 2) a choice of $K=10$ is deemed appropriate via diagnostics based on the scree plot and percent of variance captured. 
Plots demonstrating these diagnostics are shown in the Appendix.
Furthermore, as seen by the visualizations of the latent dimensions and adjacency matrices in the Appendix, none of the 10 communities visually represent sub-communities based on the 10 latent dimensions.

\begin{figure}[tb]
  \centering
  \includegraphics[width=300px]{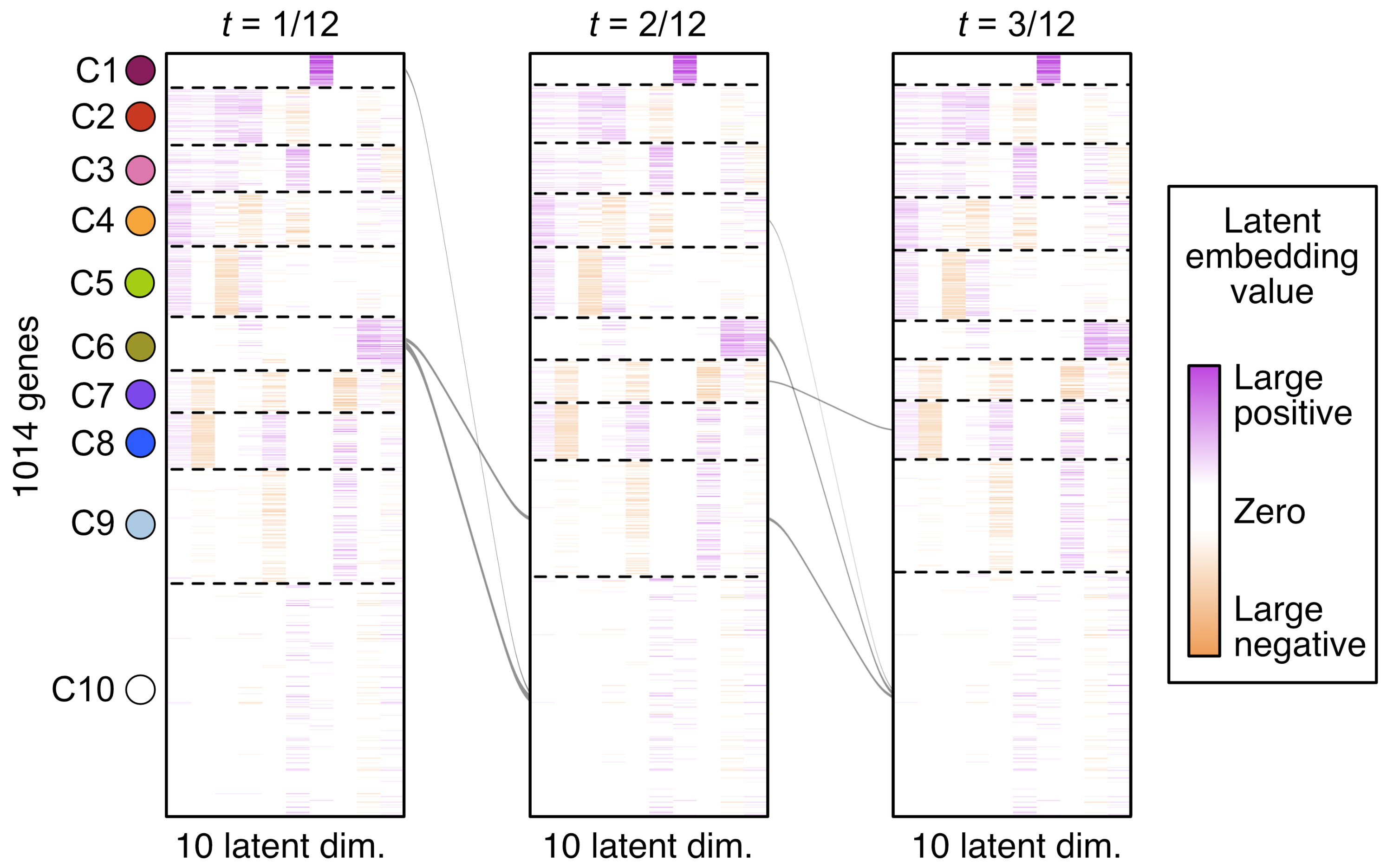}
  \caption
   { 
   The heatmap of the first three networks' leading $K=10$ eigenvectors, where the 993 genes are ordered based on their assigned communities, with their colors (left) corresponding to those in Figure \ref{fig:graphs_labeled}. 
    Let $s,t\in[0,1]$ denote two consecutive two times where $s<t$.
    The size of the arrow connecting two different communities, one at $s$ and another at $t$, denotes the percentage
    of genes that leave the community at time $s$ to a different community at time $t$, ranging from 1\% of the genes
    in the community (thin arrow) to 10\% (thick arrow).
       }
    \label{fig:alluvial}
\end{figure}

Now that we have investigated the appropriateness of the time-varying SBM model, we now address the motivating biological questions asked in Section \ref{sec:introduction} -- what new insights about the glutamatergic development that we could investigate based on the dynamic network structure that we couldn't have inferred based on only analyzing the mean? 
We focus specifically on the fifth and twelfth networks here.
Starting with the fifth network (Figure \ref{fig:mean-correlation}A), we present the enriched Gene Ontology (GO) terms for the selected communities in Table \ref{tab:table1} to investigate the functionality of each set of genes. 
For example, community 2 (red) is highly enriched for coordinated genes related to neurogenesis, despite these genes not yet having high mean expression.
In contrast, community 6 (olive) contains genes related to nervous system development with high gene expression, but these genes are not as well-coordinated.
Meanwhile, community 8 (blue) is highly enriched for coordinated and highly expressed genes related to cellular component biogenesis. 
Likewise, in the twelfth network (Figure \ref{fig:mean-correlation}A and Table \ref{tab:table2}), community 1 (burgundy) is highly enriched for coordinated genes related to cell cycle, despite these genes not yet having high mean expression.
In contrast, community 2 (red) remains highly enriched for genes related to neurogenesis (similar to the fifth network), but these genes are now highly expressed but not coordinated.
Lastly, community 7 (purple) is highly enriched for genes related to the metabolic process that are both coordinated and highly expressed.
Altogether, these results demonstrate that investigating the dynamics of gene coordination can give an alternative perspective on brain development.

\begin{figure}[tb]
  \centering
  \includegraphics[width=400px]{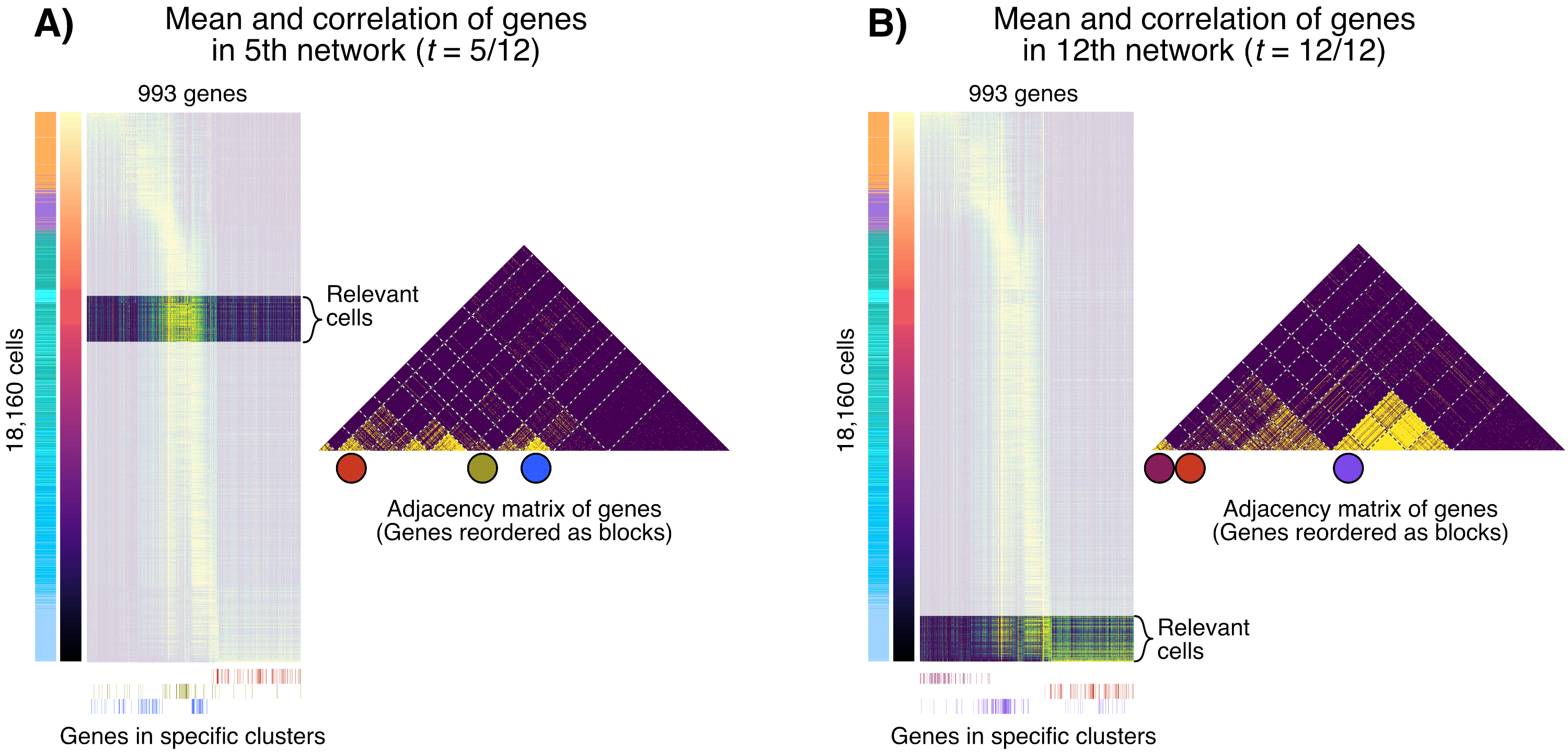}
  \caption
   { 
Correlation networks for the second (A) or twelfth (B) time points, where the cells corresponding to the respective bin of pseudotimes are highlighted via the cell-gene heatmap (left) and the corresponding adjacency matrix among 993 genes where the genes are organized based on their estimated memberships for the respective time point (right). The cell-gene heatmaps are the same as in Figure \ref{fig:cell-umap}. Below, the heatmaps mark the genes (i.e., columns) that are part of specifically highlighted communities, corresponding to the marked entries of the adjacency matrices. 
   }
    \label{fig:mean-correlation}
\end{figure}

\begin{table}
\def~{\hphantom{0}}
\tbl{Description of select gene communities for network $t=5/12$}{%
\begin{tabular}{lcM{0.75in}c|M{0.75in}cc}
 \\
 &\multicolumn{3}{c}{Summary stat.} & \multicolumn{3}{c}{Gene set enrichment} \\ \hline
& \# genes & Mean value (std.) & Connectivity & GO term & \% of community & FDR p-value\\[5pt]
Community 2 (Red) & 92 & 0.06 (0.09) & 0.72 & GO:0022008 (Neurogenesis) &  25\% & $1.81\times 10^{-6}$ \\ \hline
Community 6 (Olive) & 55 & 0.47 (0.36) & 0.16 & GO:0007399 (Nervous system development) & 49\% & $3.83\times 10^{-4}$ \\ \hline
Community 8 (Blue) & 75 & 0.55 (0.27) & 0.88 & GO:0044085 (Cellular component biogenesis) & 39\% & $6.39 \times 10^{-5}$ \\ \hline
\end{tabular}}
\label{tab:table1}
\begin{tabnote}
{Select gene communities for network $t=5/12$, depicting (from left to right) the number of genes in the community, the mean gene expression value and standard deviation among all the cells in this partition (after each gene is standardized across all 18,160 cells), the percent of edges among the genes in the community, an enriched GO term among these genes, the percentage of genes in this community that are in this GO term, and the GO term's FDR value.}
\end{tabnote}
\end{table}

\begin{table}
\def~{\hphantom{0}}
\tbl{Description of select gene communities for network $t=12/12$}{%
\begin{tabular}{lcM{0.75in}c|M{0.75in}cc}
 \\
 &\multicolumn{3}{c}{Summary stat.} & \multicolumn{3}{c}{Gene set enrichment} \\ \hline
& \# genes & Mean value (std.) & Connectivity & GO term & \% of community & FDR p-value\\[5pt]
Community 1 (burgundy) & 56 & 0.01 (0.03) & 0.66 & GO:0007049 (Cell cycle) &  66\% & $1.09\times 10^{-33}$ \\ \hline
Community 2 (Red) & 71 & 0.52 (0.32) & 0.13 & GO:0022008 (Neurogenesis)  & 28\%  & $3.75\times 10^{-5}$ \\ \hline
Community 7 (Purple) & 89 & 0.43 (0.36) & 0.77 & GO: 0008152 (Metabolic process) & 61\% & $8.10\times 10^{-3}$ \\ \hline
\end{tabular}}
\label{tab:table2}
\begin{tabnote}
{Select gene communities for network $t=12/12$, displayed in the same layout as Table 1.}
\end{tabnote}
\end{table}

Additional plots corresponding to networks not shown in Figures \ref{fig:graphs_labeled} through \ref{fig:mean-correlation} 
as well as additional visualizations of the time-varying dynamics are included in the Appendix. 

\section{Discussion}

We establish a bridge between time-varying network analysis and non-parametric analysis in this paper, demonstrating that smoothness across the connectivity matrices $\{B^{(t)}\}$ is not required for consistent community detection. 
We achieve this through a novel bias-variance decomposition, whereby we project networks close to time $t$ onto the leading eigenspace of the network at time $t$. 

While our paper has demonstrated how to relate the discrete changes in nodes' communities to the typically continuous, non-parametric theory, there are four major theoretical directions in which our work can aid future research. 
The first is refining this relation between time-varying networks and non-parametric analyses. 
While previous work for time-varying networks such as \cite{pensky2019spectral} and \cite{keriven2020sparse} derived rates reliant on the smoothness across  $\{B^{(t)}\}$, it is unclear from a minimax perspective how the community estimation rates improve
as $\{B^{(t)}\}$ evolve according to a smoother process. 
Additionally, there have been major historical developments in non-parametric analysis, including the use of local polynomials and trend filtering. These address the so-called boundary bias typical in non-parametric regression and construct estimators that inherently adapt to the data's smoothness. 
We wonder if there are analogies for these estimators for the time-varying SBM setting. 
Secondly, as with any non-parametric estimator, there are unanswered questions about how to tune KD-SoS optimally. 
As we described in Section \ref{sec:tuning}, tuning procedures that rely on prediction, such as cross-validation, are unlikely to be fruitful in the setting we study. 
However, recent ideas using leave-one-out analysis or sharp $\ell_{2\rightarrow \infty}$ estimation bounds for the leading eigenspaces have successfully derived cross-validation-like approaches in other network settings. 
We believe that these ideas can be applied similarly in our setting, where $\{B^{(t)}\}$ is not assumed to be positive definite or smoothly varying. 
Third, we are curious about the optimality of our Hamming estimation bound in this dynamic setting. 
While we are not aware of any optimality results for the setting discussed in this paper, \cite{lei2024computational} discusses the optimal rate from both statistical and computational perspectives for the multi-layer two-community network setting, where community memberships persist across all layers.
Incorporating a smoothness assumption on $\{B^{(t)}\}$ and adjusting KD-SoS's procedure could yield a faster convergence rate.
See \cite{lei2017generic} for a theoretical analysis on how to estimate $B^{(t)}$ itself when incorporating a smoothness assumption.
Lastly, we are interested in determining the optimal $K$ in this dynamic network setting.
While Section \ref{sec:tuning} documents a novel procedure to select the kernel bandwidth, we currently do not have a fully data-driven way to choose the most appropriate number of communities $\hat{K}$. 
Works about goodness-of-fit for a single SBM, such as \cite{chen2018network}, \cite{li2016network}, and \cite{lei2016goodness}, could potentially be extended to our dynamic setting in future work.

\bibliographystyle{biometrika}
\bibliography{bib.bib}

\newpage

\section*{Acknowledgement}
We thank David Choi, Bernie Devlin, and Kathryn Roeder for useful comments when developing this method.  Jing Lei's research is partially supported by NSF grants DMS-2015492 and DMS-2310764.

\section*{Data and code reproducibility}
The human brain development dataset \citep{trevino2021chromatin} was downloaded from \url{https://www.ncbi.nlm.nih.gov/geo/query/acc.cgi?acc=GSE162170}, specifically the \texttt{GSE162170\_rna\_counts.tsv.gz} and \texttt{GSE162170\_rna\_cell\_metadata.txt} files. (Alternatively, the data can also be accessed via \url{https://github.com/GreenleafLab/brainchromatin}.) We use the author's clustering information derived from the Supplementary Information of \cite{trevino2021chromatin}, Table S1 (file: \texttt{1-s2.0-S0092867421009429-mmc1.xlsx}, Sheet F), and genes from Table S1 and Table S3 (files: \texttt{1-s2.0-S0092867421009429-mmc1.xlsx}, Sheet G and \texttt{1-s2.0-S0092867421009429-mmc3.xlsx}, Sheet A).
The code for the KD-SoS as well as all simulations and analyses (including the details on how we preprocessed the single-cell RNA-seq data) is in \url{https://github.com/linnykos/dynamicGraphRoot}.

\section*{Supplementary material}
\label{SM}
In the supplementary materials, we include the pseudocode of KD-SoS,
the proofs of Lemma \ref{lem:decomposition}, 
Theorem \ref{thm:consistency},
Corollary \ref{cor:bandwidth},
Corollary \ref{cor:slow}, Corollary \ref{cor:fast},
Proposition \ref{prop:no-alignability},
Proposition \ref{prop:yes-alignability-deterministic}, and 
Proposition \ref{prop:yes-alignability-probabilistic}.
We also include additional simulations and preprocessing details and more supplemental results in the scRNA-seq analysis from Section \ref{sec:application}.

\appendix

\appendixone

\section{Pseudocode of KD-SoS}

The following provides a high-level pseudocode of our proposed Kernel Debiased Sum-of-Squares (KD-SoS).
\begin{mdframed}
\begin{algorithm}[H] \label{algorithm}
\DontPrintSemicolon
\SetNlSty{textbf}{}{\quad}  
\KwIn{Adjacency matrices $\{A^{(t)}\}_{t=1}^T$; bandwidth $r$; communities $K$.}
\KwOut{Membership matrices $\{\hat{M}^{(t)}\}_{t=1}^T$.}

\For{$t \gets 1$ \KwTo $T$}{
   $\bullet$ $\mathcal{S}(t;r) \gets \{\,s:|t-s|\le rT\}$

   $\bullet$ $Z^{(t)} \gets \sum_{s \in \mathcal{S}(t;r)} \!\bigl[(A^{(s)})^2 - D^{(s)}\bigr]$ 

   $\bullet$ $U^{(t)} \gets \text{top-$K$ eigenvectors of }Z^{(t)}$

   $\bullet$ $\tilde{m}^{(t)} \gets \textsc{KMeans}\bigl(U^{(t)},K\bigr)$ 

   $\bullet$ $\tilde{M}^{(t)} \gets \textsc{OneHot}\bigl(\tilde{m}^{(t)}\bigr)$
}

$\bullet$ $\hat{M}^{(1)} \gets \tilde{M}^{(1)}$

\For{$t \gets 2$ \KwTo $T$}{
   $\bullet$ $C \gets \textsc{Confusion}\bigl(\hat{M}^{(t)}, \tilde{M}^{(t+1)}\bigr)$

   $\bullet$ $R \gets \textsc{Hungarian}(C)$ 

   $\bullet$ $\hat{M}^{(t+1)} \gets \tilde{M}^{(t+1)}\cdot R$ 
}
\Return $\{\hat{M}^{(t)}\}_{t=1}^T$
\end{algorithm}
\end{mdframed}

Here, the \textsc{KMeans} step refers to clustering the rows of $U^{(t)}$ into $K$ clusters via K-means clustering, the \textsc{OneHot} step refers to converting a memberships vector $\tilde{m}^{(t)} \in \{1,\ldots,K\}^n$ into a membership matrix $M \in \{0,1\}^{n \times K}$, the \textsc{Confusion} step refers to computing confusion matrix between the two membership matrices $\hat{M}^{(t)}$ and $\tilde{M}^{(t+1)}$ as in Equation \ref{eq:confusion_population} in the main text, and the \textsc{Hungarian} step refers to computing the optimal permutation of labels for the memberships at time $t+1$ via Hungarian assignment as in Equation \ref{eq:hungarian_assignment_population} in the main text.

\section{Proofs}

\subsection{Proof for bias-variance tradeoff}

\textbf{Proof of Lemma \ref{lem:decomposition}.}
\begin{proof}
The proof is straightforward after observing for any $t\in \mathcal{T}$,
\[
(A^{(t)})^2 = (P^{(t)}+X^{(t)})^2 = (P^{(t)})^2 + P^{(t)}X^{(t)} + X^{(t)}P^{(t)} + (X^{(t)})^2.
\]
and furthermore,
\[
 (P^{(t)})^2 = (Q^{(t)})^2 + \big\{\diag(Q^{(t)})\big\}^2 - 
Q^{(t)}\diag(Q^{(t)}) - \diag(Q^{(t)})Q^{(t)}.
\]

\end{proof}

\vspace{1em}
\textbf{Proof of Theorem \ref{thm:consistency}.}
\begin{proof}
Let $c$ be a constant that can vary from term to term, depending only on the constants $c_1$, $c_2$, $c_3$, $c_\delta$, and $K$. 
Consider the decomposition in Lemma \ref{lem:decomposition}, where we focus on the time $t \in \mathcal{T}$.
We start with the membership bias term (i.e., term $I$). Let $\|\cdot\|_{\op}$ denote the operator norm (i.e., largest singular value). 
For $h =  c\cdot (\gamma r + \log(n)/n)$ for a bandwidth of length $r$, consider the event that 
\begin{align} 
\mathcal{E} 
&= \underbrace{\Big\{
\max_{s \in \mathcal{S}(t;r)} L\big(M^{(s)}, M^{(t)}\big) \leq h
\Big\}}_{\mathcal{E}_1} \bigcap  \underbrace{\Big\{n_k^{(t)} \in \Big[\frac{1}{c K} \cdot n, \frac{c}{K}\cdot n\Big], \quad \text{for all }k \in \{1,\ldots,K\},\;t \in \mathcal{T} \Big\}}_{\mathcal{E}_2}.  \label{eq:membership_change_size_bound}
\end{align}
Lemma \ref{lem:membership_changing_bound} shows that the event $\mathcal{E}_1$ happens with probability at least $1-1/n$, and the event $\mathcal{E}_2$ is controlled by Assumption \ref{ass:cluster_size}. 
Hence, by union bound,
this means event $\mathcal{E}$ happens with probability at least $1-1/n-\epsilon_{c_2,n}$.

The remainder of our analysis will be done in the intersection with event $\mathcal{E}$. 
We start by analyzing the minimum eigenvalue of the target term (i.e., term $V$ in \eqref{eq:ideal_decomposition}).
We define $\tilde{M}^{(t)} = M^{(t)} \big(\Delta^{(t)}\big)^{-1/2}$ as well as 
\begin{equation} \label{eq:definition_tildeb}
\tilde{B}^{(t)} = \big(\Delta^{(t)}\big)^{1/2} B^{(t)} \big(\Delta^{(t)}\big)^{1/2},
\end{equation}
so that $Q^{(t)} = \rho_n \cdot M^{(t)} B^{(t)} (M^{(t)})^\top = \rho_n \cdot \tilde{M}^{(t)} \tilde{B}^{(t)} (\tilde{M}^{(t)})^\top$.
Also recall that the definition of the projection matrix $\Pi^{(t)} = \tilde{M}^{(t)}(\tilde{M})^\top$.
We start with the observation that
\begin{align}
&\Big\|\sum_{s\in \mathcal{S}(t;r)}\Pi^{(t)}(Q^{(s)})^2\Pi^{(t)}\Big\|_{\op}=  \Big\|\sum_{s\in \mathcal{S}(t;r)}\tilde{M}^{(t)}(\tilde{M}^{(t)})^\top(Q^{(s)})^2\tilde{M}^{(t)}(\tilde{M}^{(t)})^\top\Big\|_{\op} \nonumber \\
&= \rho_n^2 \cdot \Big\|\sum_{s\in \mathcal{S}(t;r)}\underbrace{(\tilde{M}^{(t)})^\top
\tilde{M}^{(s)}}_{=U^{(t;s)}} (\tilde{B}^{(s)})^2 (\tilde{M}^{(s)})^\top \tilde{M}^{(t)}\Big\|_{\op}
\nonumber \\
&\overset{(i)}{\geq} \rho_n^2 \cdot \Big\|\sum_{s\in \mathcal{S}(t;r)} \sigma_{\min}^2(U^{(t;s)}) \cdot (\tilde{B}^{(s)})^2\Big\|_{\op} \geq 
\rho_n^2 \cdot \Big\|\Big[\min_{s\in \mathcal{S}(t;r)}\Big\{\sigma^2_{\min}\big(U^{(t;s)}\big)\Big\}\Big] \sum_{s\in \mathcal{S}(t;r)}(\tilde{B}^{(s)})^2\Big\|_{\op} \nonumber\\
&\overset{(ii)}{\geq} (1-ch^{1/2})\cdot  \rho_n^2 \cdot  \Big\|\sum_{s\in \mathcal{S}(t;r)}(\tilde{B}^{(s)})^2\Big\|_{\op}
\overset{(iii)}{\geq} c \cdot (1-ch^{1/2}) \cdot \tilde{T}\rho_n^2 n^2 \label{eq:consistency_1},
\end{align}
where $\tilde{T}=|\mathcal{S}(t;r)| = \min\{2rT+1,T\}$ denotes the number of networks with non-zero weights via the box kernel of bandwidth $r$. 
Here, $(i)$ holds by the variational characterization of eigenvalues (i.e., Rayleigh-Ritz theorem), $(ii)$ holds using Lemma \ref{lem:membership_minsig}, the definition of $h$ under the event $\mathcal{E}$ in \eqref{eq:membership_change_size_bound}, as well as $(1-x)^2 = 1-2x+x^2 \geq 1-2x$ for $x<1$, and $(iii)$ holds via Assumptions
\ref{ass:cluster_size} and \ref{ass:connectivity_min_eig} and the definition of $\tilde{B}$ in \eqref{eq:definition_tildeb}.

We now move to upper-bound relevant terms in \eqref{eq:ideal_decomposition}. Recall that $\sigma_{\min}(A)$ denote the smallest singular value of a matrix $A$. For term $I$, observe that
\begin{align}
&\Big\|(Q^{(s)})^2 - \Pi^{(t)} (Q^{(s)})^2 \Pi^{(t)}\Big\|_{\op} =
\Big\| \Pi^{(s)} (Q^{(s)})^2\Pi^{(s)} -\Pi^{(t)}  (Q^{(s)})^2 \Pi^{(t)}\Big\|_{\op} \nonumber \\
&\overset{(i)}{\leq} \underbrace{\Big(\Big\|\tilde{M}^{(s)}(\tilde{M}^{(s)})^\top\Big\|_{\op} + \Big\|\tilde{M}^{(t)}(\tilde{M}^{(t)})^\top\Big\|_{\op}\Big)}_{=2} \Big\|(Q^{(s)})^2 \Big\|_{\op} \Big\|\tilde{M}^{(s)}(\tilde{M}^{(s)})^\top - \tilde{M}^{(t)}(\tilde{M}^{(t)})^\top\Big\|_{\op} \nonumber \\
&\overset{(ii)}{\leq} c \rho_n^2 n^2 h^{1/2} \label{eq:rate_term_1}
\end{align}
where in $(i)$, we used $ADA^\top - BDB^\top  = ADA^\top  \pm ADB^\top - BDB^\top =
AD(A-B)^\top +(A-B)DB^\top$, and $(ii)$ holds using Lemma \ref{lem:membership_projection_op} and Lemma \ref{lem:probability_op} for some constant $c$ that depends polynomially on $c_2$ and $K$ (recalling the asymptotics in Assumption \ref{ass:cluster_size}).

For the remaining terms (i.e., terms $II$, $III$ and $IV$), since we are considering the regime where $\tilde{T}^{1/2}n\rho_n \geq c_3 \log^{1/2}(\tilde{T}+n)$, we invoke the techniques in Theorem 1 of \cite{lei2020bias}\footnote{
Specifically, 
\eqref{eq:consistency_2}, \eqref{eq:consistency_3}, and \eqref{eq:consistency_4} are analogous to the bound for the term $E_1$, $E_2$, and $E_3$ together with $E_4$ in Theorem 1's proof
in \cite{lei2020bias}, respectively.
},
\begin{align}
&\Big\|\sum_{s\in \mathcal{S}(t;r)} \big[\diag(Q^{(t)})\big]^2 - 
Q^{(t)}\diag(Q^{(t)}) - \diag(Q^{(t)})Q^{(t)}\Big\|_{\op} \leq \tilde{T}n\rho_n^2, \label{eq:consistency_2} \\
&\Big\|\sum_{s\in \mathcal{S}(t;r)}X^{(s)}P^{(s)}+ P^{(s)}X^{(s)} \Big\|_{\op} \leq
c\cdot \tilde{T}^{1/2}n^{3/2}\rho_n^{3/2}\log^{1/2}(\tilde{T}+n), \label{eq:consistency_3}\\
&\Big\|\sum_{s\in \mathcal{S}(t;r)}(X^{(s)})^2- D^{(s)}\Big\|_{\op} \leq \tilde{T}n\rho_n^2 + c\cdot \tilde{T}^{1/2}n\rho_n \log^{1/2}(\tilde{T}+n), \label{eq:consistency_4}
\end{align}
the second and third which hold with probability $1-O((\tilde{T}+n)^{-1})$.

Consider the eigen-decomposition,
\[
\Big\{\sum_{s\in\mathcal{S}(t;r)}\Pi^{(t)}(Q^{(s)})^2\Pi^{(t)}\Big\} = U^{(t;r)} \Lambda^{(t;r)} \big(U^{(t;r)} \big)^\top,
\]
and observe that the eigen-basis of $Q^{(t)}$ is also $U^{(t;r)}$ (i.e., there is a $K \times K$ orthonormal matrix $\Theta$ such that the eigen-basis of $Q^{(t)}$ is equal to $U^{(t;r)} \Theta$, see Lemma 2.1 of \cite{lei2015consistency}.
Recall that $\hat{U}^{(t;r)}$ is the eigen-basis estimated by KD-SoS.
Putting everything together and recalling that the product of two orthonormal matrices yields an orthonormal matrix, 
we see that with an application of Davis-Kahan (see Theorem 2 of \cite{yu2014useful}), there exists a unitary matrix $\hat{O}\in\mathbb{R}^{K\times K}$ such that
\begin{align*}
&\big\|\hat{U}^{(t;r)}\hat{O} - U^{(t;r)}\big\|_F \leq \frac{2^{3/2} K^{1/2} \big\|\big[\sum_{s\in \mathcal{S}(t;r)}(A^{(s)})^2 - D^{(s)}\big] - \big[\sum_{s\in \mathcal{S}(t;r)}\Pi^{(t)}(Q^{(s)})^2\Pi^{(t)}
\big]\big\|_{\op}}{\lambda_{\min}\big(\sum_{s\in \mathcal{S}(t;r)}\Pi^{(t)}(Q^{(s)})^2\Pi^{(t)}\big)}\\
&\overset{(i)}{\leq} c\cdot \frac{h^{1/2}\tilde{T}n^2\rho_n^2  + \tilde{T}n\rho_n^2 + \tilde{T}^{1/2}n^{3/2}\rho_n^{3/2}\log^{1/2}(\tilde{T}+n) + \tilde{T}n\rho_n^2 + \tilde{T}^{1/2}n\rho_n\log^{1/2}(\tilde{T}+n)}{(1-ch^{1/2})_+\cdot \tilde{T}n^2\rho_n^2}\\
&\overset{(ii)}{\leq} \frac{ch^{1/2}}{(1-ch^{1/2})_+} +\frac{2 c}{(1-ch^{1/2})_+\cdot n}+ \frac{c\log^{1/2}(\tilde{T}+n)}{(1-ch^{1/2})_+\cdot \tilde{T}^{1/2}n\rho_n},
\end{align*}
where $(i)$ holds with an application of Lemma \ref{lem:decomposition} as well as Equations \eqref{eq:consistency_1}, \eqref{eq:rate_term_1}, \eqref{eq:consistency_2}, and \eqref{eq:consistency_3},
 and $(ii)$ holds since $n\rho_n \leq c_1$ (due to Assumption \ref{ass:asymptotic}).

Lastly, we wish to convert a Frobenius norm bound between the true and estimated orthonormal matrices into a misclustering error rate.
To do this, from Lemma 2.1 of \cite{lei2015consistency}, we know the minimum Euclidean distance between distinct rows of $U^{(t;r)}$ is at least $c/n^{1/2}$.
Hence, by invoking Lemma D.1 of \cite{lei2020bias} (i.e., a simplification of Lemma 5.3 of \cite{lei2015consistency}),
 the number of misclustered nodes by spectral clustering is no larger than
\[
c\cdot \Big\{\frac{hn}{(1-ch^{1/2})_+^2} + \frac{1}{(1-ch^{1/2})_+^2\cdot n} + \frac{\log(\tilde{T}+n)}{(1-ch^{1/2})_+^2 \cdot \tilde{T}n\rho_n^2}\Big\}.
\]
We divide the above term by $n$ to obtain the percentage of misclustered nodes.
\end{proof}

\vspace{1em}
\textbf{Proof for Corollary \ref{cor:bandwidth}.}
\begin{proof}
Let $c$ be a constant that can vary from term to term, depending only on the constants $c_1$, $c_2$, $c_3$, $c_\delta$, and $K$. 
We seek to derive a the near-optimal bandwidth $r^*$.
Consider the rate in Theorem \ref{thm:consistency}. 
We will only consider the regime where
\[
\gamma r 
\ll 1,
\]
which would mean the leading term in the rate in Theorem \ref{thm:consistency} is upper-bounded by a constant, i.e.,
\[
\frac{1}{\{1-(\gamma r + \log(n)/n)^{1/2}\}^2_+} \ll c.
\]
This allows us to ignore this leading term when deriving the functional form of $r^*$.

Next, observe that if we only want to derive the optimal bandwidth $r^*$ up to logarithmic factors,
we can define
\[
r^* = \min_{r \in [0,1]}\underbrace{c\cdot\gamma r}_{=A(r)} + \underbrace{\frac{\log(T+n)}{rTn^2\rho_n^2}}_{=B(r)}.
\]
Setting the derivative of $A(r)+B(r)$ to be 0 yields,
\[
0 = c\cdot \gamma - \frac{1}{(r^*)^2Tn^2\rho_n^2} \quad \Longrightarrow \quad r^* = c \cdot \frac{1}{(\gamma T)^{1/2}n\rho_n},
\]
for some constant $c$ that depends on $c_1$, $c_2$, $c_3$, $c_\delta$, and $K$.
\end{proof}

\vspace{1em}
\textbf{Proof for Corollary \ref{cor:slow} and Corollary \ref{cor:fast}.}
\begin{proof}
The upper-bound of the relative Hamming distance depends on if $r^* \rightarrow 1$ or $r^* \rightarrow 0$ based on the asymptotic sequence of $n$, $T$, $\gamma$ and $\rho_n$.
Recall that by assumptions in Theorem \ref{thm:consistency}, we require 
\begin{align}
 (rT+1)^{1/2}n\rho_n = \omega\Big\{ \log^{1/2}(rT+n+1)\Big\}. \label{eq:consistency_requirement1}
\end{align}

\begin{itemize}
\item Based on Corollary \ref{cor:bandwidth}, the scenario $r^* \rightarrow 1$ occurs if 
\[
\frac{1}{(\gamma T)^{1/2}n\rho_n} \rightarrow \infty \quad \iff \quad (\gamma T)^{1/2}n\rho_n \rightarrow 0.
\]

We also require that $\gamma r^* \rightarrow 0$ as a necessary condition for
the relative Hamming distance in Theorem \ref{thm:consistency} to converge to 0.
To ensure this, we will require asymptotically
\begin{equation}
\gamma \rightarrow 0.
 \label{eq:consistency_requirement2a}
\end{equation}

Furthremore, the requirement \eqref{eq:consistency_requirement1} is satisfied if
\begin{equation}  \label{eq:consistency_requirement1_bandwidth1}
T^{1/2}n\rho_n = \omega \big\{\log^{1/2}(T+n)\big\}.
\end{equation}

To upper-bound the relative Hamming error, since $\gamma r^* \rightarrow 0$,
for any constant $c$, this means somewhere along this asymptotic sequence of $\{n,T,\gamma,\rho_n\}$, we are guaranteed $\gamma r + \log(n)/n \leq c$ for the remainder of the asymptotic sequence. Then,
\[
L\big(M^{(t)}, \hat{M}^{(t)}\big) =  O\Big\{\gamma + \frac{\log(n)}{n} + \frac{1}{n^2} + \frac{\log(T+n)}{Tn^2\rho^2_n}\Big\},
\]
By \eqref{eq:consistency_requirement2a} and \eqref{eq:consistency_requirement1_bandwidth1}, we are ensured that $L\big(M^{(t)}, \hat{M}^{(t)}\big)$ converges to 0.

\vspace{1em}
\item Based on Corollary \ref{cor:bandwidth}, the scenario $r^* \rightarrow 0$ occurs if 
\begin{equation}
\frac{1}{(\gamma T)^{1/2}n\rho_n} \rightarrow 0 \quad \iff \quad (\gamma T)^{1/2}n\rho_n \rightarrow \infty.
 \label{eq:consistency_requirement2b_part1}
\end{equation}

We also require that $\gamma r^* \rightarrow 0$ as a necessary condition for
the relative Hamming distance  in Theorem \ref{thm:consistency}  to converge to 0.
To ensure this, using the rate of $r^*$ derived in Corollary \ref{cor:bandwidth}, we require asymptotically
\begin{equation}
\quad \gamma r^* = \frac{\gamma^{1/2}}{T^{1/2}n\rho_n} \rightarrow 0 \quad \iff \quad \gamma = o\big\{T(n\rho_n)^2\big\},
 \label{eq:consistency_requirement2b_part2}
\end{equation}
which upper-bounds the maximum $\gamma$ before KD-SoS is no longer consistent.
Furthermore, the requirement \eqref{eq:consistency_requirement1} is satisfied based on the bandwidth $r^*$ in Corollary \ref{cor:bandwidth} if
\begin{equation} \label{eq:consistency_requirement2b_part3}
\Big(\frac{T}{\gamma}\Big)^{1/2}n\rho_n = \omega\big( \log^{1/2}(T+n)\big).
\end{equation}

An asymptotic regime that would satisfy \eqref{eq:consistency_requirement2b_part1}, \eqref{eq:consistency_requirement2b_part2}, and \eqref{eq:consistency_requirement2b_part3} is 
\begin{equation} \label{eq:consistency_requirement2b_part4}
\gamma \text{ is increasing and }
\gamma = o\Big\{\frac{T(n\rho_n)^2}{\log(T+n)}\Big\}.
\end{equation}

To upper-bound the relative Hamming error, since $\gamma r^* \rightarrow 0$,
for any constant $c$, this means somewhere along this asymptotic sequence of $\{n,T,\gamma,\rho_n\}$, we are guaranteed $\gamma r + \log(n)/n \leq c$ for the remainder of the asymptotic sequence. Then,
\begin{align*}
L\big(M^{(t)}, \hat{M}^{(t)}\big) = O\Big\{\frac{\gamma^{1/2}}{T^{1/2}n\rho_n} + \frac{\log(n)}{n} + \frac{1}{n^2} + \frac{\gamma^{1/2}\log(T^{1/2}/(\gamma^{1/2}n\rho_n)+n)}{T^{1/2}n\rho_n}\Big\}.
\end{align*}
By \eqref{eq:consistency_requirement2b_part4}, we are ensured that $L\big(M^{(t)}, \hat{M}^{(t)}\big)$ converges to 0.

The probability that the bound for $L\big(M^{(t)}, \hat{M}^{(t)}\big)$ holds is $1-O\{(rT + n)^{-1}\} - \epsilon_{c_2,n}$. Using the optimal bandwidth $r^*=c/(\gamma^{1/2}T^{1/2}n\rho_n)$ in Corollary \ref{cor:bandwidth} and $\gamma = o\{T(n\rho_n)^2/\log(T+n)\}$ as assumed in the asymptotic regime, we derive that
\[
r^* = \omega\Big\{\frac{\log^{1/2}(T+n)}{Tn^2\rho_n^2}\Big\}.
\]
This means the probability that the bound for $L\big(M^{(t)}, \hat{M}^{(t)}\big)$ holds is
\[
1-O\Big\{\Big(\frac{\log^{1/2}(T+n)}{n^2\rho_n^2} + n\Big)^{-1}\Big\} - \epsilon_{c_2,n}.
\]
\end{itemize}

Hence, we are done.
\end{proof}

\vspace{1em}
\textbf{Proof of Proposition \ref{prop:no-alignability}.}
\begin{proof}
We split the proof into two parts. 

\textbf{Deterministic component.}
Here, we prove if more than $n/2$ nodes change memberships between $M^{(t)}$ and $M^{(t+1/T)}$ for a particular $t \in \mathcal{T}\backslash \{1\}$, then $M^{(t)}$ and $M^{(t+1/T)}$ are not alignable. Then, by definition, the entire sequence of memberships is not alignable. 

Consider the confusion matrix $C \in \{0,\ldots,n\}^{K\times K}$ formed from $M^{(t)}$ and $M^{(t+1/T)}$.
Since more than $n/2$ nodes change memberships, then by definition, 
the sum of the off-diagonal entries in $C$ must be larger than $n/2$, and the sum of the diagonal entries in $C$ must be smaller than $n/2$. Hence, there must exist a diagonal entry in $C$ whereby it is smaller than its respective column-sum or row-sum. Hence, it must be the case that either $C$ or $C^\top$ is not diagonally dominant, and hence, $M^{(t)}$ and $M^{(t+1/T)}$ is not alignable.

\vspace{.5em}
\textbf{Probabilistic component.}
Here, we prove that if $\gamma$ is large relative to $T$, then there is a non-vanishing probability that more than $n/2$ nodes change memberships  between $M^{(t)}$ and $M^{(t+1/T)}$ for some time $t \in \mathcal{T}\backslash \{1\}$.

Towards this end, let $X^{(t)}$ denote the total number of instances when nodes change communities between time $t$ and $t+1/T$ based on Assumption \ref{ass:poisson}. (Note, this random variable is not a Poisson, since the Poisson process denotes the number of instances a node changes membership, not the number of unique nodes change membership.)
We are interested in when the probability $X^{(t)} \geq n/2$ for some $t\in\{1/T,\ldots,(T-1)/T\}$ is bounded away from 0.
That is,
\begin{align}
&\mathbb{P}\Big(X^{(t)} \geq n/2, \text{ for some } t \in\{1/T,\ldots,(T-1)/T\}\Big)\nonumber \\
&=1-\mathbb{P}\Big(X^{(t)} \leq n/2, \text{ for all } t \in\{1/T,\ldots,(T-1)/T\}\Big) \nonumber \\
&= 1-\mathbb{P}\Big(X^{(1/T)} \leq n/2\Big)^{T-1}=1-\Big\{1-\mathbb{P}\Big(X^{(1/T)} \geq n/2\Big)\Big\}^{T-1} \label{eq:information_eq1}
\end{align}

To lower-bound the RHS of \eqref{eq:information_eq1}, consider a probability $p$ that a node changes membership in a time interval of length $1/T$. Since each node changes memberships independently of one another, the total number of nodes that change memberships is modeled as $X^{(1/T)} = \text{Binomial}(n, p)$ for a $p$ to be determined, and we are interested the probability that $X^{(1/T)} \geq n/2$. Certainly, if $p = 1/2$, then the probability of $X^{(1/T)} \geq n/2$ is strictly bounded away from 0. Hence, we are interested in a $p$ less than $1/2$. 

Towards this end, invoking a lower-bound of the upper-tail of a Binomial (see Chernoff-Hoeffding bounds in references such as \cite{pelekis2016lower}), observe that 
\begin{equation} \label{eq:binomial_lower-bound}
\mathbb{P}\big( X^{(1/T)} \geq n/2\big) \geq \frac{1}{(2n)^{1/2}} \exp\Big\{-n D\big(\frac{1}{2} \;||\; p\big)\Big\},
\end{equation}
where 
\begin{align}
D\big(\frac{1}{2} \;||\; p\big) &= \frac{1}{2} \cdot \log \Big(\frac{1/2}{p}\Big) +  \frac{1}{2} \cdot \log \Big(\frac{1/2}{1-p}\Big) \nonumber \\
&= \frac{-1}{2} \cdot \log \big(2\cdot p\big) +  \frac{-1}{2} \cdot \log \big\{2\cdot(1-p)\big\} \nonumber \\
&= \log \Big[\big\{4 \cdot p \cdot (1-p)\big\}^{-1/2}\Big]\label{eq:bernoulli_divergence}.
\end{align}

For reasons we will shortly discuss, we are interested when \eqref{eq:binomial_lower-bound} is lower-bounded by $1/(T-1)$. Hence, combining \eqref{eq:binomial_lower-bound} with  \eqref{eq:bernoulli_divergence}, we are interested in $x$ such that
\begin{equation} \label{eq:binomial_lower-bound2}
\mathbb{P}\big(X^{(0)} \geq n/2\big) \geq 
\frac{1}{(2n)^{1/2}} \cdot \Big[\big\{4 \cdot p \cdot (1-p)\big\}^{n/2}\Big] \geq \frac{1}{T-1},
\end{equation}
which is equivalent to
\begin{equation} \label{eq:binomial_lower-bound3}
p \cdot (1-p) \geq \frac{1}{4}\cdot \Big\{\frac{(2n)^{1/2}}{T-1}\Big\}^{2/n}.
\end{equation}
Observe that if we assume that $p\leq 1/2$, then a value of $p$ that satisfies 
\begin{equation} \label{eq:binomial_lower-bound4}
p^2 \geq \frac{1}{4}\cdot \Big\{\frac{(2n)^{1/2}}{T-1}\Big\}^{2/n} 
\quad \iff \quad 
p \geq \frac{1}{2}\cdot \Big\{\frac{(2n)^{1/2}}{T-1}\Big\}^{1/n} 
\end{equation}
is ensured to satisfy \eqref{eq:binomial_lower-bound3}.

This means if $1/2 \cdot ((2n)^{1/2}/(T-1))^{1/n} \leq p \leq 1/2$, then there is at least probability $1/(T-1)$ that $X^{(1/T)} \geq n/2$.
Therefore, using this value of $p$, we infer from \eqref{eq:binomial_lower-bound2} that
\begin{equation} \label{eq:information_eq3}
\Big\{1-\mathbb{P}\Big(X^{(1/T)} \geq n/2\Big)\Big\}^{T-1}  \leq \big(1-\frac{1}{T-1}\big)^{T-1} \overset{(i)}{\leq}1/e \approx 0.37,
\end{equation}
where $(i)$ uses $\lim_{x\rightarrow \infty}(1-1/x)^x = 1/e$ from below.
Plugging \eqref{eq:information_eq3} back into \eqref{eq:information_eq1} shows 
for probability $p$ that a node changes membership within any time interval of length $1/T$, then
for any $T \geq 2$, 
\[
\mathbb{P}\Big(X^{(t)} \geq n/2, \text{ for some } t \in \{1/T, \ldots, (T-1)/T\}\Big) \geq 1- 1/e \approx 0.63.
\]

Lastly, we are now interested in the relation between $\gamma$ and $T$ such that there is at least a probability $p$ of a node changing memberships in a time interval of length $1/T$. By the Poisson process in Assumption \ref{ass:poisson}, the probability a node changes membership in such an interval is
\begin{align*}
1 - \exp(-\gamma/T) &\geq p =  \frac{1}{2}\cdot \Big(\frac{(2n)^{1/2}}{T-1}\Big)^{1/n} \\
\Longrightarrow \quad  \gamma&\geq T \cdot \log\bigg[\bigg\{1- \frac{1}{2}\cdot 
\Big(
\frac{2^{1/2}n^{1/2}}{T-1}
\Big)^{1/n}\bigg\}^{-1}\bigg]
\end{align*}
Hence, we are done. 
\end{proof}

\vspace{1em}
\textbf{Proof of Proposition \ref{prop:yes-alignability-deterministic}.}
\begin{proof}
Consider a particular time $t \in \mathcal{T}\backslash \{1\}$.
For any time $t$ and $t+1/T$, consider the confusion matrix $C^{(t,t+1/T)}$ formed between membership matrices
$M^{(t)}$ and $M^{(t+1/T)}$. Let $C = C^{(t,t+1/T)}$ for notational simplicity.
Let $m_{\min}$ denote the size of the smallest community at time $t$,
\[
m_{\min} = \min_{k \in \{1,\ldots,K\}} \sum_{i=1}^{n}M_{ik}^{(t)} = \min_{k \in \{1,\ldots,K\}} \sum_{\ell = 1}^{K} C_{k\ell}.
\]
Consider any community $k \in \{1,\ldots,K\}$. We first compare $C_{kk}$ to the sum of all the other elements in the row (i.e., the number of nodes that leave community $k$ between time $t$ and $t+1/T$). Let $z = \sum_{\ell:k\neq \ell}C_{k \ell}$. Since $C_{kk}+z$ equals the number of nodes in community $k$ at time $t$, and that the number of nodes that change is at most $m_{\min}/2$, we know
\[
m_{\min} \leq C_{kk}+z \quad \text{and}\quad z \leq m_{\min}/2 \quad \Rightarrow \quad C_{kk} \geq z.
\]

Next, we compare $C_{kk}$ to the sum of all the other elements in the column (i.e., the number of nodes that enter community $k$ between time $t$ and $t+1/T$). Let $y = \sum_{\ell:k\neq \ell}C_{\ell k}$. Since $C_{kk}+z$ equals the number of nodes in community $k$ at time $t$, and the number of nodes that change total is less than $m_{\min}/2$, we know
\[
m_{\min} \leq C_{kk}+z \quad \text{and}\quad z+y \leq m_{\min}/2  \quad \Rightarrow \quad C_{kk} \geq m_{\min}/2 + y \geq y,
\]
which completes the proof.
\end{proof}

Note that the above proof works for any number of communities, not necessarily only when $K=2$.

\vspace{1em}
\textbf{Proof of Proposition \ref{prop:yes-alignability-probabilistic}.}
\begin{proof}

Let $x^{(t)} = \|M^{(t)} - M^{(t+1/T)}\|_0$ and $y^{(t)} = \min_{k \in \{1,\ldots,K\}} \sum_{i=1}^{n}M^{(t)}_{ik}$. 
Observe that we have the following relation in events,
\[
\Big\{x^{(t)} \geq y^{(t)}, \;\; \text{for some time }t\in \mathcal{T} \Big\}\Longrightarrow \underbrace{\Big\{x^{(t)} \geq \Delta, \;\; \text{for some time }t\in \mathcal{T}\Big\}}_{\mathcal{E}_1} \cup \underbrace{\Big\{\Delta \geq y^{(t)}, \;\; \text{for some time }t\in \mathcal{T}\Big\}}_{\mathcal{E}_2}.
\]
for any constant $\Delta > 0$. 
Hence, we wish to upper-bound the following undesirable event via a union bound,
\begin{equation} \label{eq:no-alignability1}
\mathbb{P}\Big(x^{(t)} \geq y^{(t)},\;\; \text{for some time }t\in \mathcal{T}\Big) \leq 
\mathbb{P}\big(\mathcal{E}_1 \big) +
\mathbb{P}\big(\mathcal{E}_2\big).
\end{equation}

We invoke Lemma \ref{lem:recursive_decomposition} to first upper-bound $\mathbb{P}(\mathcal{E}_2)$ by $1/T$ via a recursive decomposition and to pick the appropriate threshold $\Delta$, specifically,
\[
\Delta = \frac{n}{2} - c\cdot\max\big[\{n\gamma \log (T)\}^{1/2}, \log(T) \big]
\] 
for some universal constant $c$. (By Assumption \ref{ass:asymptotic} and  $\gamma/T=o(1)$, we are assured that $\max[\{n\gamma \log (T)\}^{1/2}, \log(T)] \ll n$.)
Using this threshold $\Delta$, we then invoke Lemma \ref{lem:tail_binomial} which shows that
\[
\mathbb{P}\Big\{x^{(t)} > \frac{5n\gamma}{T} + 4\log(T)\Big\}  \leq 1/T^2, \quad \text{for a particular time } t \in \mathcal{T}.
\]
Since Assumption \ref{ass:asymptotic} and  $\gamma/T=o(1)$ ensure that $5n\gamma/T + 4\log(T) \ll \Delta$, 
we have upper-bound shown $\mathbb{P}(\mathcal{E}_1) < 1/T$ via a union bound.
Therefore, altogether, we obtain the desired upper-bound when plugging these bounds into \eqref{eq:no-alignability1},
\[
\mathbb{P}\Big(\big\|M^{(t)} - M^{(t+1/T)}\big\|_0 \geq \min_{k\in\{1,\ldots,K\}}\sum_{i=1}^{n}M_{ik}^{(t)}, \text{ for some time }t \in \mathcal{T}\Big) \leq \frac{2}{T},
\]
or equivalently,
\[
\mathbb{P}\Big(\big\|M^{(t)} - M^{(t+1/T)}\big\|_0 < \min_{k\in\{1,\ldots,K\}}\sum_{i=1}^{n}M_{ik}^{(t)}, \text{ for all time }t \in \mathcal{T}\Big) \geq 1-\frac{2}{T},
\]
and complete the proof.
\end{proof}

\subsection{Helper lemmata}

We aim to probabilistically bound the relative Hamming distance between two membership matrices given the dynamics stated in Section \ref{sec:model}.

\begin{lemma}  \label{lem:membership_changing_bound}
Given the model in Section \ref{sec:model},
consider a particular $t, r \in [0,1]$.
Letting $\delta = \min\{t+r, 1\} - \max\{t-r,0\}$, then 
\begin{equation} \label{eq:membership_changing_bound}
\mathbb{P}\Big\{
\max_{s \in \mathcal{S}(t;r)} L\big(M^{(s)}, M^{(t)}\big) \geq 4\gamma\delta + \frac{3\log(n)}{n}
\Big\} \leq \frac{1}{n}
\end{equation}
for some universal constant $c$.
\end{lemma}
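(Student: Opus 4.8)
The plan is to bound, for each fixed $s \in \mathcal{S}(t;r)$, the relative Hamming distance $L(M^{(s)}, M^{(t)})$ by the number of nodes that change communities at least once between times $\min\{s,t\}$ and $\max\{s,t\}$, then take a union bound over the (at most $2rT+1$, but crucially for the probabilistic argument it suffices to work on the whole interval of length $\delta$) relevant times. First I would observe the deterministic inequality $L(M^{(s)}, M^{(t)}) \le \frac{1}{n}\#\{i : m^{(u)}_i \text{ is not constant for } u \in [t-r,t+r]\cap[0,1]\}$: if a node does not switch at all over the whole window, it is correctly aligned in the confusion matrix between $M^{(s)}$ and $M^{(t)}$ for \emph{every} pair $s,t$ in the window simultaneously (so the same permutation $R$ — the identity — works), hence $L$ is controlled by the count of "ever-switching" nodes. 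This replaces the $\max$ over $s$ by a single random variable $N$, the number of nodes that switch at least once in an interval of length $\delta$.

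Next I would bound $N$ probabilistically. Under Assumption \ref{ass:poisson}, node $i$ follows an independent $\Poisson(\gamma)$ process on $[0,1]$, so the probability it switches at least once in an interval of length $\delta$ is $p := 1 - e^{-\gamma\delta} \le \gamma\delta$. Thus $N \sim \mathrm{Binomial}(n, p)$ with $\mathbb{E}[N] = np \le n\gamma\delta$. I would then apply a multiplicative Chernoff bound for the upper tail of a binomial: for $N \sim \mathrm{Binomial}(n,p)$ and any $a \ge \mathbb{E}[N]$, $\mathbb{P}(N \ge a) \le \exp(-a/3)$ once $a \ge 2\mathbb{E}[N]$ (or more carefully, $\mathbb{P}(N \ge 2\mathbb{E}[N] + t) \le e^{-t/3}$ type bounds). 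Taking $a = 4n\gamma\delta + 3\log(n)$ gives $a \ge 2\mathbb{E}[N] + 3\log n$, so the tail is at most $\exp(-\log n) = 1/n$, which is exactly the claimed bound after dividing by $n$: $\mathbb{P}(N/n \ge 4\gamma\delta + 3\log(n)/n) \le 1/n$.

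The main obstacle — and the step requiring the most care — is the deterministic reduction in the first paragraph: one must verify that the optimal label permutation $R(M^{(s)}, M^{(t)})$ is indeed the identity whenever the set of ever-switching nodes is small, or at least that the number of misaligned nodes is bounded by $N$ regardless of whether $R$ is the identity. The cleanest route is to note that for the $n - N$ never-switching nodes, the submatrix of the confusion matrix $C(M^{(s)},M^{(t)})$ restricted to those nodes is diagonal (under the natural identification of communities), so the identity permutation already matches at least $n - N$ nodes; since $R$ is chosen to \emph{maximize} the number of matched nodes, the optimal $R$ matches at least $n - N$ as well, giving $L(M^{(s)},M^{(t)}) \le N/n$ for every $s$ in the window simultaneously. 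This uniformity over $s$ is what lets us avoid a union bound over the $|\mathcal{S}(t;r)|$ time points and instead pay only the single $1/n$ from the binomial tail. I would also note in passing that the constant $c$ mentioned in the statement is not actually needed — the bound holds with the explicit constants $4$ and $3$ — so the proof can simply exhibit these.
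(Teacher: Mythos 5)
Your proposal is correct and follows essentially the same route as the paper's proof: a deterministic reduction of $\max_{s}L(M^{(s)},M^{(t)})$ to the count of nodes that switch at least once anywhere in the window of length $\delta$ (your confusion-matrix justification of this step is if anything more explicit than the paper's chain of event implications), followed by a binomial upper-tail bound with threshold $4n\gamma\delta+3\log(n)$. The only cosmetic difference is in the concentration step — you model the count directly as $\mathrm{Binomial}(n,1-e^{-\gamma\delta})$ and apply a multiplicative Chernoff bound, whereas the paper discretizes the window into $1/T$-subintervals and applies Bernstein's inequality to a Binomial with $n\tilde{T}$ trials — but the two arguments are interchangeable and yield the same constants.
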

\begin{proof}
Let $t_-  = \min \mathcal{S}(t;r)$, $t_+ =  \max \mathcal{S}(t;r)$ and choose any $t', t'' \in \mathcal{S}(t;r)$ where $0 \leq t_- \leq t' \leq t'' \leq t_+ \leq 1$. 
For an $\tau > 0$ to be determined, consider the four events,
\begin{align*}
\mathcal{E}_1 &= \Big\{n \cdot L(M^{(t')}, M^{(t'')}) \geq n\gamma\delta + \tau \Big\},\\
\mathcal{E}_2 &= \Big\{\big(\text{\# of nodes that changed communities anytime between }t'\text{ and }t''\big) \geq n\gamma\delta +\tau\Big\},\\
\mathcal{E}_3 &= \Big\{\big(\text{\# of nodes that changed communities anytime between }t_-\text{ and }t_+\big) \geq n\gamma\delta +\tau\Big\},\\
\mathcal{E}_4 &= \Big\{\sum_{s=t_-}^{t_+} n \cdot L(M^{(s)}, M^{(s+1/T)})
\geq n\gamma\delta +\tau\Big\}.
\end{align*}
Observe that for simultaneously over such choice of $t'$ and $t''$, $\mathcal{E}_1 \Rightarrow
\mathcal{E}_2 \Rightarrow \mathcal{E}_3 \Rightarrow \mathcal{E}_4$, where the last event models the number of nodes that change communities between any two consecutive time points in $\mathcal{S}(t;r)$. 
Hence $\mathbb{P}(\mathcal{E}_1) \leq \mathbb{P}(\mathcal{E}_4)$, which implies that 
\begin{equation} \label{eq:lemmata:poisson_1}
\mathbb{P}\Big\{\max_{s \in \mathcal{S}(t;r)} L\big(M^{(s)}, M^{(t)}\big) \geq \gamma\delta + \tau/n\Big\} \leq 
\mathbb{P}\big(\mathcal{E}_4 \big).
\end{equation}
Hence, we focus on the upper-bounding the RHS. 

Let $\tilde{T} = |\mathcal{S}(t;r)| = \delta \cdot T$, i.e., the number of summands in the summation on the LHS of $\mathcal{E}_4$. This is also the number of non-overlapping intervals of length $1/T$ (plus one) that fit between $t_-$ and $t_+$.
Observe that since the nodes change communities according to a Poisson($\gamma$) process independently of one another, the probability a node changes communities in a time interval of $1/T$ is $1-\exp(-\gamma/T)$.
Consider two Binomial random variables $X$ and $Y$ defined as
\begin{align*}
X &\sim \text{Bernoulli}\big(n\cdot \tilde{T}, \; 1-\exp(-\gamma/T)\big)\\
Y &\sim \text{Bernoulli}\big(n\cdot \tilde{T}, \; \max\big\{\gamma/T,1\big\}\big),
\end{align*} 
which represents the number of success among $n \cdot \tilde{T}$ trials each with a probability $1-\exp(-\gamma/T)$ or  $\{\gamma/T,1\}$ of success respectively.
(Here, a ``success'' represents a node changing communities within a time interval of length $1/T$.)
Recalling that $\exp(-x) \geq 1-x$ and that $\delta = \tilde{T}/T$ by definition,
observe,
\begin{equation} \label{eq:lemmata:poisson_2}
\mathbb{P}\big(\mathcal{E}_4  \big) = \mathbb{P}\big(X \geq  n\gamma\delta + \tau\big) \leq \mathbb{P}\big(Y \geq  n\gamma\delta + \tau\big).
\end{equation}

Continuing, keeping in mind that $\mathbb{E}(X) \leq \mathbb{E}(Y) \leq n\gamma\delta$, we derive\footnote{Observe: if $\gamma/T > 1$, then $\mathbb{P}(Y \geq n\gamma\delta + \tau) = 0$
since the maximum value of $Y$ is $n\tilde{T}$, whereas $n\gamma\delta = n\gamma \tilde{T}/T > n \tilde{T}$.}
\begin{align}
\mathbb{P}\big(Y \geq  n\gamma\delta + \tau\big) &\overset{(i)}{\leq} 
\exp\Big\{
\frac{-\frac{1}{2}\tau^2}{\frac{\gamma}{T} \cdot (1-\frac{\gamma}{T} ) \cdot n \cdot \tilde{T} + \frac{1}{3}\tau}
\Big\} \nonumber \\
&\leq
\exp\Big\{
\frac{-\frac{1}{2}\tau^2}{n\gamma\delta + \frac{1}{3}\tau}
\Big\}  \label{eq:membership_changing_bound_1}
\end{align}
where $(i)$ holds via Bernstein's inequality (for example, Lemma 4.1.9 from \cite{de2012decoupling}).

Consider 
$\tau = 3n\gamma\delta + 3 \log(n)$. If $\log(n) > n\gamma\delta$, then
we have from \eqref{eq:membership_changing_bound_1} that
\[
\mathbb{P}\big(Y \geq  n\gamma\delta + \tau\big) \leq \exp\Big\{
\frac{-9/2 \cdot\log^2(n)}{3 \log(n)}\Big\} \leq 1/n.
\]
Otherwise, if $n\gamma\delta > \log(n)$, then we have from \eqref{eq:membership_changing_bound_1} that
\[
\mathbb{P}\big(Y \geq  n\gamma\delta + \tau\big) \leq \exp\Big\{
\frac{-9/2\cdot  (n\gamma\delta)^2}{3 n\gamma\delta}\Big\} \leq \exp(-n\gamma\delta) \leq 1/n.
\]
Hence, we are done.
 \end{proof}

\vspace{1em}
Next, we aim to bound $\sigma_{\min}\{(\tilde{M}^{(s)})^\top\tilde{M}^{(t)}\}$.

\begin{lemma} \label{lem:membership_minsig}
Given Assumption \ref{ass:cluster_size}, 
consider particular time indices $s,t\in [0,1]$.
Define $h = L(M^{(s)},M^{(t)})$.
Then, for any two community matrices $M^{(s)}$ and $M^{(t)}$
and their column-normalized versions $\tilde{M}^{(s)}$ and $\tilde{M}^{(t)}$,
\[
\sigma_{\min}\big\{(\tilde{M}^{(s)})^\top\tilde{M}^{(t)}\big\} \geq 1 - ch^{1/2},
\]
where $c = (2c_2K)^{1/2}+c_2^{3/2}K/2$.
\end{lemma}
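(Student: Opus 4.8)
The plan is to realise $N := (\tilde M^{(s)})^\top \tilde M^{(t)}$ as the matrix of cosines of the principal angles between the column spaces of the two membership matrices, and to show those spaces are within $O(h^{1/2})$ in $\sin\Theta$-distance, where $h = L(M^{(s)},M^{(t)})$. First I would reduce to the case where the optimal permutation $R(M^{(s)},M^{(t)})$ from \eqref{eq:hungarian_assignment_population} is the identity: right-multiplying $\tilde M^{(t)}$ by a permutation matrix is an orthogonal transformation that commutes with the column normalisation, so it leaves every singular value of $N$ unchanged. Under this reduction the confusion matrix $C = (M^{(s)})^\top M^{(t)}$ satisfies $\tfrac1n\sum_k C_{kk} = 1-h$, i.e.\ exactly $nh$ nodes carry different labels at times $s$ and $t$; writing $a_k := n_k^{(s)} - C_{kk} = \sum_{\ell\neq k}C_{k\ell} \ge 0$ and $b_k := n_k^{(t)} - C_{kk} = \sum_{\ell\neq k}C_{\ell k} \ge 0$ for the numbers of label changes out of and into community $k$, one has $\sum_k a_k = \sum_k b_k = nh$, and $N_{k\ell} = C_{k\ell}/\sqrt{n_k^{(s)}n_\ell^{(t)}}$.

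Next I would use that both $\tilde M^{(s)}$ and $\tilde M^{(t)}$ have orthonormal columns. Writing $\Pi^{(s)} = \tilde M^{(s)}(\tilde M^{(s)})^\top$ and $\tilde M^{(t)} = \tilde M^{(s)}N + B$ with $B := (I - \Pi^{(s)})\tilde M^{(t)}$, the summands $\tilde M^{(s)}N$ and $B$ are columnwise orthogonal (since $(\tilde M^{(s)})^\top(I-\Pi^{(s)}) = 0$), so $I_K = (\tilde M^{(t)})^\top\tilde M^{(t)} = N^\top N + B^\top B$; taking the smallest eigenvalue gives $\sigma_{\min}(N)^2 = 1 - \|B\|_{\op}^2 \ge 1 - \|B\|_F^2$, and taking the trace gives $\|B\|_F^2 = K - \|N\|_F^2 = \sum_{\ell=1}^K\bigl(1 - \sum_k N_{k\ell}^2\bigr)$. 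For each $\ell$ I would keep only the diagonal term, $\sum_k N_{k\ell}^2 \ge N_{\ell\ell}^2 = (1 - a_\ell/n_\ell^{(s)})(1 - b_\ell/n_\ell^{(t)}) \ge 1 - a_\ell/n_\ell^{(s)} - b_\ell/n_\ell^{(t)}$, and then invoke the lower bound $n_k^{(\cdot)} \ge n/(c_2K)$ from Assumption \ref{ass:cluster_size} to get $1 - \sum_k N_{k\ell}^2 \le (c_2K/n)(a_\ell + b_\ell)$. Summing over $\ell$ yields $\|B\|_F^2 \le (c_2K/n)\sum_\ell(a_\ell + b_\ell) = 2c_2Kh$, hence $\sigma_{\min}(N) \ge \sqrt{1 - \|B\|_F^2} \ge 1 - \|B\|_F \ge 1 - (2c_2K)^{1/2}h^{1/2}$ (the inequality being trivial whenever its right-hand side is negative, since $\sigma_{\min}(N)\ge 0$).

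This already proves the statement; to land on exactly the constant $c = (2c_2K)^{1/2} + c_2^{3/2}K/2$ as written, I would instead split $N = \diag(N) + N_{\mathrm{off}}$ and apply Weyl's inequality, giving $\sigma_{\min}(N) \ge \min_k N_{kk} - \|N_{\mathrm{off}}\|_{\op}$. For the diagonal part I would use $(C_{kk}+a_k)(C_{kk}+b_k) \le (C_{kk} + \tfrac12(a_k+b_k))^2$ to get $1 - N_{kk} \le \tfrac12(a_k+b_k)/\sqrt{n_k^{(s)}n_k^{(t)}}$, and then convert the $h$ into an $h^{1/2}$ by pairing the global bound $a_k,b_k \le nh$ with the upper bound $n_k^{(\cdot)} \le c_2 n/K$ from Assumption \ref{ass:cluster_size}; for the off-diagonal part I would use $\|N_{\mathrm{off}}\|_{\op}\le\|N_{\mathrm{off}}\|_F$ together with the same per-entry accounting of $C_{k\ell}$ through $a_k$, and then keep constants explicit. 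The only genuinely delicate step is the first one — verifying that the reduction to identity alignment is legitimate and that, after it, $nh$ is exactly the number of label changes — and then keeping the $a_k,b_k$ bookkeeping straight through each inequality; everything after that is routine algebra.
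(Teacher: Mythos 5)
Your argument is correct, and it takes a genuinely different route from the paper's. The paper reduces to the identity permutation exactly as you do, but then proceeds by perturbation: it writes $\sigma_{\min}((\tilde M_0)^\top\tilde M_1)\ge 1-\|\tilde M_1-\tilde M_0\|_{\op}$ and controls $\|\tilde M_1-\tilde M_0\|_{\op}$ via the triangle inequality on $M\Delta^{-1/2}$, which forces it to bound $\|M_1-M_0\|_{\op}\le(2nh)^{1/2}$ and, separately, $\|\Delta_1^{-1/2}-\Delta_0^{-1/2}\|_{\op}\le nh/(2n_{\min}^{3/2})$; the two pieces produce the two summands in $c=(2c_2K)^{1/2}+c_2^{3/2}K/2$. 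You instead exploit the exact Pythagorean identity $N^\top N+B^\top B=I_K$ with $B=(I-\Pi^{(s)})\tilde M^{(t)}$, reduce everything to the trace $K-\|N\|_F^2$, and account for it entry-by-entry through the confusion matrix. This buys you a cleaner argument (no separate control of the normalization matrices, since the $\Delta$'s are absorbed into $N_{\ell\ell}$ directly) and a strictly smaller constant $(2c_2K)^{1/2}$, which of course implies the lemma as stated since $1-(2c_2K)^{1/2}h^{1/2}\ge 1-ch^{1/2}$. Your reduction to the identity permutation is legitimate for the reason you give ($\tilde{M^{(t)}R}=\tilde M^{(t)}R$, so $N\mapsto NR$ preserves singular values), and after it $n-\sum_k C_{kk}=nh$ is indeed the number of relabelled nodes. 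The second half of your proposal, aimed at reproducing the paper's exact constant via Weyl's inequality on $\diag(N)+N_{\mathrm{off}}$, is only sketched and is unnecessary; I would drop it and present the first argument, noting explicitly that a sharper constant suffices.
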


\begin{proof}
Observe that for any permutation matrix $R \in \mathbb{Q}_K$,
\[
\sigma_{\min}\big\{(\tilde{M}^{(s)})^\top\tilde{M}^{(t)}\big\}  = \sigma_{\min}\big\{(\tilde{M}^{(s)})^\top\tilde{M}^{(t)}R\big\}.
\]
Hence, for notational convenience, let $M_0 = M^{(s)}$, $\Delta_0 = \Delta^{(s)}$  denote a diagonal matrix where the diagonal entries denote the column sum of $M_0$. Additionally, let
\[
M_1 = M^{(t)} R', \quad \text{such that }R' = \min_{R \in \mathbb{Q}_K}\|M^{(s)} - M^{(t)} R\|_0,
\]
and $\Delta_1$ denote the diagonal matrix where the diagonal entries denote the column sum of $M_1$.
Hence, $\tilde{M}_0 = M_0 (\Delta_0)^{-1/2}$
and $\tilde{M}_1 = M_1 (\Delta_1)^{-1/2}$.
Then,
\begin{align}
\sigma_{\min}\big\{(\tilde{M}^{(s)})^\top\tilde{M}^{(t)}\big\} &= 
\sigma_{\min}\big\{(\tilde{M}_0)^\top\tilde{M}_1\big\} =
\sigma_{\min}\big\{(\tilde{M}_0)^\top\tilde{M}_0+ (\tilde{M}_0)^\top(\tilde{M}_1 - \tilde{M}_0)\big\} \nonumber \\
&\overset{(i)}{\geq} 1 - \sigma_{\max}\big\{(\tilde{M}_0)^\top(\tilde{M}_1- \tilde{M}_0)\big\} \overset{(ii)}\geq 1 - \big\|\tilde{M}_1 - \tilde{M}_0\big\|_{\op}, \label{eq:singular_value_membership_lowerbound}
\end{align}
where $(i)$ holds since the spectral radius of $I+A$ for an identity matrix $I$ and arbitrary $A$ is contained within $1 \pm \|A\|_{\op}$ and $(ii)$ holds by submultiplicativity of the spectral norm. Since $\tilde{M}_0= M_0 \Delta_0^{-1/2}$ and $\tilde{M}_1 = M_1 \Delta_1^{-1/2}$, we additionally observe
\begin{align}
\big\|\tilde{M}_1 - \tilde{M}_0\big\|_{\op} &= \big\|M_1 \Delta_1^{-1/2} - M_0 \Delta_0^{-1/2} 
\pm M_0 \Delta_1^{-1/2}\big\|_{\op} \nonumber \\ 
&\leq \big\|(M_1 - M_0) \Delta_1^{-1/2}\big\|_{\op} + \big\| M_0 (\Delta_1^{-1/2}- \Delta_0^{-1/2})\big\|_{\op} \nonumber \\
&\leq  \big\|M_1 - M_0\big\|_{\op}\| \Delta_1^{-1/2}\|_{\op} + 
\| M_0 \|_{\op}\big\|\Delta_1^{-1/2}- \Delta_0^{-1/2}\big\|_{\op}\label{eq:lem:membership_minsig:1}
\end{align}

To bound $\big\|M_1 - M_0\big\|_{\op}$, observe that $\|M_1 - M_0\|_0 = 2nh$ thanks to our permutation of columns above via $R'$.
Rearrange the rows of $M_1-M_0$ such that the first $nh$ rows of $M_1-M_0$ have one 1 and one -1 in each row (and all remaining values are 0) and the remaining rows of $M_1-M_0$ are all 0's.
Then, consider the matrix $(M_1-M_0)(M_1-M_0)^\top$, where the top-left $nh\times nh$ submatrix has values $\{0, 1, 2\}$ in absolute value. Let this submatrix be called $E$. Then,
\[
\lambda_{\max}\big\{(M_1-M_0)(M_1-M_0)^\top\big\} = \lambda_{\max}(E) \overset{(i)}{\leq} 2nh,
\]
where $(i)$ is an upper-bound relying on the maximum value of $E$.
Therefore, we have shown that $\|M_1-M_0\|_{\op} \leq (2nh)^{1/2}$.

Let $n_{\min} = n/(c_2K)$ be defined as the smallest allowable community size, as specified by 
Assumption \ref{ass:cluster_size}.
To bound $\|\Delta_1^{-1/2} - \Delta_0^{-1/2}\|_{\op}$, consider a particular community $k\in\{1,\ldots K\}$. Observe that
\begin{align*}
n_{1,k}^{-1/2} - n_{0,k}^{-1/2}  &= \frac{1}{n_{1,k}^{1/2}} - \frac{1}{n_{0,k}^{1/2}}  =
\frac{n_{1,k}^{1/2} - n_{0,k}^{1/2}}{(n_{1,k}n_{0,k})^{1/2}} = \frac{n_{1,k} - n_{0,k}}{(n_{1,k} n_{0,k})^{1/2} (n_{1,k}^{1/2} + n_{0,k}^{1/2})}\leq \frac{nh}{2n_{\min}^{3/2}}.
\end{align*}
This means that $\|\Delta_1^{-1/2} - \Delta_0^{-1/2}\|_{\op} \leq nh/(2n_{\min}^{3/2})$.

Plugging our results into \eqref{eq:lem:membership_minsig:1}, we have
\[
\big\|\tilde{M}_1 - \tilde{M}_0\big\|_{\op}  \leq (2nh)^{1/2}\cdot \frac{1}{n_{\min}^{1/2}} + n_{\max}^{1/2} \cdot
\frac{nh}{2n_{\min}^{3/2}} 
\overset{(i)}{\leq} \Big\{(2c_2K)^{1/2} + \frac{c_2^{3/2}K}{2}\Big\} \cdot h^{1/2}
\]
where $(i)$ holds from Assumption \ref{ass:cluster_size} and recalling that $h\leq 1$. Plugging this into \eqref{eq:singular_value_membership_lowerbound}, we are done.
\end{proof}

\vspace{1em}
Next, we aim to bound the spectral difference between $\tilde{M}^{(s)}(\tilde{M}^{(s)})^\top$ and
$\tilde{M}^{(t)}(\tilde{M}^{(t)})^\top$

\begin{lemma} \label{lem:membership_projection_op}
For any two membership matrices $M^{(s)}$ and $M^{(t)}$,
\[
\Big\|\tilde{M}^{(s)}(\tilde{M}^{(s)})^\top - \tilde{M}^{(t)}(\tilde{M}^{(t)})^\top\Big\|_{\op} \leq 2c h^{1/2},
\]
where $c$ and $h$ are defined in Lemma \ref{lem:membership_minsig}.
\end{lemma}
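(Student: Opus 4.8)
The plan is to reuse the telescoping identity from step $(i)$ of \eqref{eq:rate_term_1} together with the intermediate bound already established inside the proof of Lemma \ref{lem:membership_minsig}. First I would observe that the projection matrix $\tilde M^{(t)}(\tilde M^{(t)})^\top$ is unchanged under a permutation of the columns of $M^{(t)}$: for any permutation matrix $R \in \mathbb{Q}_K$ we have $\tilde M^{(t)} R R^\top (\tilde M^{(t)})^\top = \tilde M^{(t)}(\tilde M^{(t)})^\top$ since $RR^\top = I_K$. Hence, adopting the notation $M_0 = M^{(s)}$, $M_1 = M^{(t)} R'$, $\tilde M_0 = M_0 \Delta_0^{-1/2}$, $\tilde M_1 = M_1 \Delta_1^{-1/2}$ from the proof of Lemma \ref{lem:membership_minsig}, where $R'$ is the permutation realizing $\min_{R}\|M^{(s)} - M^{(t)}R\|_0$, it suffices to bound $\|\tilde M_0 \tilde M_0^\top - \tilde M_1 \tilde M_1^\top\|_{\op}$.

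Next I would write the difference as $\tilde M_0 \tilde M_0^\top - \tilde M_1 \tilde M_1^\top = \tilde M_0(\tilde M_0 - \tilde M_1)^\top + (\tilde M_0 - \tilde M_1)\tilde M_1^\top$, exactly the $ADA^\top - BDB^\top$ identity used in \eqref{eq:rate_term_1} with $D = I$. Taking operator norms, using submultiplicativity, and using that $\tilde M_0$ and $\tilde M_1$ have orthonormal columns so $\|\tilde M_0\|_{\op} = \|\tilde M_1\|_{\op} = 1$, gives
\[
\big\|\tilde M_0 \tilde M_0^\top - \tilde M_1 \tilde M_1^\top\big\|_{\op} \le 2\,\big\|\tilde M_0 - \tilde M_1\big\|_{\op}.
\]

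Finally, the bound $\|\tilde M_1 - \tilde M_0\|_{\op} \le c\, h^{1/2}$ with $c = (2c_2 K)^{1/2} + c_2^{3/2}K/2$ and $h = L(M^{(s)}, M^{(t)})$ was already derived in the displayed chain culminating just before \eqref{eq:singular_value_membership_lowerbound} in the proof of Lemma \ref{lem:membership_minsig} (from the split $\tilde M_1 - \tilde M_0 = (M_1 - M_0)\Delta_1^{-1/2} + M_0(\Delta_1^{-1/2} - \Delta_0^{-1/2})$, the crude bounds $\|M_1 - M_0\|_{\op} \le (2nh)^{1/2}$ and $\|\Delta_1^{-1/2} - \Delta_0^{-1/2}\|_{\op} \le nh/(2n_{\min}^{3/2})$, and the community-size control of Assumption \ref{ass:cluster_size}). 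Substituting into the previous display yields $\|\tilde M^{(s)}(\tilde M^{(s)})^\top - \tilde M^{(t)}(\tilde M^{(t)})^\top\|_{\op} \le 2c\,h^{1/2}$, as claimed.

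There is no genuine obstacle in this argument; the only point requiring a moment's care is the permutation-invariance of the projection, which lets us re-use the same optimal permutation $R'$ (and hence the same quantity $h$) from Lemma \ref{lem:membership_minsig}, together with the observation that $L(\cdot,\cdot)$ is symmetric so that the $h$ appearing in the two lemmas coincide.
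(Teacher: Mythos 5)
Your proof is correct, but it takes a genuinely different route from the paper's. The paper proves this lemma by invoking the generic relation between a difference of projections and the minimal singular value of the cross-Gram matrix, $\|\tilde M_1\tilde M_1^\top-\tilde M_0\tilde M_0^\top\|_{\op}\le 2(1-\sigma_{\min}^2(\tilde M_1^\top\tilde M_0))^{1/2}$ (citing Lemma 1 of \cite{cai2018rate}), and then plugs in the \emph{statement} of Lemma \ref{lem:membership_minsig}. You instead bypass the $\sin\Theta$ machinery entirely: you use the telescoping identity $AA^\top-BB^\top=A(A-B)^\top+(A-B)B^\top$ with $\|\tilde M_0\|_{\op}=\|\tilde M_1\|_{\op}=1$, and then reuse the \emph{intermediate} display $\|\tilde M_1-\tilde M_0\|_{\op}\le ch^{1/2}$ from inside the proof of Lemma \ref{lem:membership_minsig}. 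Both routes are legitimate, and your permutation-invariance observation (needed so that the same $R'$ and the same $h$ can be recycled) is correct since $(R'^\top\Delta^{(t)}R')^{-1/2}=R'^\top(\Delta^{(t)})^{-1/2}R'$. What your approach buys is worth emphasizing: it delivers the stated rate $2ch^{1/2}$ cleanly, whereas the paper's final algebraic step $2(1-(1-ch^{1/2})^2)^{1/2}\le 2(1-(1-c^2h))^{1/2}$ requires $(1-ch^{1/2})^2\ge 1-c^2h$, which is equivalent to $ch^{1/2}\ge 1$ and therefore fails precisely in the interesting regime $ch^{1/2}<1$ (there the $\sin\Theta$ route only gives order $h^{1/4}$). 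So your more elementary decomposition is arguably the correct way to obtain the lemma as stated, at the cost of depending on an internal step of another proof rather than only on its conclusion.
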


\begin{proof}
For notational convenience, let $M_0 = M^{(s)}$ and $M_1 = M^{(t)}$.
We will invoke properties about the distance between two orthonormal matrices (see Lemma 1 from \cite{cai2018rate} for example). Specifically,
\[
\Big\|\tilde{M}_1(\tilde{M}_1)^\top - \tilde{M}_0(\tilde{M}_0)^\top\Big\|_{\op} \leq 2 \cdot
\Big\{1 - \sigma^2_{\min}(\tilde{M}_1^\top \tilde{M}_0)\Big\}^{1/2}.
\]
Hence, we can invoke Lemma \ref{lem:membership_minsig} to finish the proof,
\begin{align*}
\Big\|\tilde{M}_1(\tilde{M}_1)^\top - \tilde{M}_0(\tilde{M}_0)^\top\Big\|_{\op} &\leq 2\cdot \big\{1 - (1-ch^{1/2})^2\big\}^{1/2} \overset{(i)}{\leq}
2\cdot \big\{1 - (1 - c^2h)\big\}^{1/2}
= 2 ch^{1/2},
\end{align*}
where $(i)$ holds since if $a,b>0$, then $(a-b)^2 \leq |(a-b)(a+b)| = |a^2 - b^2|$.
\end{proof}

\vspace{1em}
\begin{lemma} \label{lem:probability_op}
Given Assumption \ref{ass:cluster_size}, for any membership matrix $M^{(t)}$, connectivity matrix $B^{(t)}$ and sparsity $\rho_n$, 
\[
\big\|Q^{(t)}\big\|_{\op} \leq c \rho_n n.
\]
for some constant $c$ that depends on $c_2$ and $K$.
\end{lemma}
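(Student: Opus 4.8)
The plan is to use submultiplicativity of the operator norm on the factorization $Q^{(t)} = \rho_n\, M^{(t)} B^{(t)} (M^{(t)})^\top$ from \eqref{eq:model:qt}, so that
\[
\big\|Q^{(t)}\big\|_{\op} \leq \rho_n \big\|M^{(t)}\big\|_{\op}^2\, \big\|B^{(t)}\big\|_{\op},
\]
and then bound each of the two remaining factors separately.

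First I would control $\|M^{(t)}\|_{\op}$ using the one-hot structure of $M^{(t)}$: since each node belongs to exactly one community, the $K$ columns of $M^{(t)}$ have pairwise disjoint supports, hence are mutually orthogonal, with column $k$ having squared Euclidean norm $n_k^{(t)}$. Therefore $\|M^{(t)}\|_{\op}^2 = \max_k n_k^{(t)}$, which is at most $c_2 n / K$ on the event guaranteed by Assumption \ref{ass:cluster_size} (and at most $n$ unconditionally). Second, since $B^{(t)} \in [0,1]^{K\times K}$ has all entries bounded by $1$, a crude bound gives $\|B^{(t)}\|_{\op} \leq \|B^{(t)}\|_F \leq K$. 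Combining the two bounds yields $\|Q^{(t)}\|_{\op} \leq \rho_n \cdot (c_2 n / K) \cdot K = c_2 \rho_n n$, so $c = c_2$ suffices (or $c = K$ if one prefers to avoid conditioning on the community-size event).

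The argument is entirely elementary and there is no genuine obstacle; the only point requiring a small amount of care is the bookkeeping about whether to state the conclusion on the high-probability event of Assumption \ref{ass:cluster_size} — which is how the lemma is in fact invoked inside the proof of Theorem \ref{thm:consistency}, where the analysis is already restricted to the event $\mathcal{E}$ containing $\mathcal{E}_2$ — or to state it unconditionally with the slightly larger constant $K$ coming from the trivial bound $n_{\max}^{(t)} \leq n$. In either case the stated form, with $c$ depending only on $c_2$ and $K$, follows immediately.
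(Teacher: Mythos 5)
Your proof is correct and follows essentially the same route as the paper's: factor $Q^{(t)} = \rho_n M^{(t)} B^{(t)} (M^{(t)})^\top$, apply submultiplicativity of the operator norm, bound $\|M^{(t)}\|_{\op}^2$ by the maximum community size via Assumption \ref{ass:cluster_size}, and bound $\|B^{(t)}\|_{\op} \leq K$ from the entrywise bound. Your version is in fact slightly more explicit than the paper's (which does not spell out the orthogonality of the columns of $M^{(t)}$), and your remark about conditioning on the event of Assumption \ref{ass:cluster_size} matches how the lemma is actually invoked inside the proof of Theorem \ref{thm:consistency}.
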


\begin{proof}
Let $c$ be a constant that can vary from term to term, depending only on the constants $c_2$ and $K$. 
Defining $n_{\max} = c n$ as defined in Assumption \ref{ass:cluster_size} as the maximum cluster size, we have that
\[
\big\|Q^{(t)}\big\|_{\op} = \big\|\rho_n M^{(t)}B^{(t)}(M^{(t)})^\top\big\|_{\op} \leq c\rho_n n,
\]
via the submultiplicativity of the spectral norm and the fact that $\|B^{(t)}\|_{\op} \leq K$ since $B^{(t)} \in [0,1]^{K \times K}$.
\end{proof}

\vspace{1em}
Below, we upper-bound the probability that each community size stays within a certain size for a two-community model where each community is initialized to be the same size.
\begin{lemma} \label{lem:recursive_decomposition}
Assume a two-community model  (i.e., $K=2$)  following the model described in Section \ref{ss:consistency} (using Assumption \ref{ass:bernoulli} instead of Assumption \ref{ass:poisson}), where each community is initialized to have equal community sizes. 
Then, with probability at least $1-1/T$, each community's size will stay within
\[
\Big[
\frac{n}{2} - c\cdot \max[\{n \gamma \log(T)\}^{1/2}, \log(T)], 
\frac{n}{2} + c\cdot \max[\{n \gamma \log(T)\}^{1/2},\log(T)]
\Big],
\]
for some universal constant $c$, for all $t \in \mathcal{T}$.
\end{lemma}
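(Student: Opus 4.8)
The plan is to use that, for $K=2$ under Assumption~\ref{ass:bernoulli}, the membership trajectory of each node is an independent two-state Markov chain with flip probability $p := \gamma/T$, so that the two community sizes are sums of independent Bernoulli variables; the statement then reduces to Bernstein's inequality together with a union bound, the essential observation being that the variance accumulated over all $T$ steps is capped at $n\gamma$ rather than at the stationary value $n/4$.

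Concretely, I would set $N_t := n_1^{(t)} = \sum_{i=1}^n \mathbf{1}\{m_i^{(t)} = 1\}$. Under Assumption~\ref{ass:bernoulli} the processes $\{m_i^{(\cdot)}\}_{i=1}^n$ are mutually independent, each governed by a two-state transition matrix with eigenvalues $1$ and $1-2p$, so after the $tT$ steps from time $0$ to time $t$ a node started in community $1$ lies in community $1$ with probability $\tfrac12\bigl(1 + (1-2p)^{tT}\bigr)$, and one started in community $2$ with probability $\tfrac12\bigl(1 - (1-2p)^{tT}\bigr)$. Since exactly $n/2$ nodes start in each community, $\mathbb{E}[N_t] = n/2$ for every $t$, and because $n_2^{(t)} = n - N_t$ it suffices to control $|N_t - n/2|$ uniformly over $t \in \mathcal{T}$.

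The crucial step is the variance estimate $\mathrm{Var}(N_t) = \tfrac n4\bigl(1 - (1-2p)^{2tT}\bigr)$. Using $(1-x)^m \ge 1 - mx$ on $[0,1]$ when $p \le 1/2$ (and the trivial bound $1 - (1-2p)^{2tT} \le 1 \le 4p\,tT$ when $p > 1/2$, since then $p\,tT \ge 1/2$), one gets $1 - (1-2p)^{2tT} \le 4p\,tT \le 4pT = 4\gamma$, so $\mathrm{Var}(N_t) \le n\gamma$ for all $t \in \mathcal{T}$. Since the indicators $\mathbf{1}\{m_i^{(t)}=1\}$ are independent and take values in $[0,1]$, Bernstein's inequality gives $\mathbb{P}\bigl(|N_t - n/2| \ge u\bigr) \le 2\exp\!\bigl(-\tfrac{u^2/2}{\,n\gamma + u/3\,}\bigr)$. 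Taking $u = 4\max\bigl\{\sqrt{n\gamma\log T},\ \log T\bigr\}$ and treating the regimes $n\gamma \ge \log T$ and $n\gamma < \log T$ separately makes the exponent at least $\tfrac{24}{7}\log T$ in both cases, so this probability is at most $T^{-2}$ (using $T \ge 3$ from Assumption~\ref{ass:asymptotic}). A union bound over the $T$ points of $\mathcal{T}$ then yields, with probability at least $1 - 1/T$, that $|N_t - n/2| < u$ and therefore $n_1^{(t)}, n_2^{(t)} \in [\,n/2 - u,\ n/2 + u\,]$ for all $t \in \mathcal{T}$, which is the asserted bound with $c = 4$.

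The only mildly delicate point is the Bernstein case analysis --- verifying that a single universal prefactor (here $4$) is large enough in both the sub-Gaussian regime $u \asymp \sqrt{n\gamma\log T}$ and the sub-exponential regime $u \asymp \log T$, and that the variance estimate is genuinely uniform in $t$. I anticipate no real obstacle: the argument uses only $\gamma \le T$ so that $p \in [0,1]$, and it does not need $\gamma/T = o(1)$. A more laborious alternative would track the centered sequence $D_t = n_1^{(t)} - n/2$, which satisfies $\mathbb{E}[D_{t+1/T}\mid \mathcal{F}_t] = (1-2p)D_t$ and is thus a contracting weighted sum of conditionally mean-zero increments of conditional variance at most $np$, and invoke Freedman's martingale inequality; the fully independent representation above makes such a recursion unnecessary.
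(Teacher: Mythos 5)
Your proof is correct, but it takes a genuinely different route from the paper's. The paper runs a recursive martingale argument: it sets $Z_t = N_t - n/2$, notes the contraction $\mathbb{E}[Z_t\mid\mathcal{F}_{t'}] = (1-2\gamma/T)Z_{t'}$, expands $Z_t$ as a geometrically weighted sum of conditionally centered increments, and iterates a conditional MGF bound (each increment contributing a factor $\exp\bigl(n\beta c^2/(1-c)\bigr)$ with $\beta=\gamma/T$) to get a Chernoff-type tail $\mathbb{P}(|Z_t|\ge\tau)\le 2\exp\bigl(-\tau^2/(4n\gamma+2\tau)\bigr)$, followed by the same union bound over $\mathcal{T}$. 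You instead exploit the full independence across nodes to write $N_t$ as a sum of $n$ independent indicators whose marginals are exactly computable from the two-state chain ($\tfrac12(1\pm(1-2p)^{tT})$), observe that the mean is exactly $n/2$ by the symmetric initialization, and bound the variance by $n\gamma$ via $1-(1-2p)^{2tT}\le 4ptT\le 4\gamma$ before applying standard Bernstein. Both arguments hinge on the same key quantitative fact --- the variance accumulates like $n\gamma$ rather than saturating at $n/4$ --- and both land on the same threshold $\tau\asymp\max\{\sqrt{n\gamma\log T},\log T\}$ and the same $1-1/T$ guarantee. What your version buys is simplicity and an explicit constant; what it costs is generality: it leans on $K=2$ with symmetric flips so that the per-node marginal and the exact mean $n/2$ are available in closed form, whereas the paper's martingale recursion tracks the drift automatically through the contraction factor and (as the authors remark) is the form they expect to extend to more communities and asymmetric transition kernels. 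Your closing remark correctly identifies the martingale/Freedman route as the alternative; it is in fact the one the paper takes.
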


As a note, observe that since each node changes memberships with probability $\gamma/T$ for each discrete non-overlapping time interval of length $1/T$, each node will have $\gamma$ events between $t=0$ and $t=1$ on average. Hence, $n\gamma$ is the mean number of total membership changes across all nodes and all time.

\begin{proof}
We wish to bound the community size uniformly across all time $t \in \mathcal{T}\backslash\{1\}$. Let $N_t$ denote the number of nodes in Community 1 at time $t$. For $t \in \mathcal{T}$ where $t > 1/T$, let $t' = t-1/T$ and $\mathcal{F}_{t'}$ denote the filtration of the last time prior to $t$ where $F_{0} = \emptyset$. 
Observe for $t \in \mathcal{T}$, due to the two-community setup,
\begin{equation}\label{eq:lower_bound1}
\mathbb{E}\big(N_t | \mathcal{F}_{t'}\big) = N_{t'}\cdot \big(1-\frac{\gamma}{T}\big) + \big(n - N_{t'}\big)\cdot \frac{\gamma}{T},
\end{equation}
where $N_0 = n/2$.
Let $Z_t = N_t-n/2$ denote the size of Community 1 deviates from parity. Certainly, $Z_t$ is a symmetric random variable around 0 since both communities are initialized with equal sizes.
Our goal is show that $Z_t$ is concentrated near 0 for all $t \in \mathcal{T}$ with high probability
under the provided assumptions.

Towards this end, let $\alpha = 1-2\gamma/T$ and $\beta = \gamma/T$.
Observe that from \eqref{eq:lower_bound1} and the definition of $Z_t$,
\begin{equation}\label{eq:lower_bound2}
\mathbb{E}\big(Z_t | \mathcal{F}_{t'}\big) = \big(1- \frac{2\gamma}{T}\big) \cdot Z_{t'} = \alpha \cdot Z_{t'}.
\end{equation}
where for $Z_0 =0$.
We can think of $\alpha$ as a factor that shrinks $Z_{t'}$ towards 0 (i.e., equal community sizes).
Define 
\begin{equation}\label{eq:lower_bound3}
M_t = Z_t - \mathbb{E}(Z_t | \mathcal{F}_{t'}) = Z_t - \alpha Z_{t'}, \quad \text{for}\quad t \in \mathcal{T}.
\end{equation}
as the deviation of the expected size of Community 1 from its expectation at time $t$.
 Recalling the functional form of centered Bernoulli's, observe that from 
\eqref{eq:lower_bound1} and \eqref{eq:lower_bound2}, 
\begin{equation}\label{eq:lower_bound3b}
M_t | \mathcal{F}_{t'} \overset{d}{=} \sum_{i=1}^{n} \xi_{i,t}
\end{equation}
where
\[
\text{if } i \in \{1,\ldots,N_{t'}\}, \quad \text{then }\xi_{i,t} = \begin{cases}
\beta &\quad \text{with probability }1-\beta\\
-(1-\beta) &\quad \text{with probability }\beta
\end{cases},
\]
and
\[
\text{if } i \in \{N_{t'}+1, \ldots, n\}, \quad \text{then }
\xi_{i,t} = \begin{cases}
1-\beta &\quad \text{with probability }\beta\\
-\beta &\quad \text{with probability }1-\beta
\end{cases}.
\]
Without loss of generality, let $t_1 = t'= t - 1/T$, $t_2 = t - 2/T, \ldots, t_S = 1/T$ for $S = t(T-1)-1$.
Hence, $t_1 > t_2 > \ldots > t_S$, meaning $t_S$ is the earliest time, and $t_1$ is the latest time.
Then, building upon a recursive decomposition for \eqref{eq:lower_bound3},
\begin{equation}\label{eq:lower_bound4}
Z_t = M_t + \alpha M_{t_1} + \alpha^2 M_{t_2} + \ldots + \alpha^{S} M_{t_S},
\end{equation}
recalling that $M_{t_S} = M_{1/T} = 0$ by our definitions.

We seek a Chernoff-like argument. Observe that for any $c > 0$,
\begin{align}
\mathbb{E}\big(e^{cZ_t}\big) &= \mathbb{E}\big\{e^{c(M_t + \alpha M_{t_1} + \alpha^2 M_{t_2} + \ldots + \alpha^{S} M_{t_S})}\big\} \nonumber \\
&= \mathbb{E}\Big[\mathbb{E}\big\{e^{c(M_t + \alpha M_{t_1} + \alpha^2 M_{t_2} + \ldots + \alpha^{S} M_{t_S})} | \mathcal{F}_{t_1}\big\}\Big] \nonumber\\
&= \mathbb{E}\Big[\mathbb{E}\big\{e^{cM_t} | \mathcal{F}_{t_1}\big\} e^{\alpha M_{t_1} + \alpha^2 M_{t_2} + \ldots + \alpha^{S} M_{t_S}}\Big]. \label{eq:lower_bound5}
\end{align}

Analyzing the first term on the RHS of \eqref{eq:lower_bound5}, provided that $c<1$,
\begin{align}
\mathbb{E}\big(e^{cM_t} | \mathcal{F}_{t_1}\big) &\overset{(i)}{=} \prod_{i=1}^{n} \mathbb{E}e^{c \xi_{i,t}}\nonumber \\
&= \prod_{i=1}^{n}\Big\{ 1 + c \mathbb{E}\big(\xi_{i,t}\big) + \sum_{k=2}^{\infty} \mathbb{E}\big(\frac{1}{k!}c^k \xi^k_{i,t}\big)\Big\} \nonumber \\
&\overset{(ii)}{=}\prod_{i=1}^{n}\Big(1+ \sum_{k=2}^{\infty}c^k \beta\Big) = 
\prod_{i=1}^{n}\Big(1+ \frac{\beta c^2}{1-c}\Big) \overset{(iii)}{\leq} \exp\big(\frac{n\beta c^2}{1-c}\big). \label{eq:lower_bound6}
\end{align}
where $(i)$ holds from \eqref{eq:lower_bound3b},  $(ii)$ holds since $\mathbb{E}(\xi_{i,t}) = 0$ and $\mathbb{E}(|\xi_{i,t}|^k) \leq \beta = \gamma/T$, and $(iii)$ holds since $\exp(x) \geq 1+x$.
Combining \eqref{eq:lower_bound6} with \eqref{eq:lower_bound5}, we obtain
\begin{align}
\mathbb{E}\big(e^{cZ_t}\big) &\leq e^{\frac{n\beta c^2}{1-c}} \cdot \mathbb{E}\big\{
e^{c(\alpha M_{t_1} + \alpha^2 M_{t_2} + \ldots + \alpha^{S} M_{t_S})}
\big\} \nonumber \\ 
&\overset{(iv)}{=} 
e^{\frac{n\beta c^2}{1-c}} \cdot \mathbb{E}\Big\{\mathbb{E}\big(
e^{c\alpha M_{t_1}}|\mathcal{F}_{t_2} 
\big)
e^{c(\alpha^2 M_{t_2} + \ldots + \alpha^{S} M_{t_S})}
\Big\} \nonumber \\
&\overset{(v)}{\leq}
e^{\frac{n\beta c^2}{1-c}} \cdot \mathbb{E}\Big\{\mathbb{E}\big(
e^{c M_{t_1}}|\mathcal{F}_{t_2} 
\big)^\alpha
e^{c(\alpha^2 M_{t_2} + \ldots + \alpha^{S} M_{t_S})}
\Big\}  \nonumber \\
&\overset{(vi)}{\leq}
e^{\frac{n\beta c^2}{1-c}} \cdot \mathbb{E}\Big\{
\big(e^{\frac{n\beta c^2}{1-c}} \big)^\alpha
e^{c(\alpha^2 M_{t_2} + \ldots + \alpha^{S} M_{t_S})}
\Big\} \nonumber\\
&\leq
e^{\frac{(1+\alpha) n\beta c^2}{1-c}} \cdot \mathbb{E}\Big\{
e^{c(\alpha^2 M_{t_2} + \ldots + \alpha^{S} M_{t_S})}
\Big\}, \label{eq:lower_bound7}
\end{align}
where $(iv)$ holds by an argument analogous to \eqref{eq:lower_bound5},
$(v)$ holds by Jensen's inequality since $f(x) = x^\alpha$ is concave for $\alpha \in (0,1)$,
$(vi)$ holds by an argument analogous to \eqref{eq:lower_bound6}.
Repeating the argument for \eqref{eq:lower_bound7} a total for $S$ times (recalling that $\alpha \in (0,1)$) yields our desired inequality
\begin{equation} \label{eq:lower_bound8}
\mathbb{E}\big(e^{cZ_t}\big) \leq e^{\frac{(1+\alpha + \alpha^2 + \ldots + \alpha^S)n\beta c^2}{1-c}} \leq 
e^{\frac{Tn\beta c^2}{(1-c)}}
\end{equation}

Returning to our original goal of constructing a tail bound for $Z_t$, we then use Markov's inequality alongside \eqref{eq:lower_bound8} to yield the inequalities that for any $\tau > 0$,
\[
\mathbb{P}\big(Z_t \geq \tau\big) \leq \mathbb{P}\big(e^{cZ_t} \geq e^{c\tau}\big)
\leq \mathbb{E}\big(e^{cZ_t}\big)/e^{c\tau} \overset{(viii)}{\leq} \exp\Big(\frac{Tn\beta c^2}{1-c} - c\tau\Big)
\]
where $(viii)$ holds from \eqref{eq:lower_bound8}. Setting $c = \tau/(2Tn\beta+\tau)$ yields,
\[
\mathbb{P}\big(Z_t \geq \tau\big) \leq 
\exp\Big(\frac{-\tau^2}{4Tn\beta + 2\tau }\Big).
\]
By symmetry of $Z_t$ around 0, we equally obtain an equivalent upper-bound for $\mathbb{P}(-Z_t \geq \tau)$. This combines to form our desired bound,
\[
\mathbb{P}\big(|Z_t|\geq \tau\big) \leq 
2\exp\Big(\frac{-\tau^2}{4Tn\beta + 2\tau }\Big).
\]
Hence, by setting $\tau = c'\cdot\max\{(Tn\beta \log(T))^{1/2}, \log(T)\}$ for a universal $c'$,
we have 
\[
\mathbb{P}\big(|Z_t|\geq \tau\big) \leq \frac{1}{T^2}.
\]
Therefore, using a union bound, we are ensured with probability at least $1-1/T$, all $\{Z_t\}$'s are bounded by 
\[
c'\cdot\max[\{Tn\beta \log(T)\}^{1/2}, \log(T)]
= c' \cdot\max[\{n\gamma \log(T)\}^{1/2}, \log(T)]
\]
in magnitude simultaneously for all $t\in\mathcal{T}$.
\end{proof}

\vspace{1em}
Below, we upper-bound the probability the number of nodes that change membership across between any two consecutive time points is less than a particular threshold. The following lemma is different from Lemma \ref{lem:membership_changing_bound} for two main reasons: 1) 
Lemma \ref{lem:membership_changing_bound} handles the maximal difference between two membership matrices within a time interval, whereas the following lemma focuses on only two consecutive time points. 2) The following lemma will make an assumption about node's behavior within a time interval of $1/T$ that will simplify the proof.

\begin{lemma} \label{lem:tail_binomial}
Assume a two-community model (i.e., $K=2$) following the model described in Section \ref{sec:model} (using Assumption \ref{ass:bernoulli} instead of Assumption \ref{ass:poisson}). 
Then, the probability that more than
\[
\frac{5n\gamma}{T} + 4\log(T)
\]
nodes change membership between any two (fixed) consecutive time points $s,t \in \mathcal{T}$ (i.e., $t-s = 1/T$) is at most $1/T^2$.
\end{lemma}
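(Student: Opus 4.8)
The plan is to recognize that, under Assumption \ref{ass:bernoulli}, each of the $n$ nodes independently switches community between the two fixed consecutive times $s$ and $t = s + 1/T$ with probability $\gamma/T$, and in the two-community model every such switch is a genuine membership change. Hence the number of nodes that change membership between $s$ and $t$ is distributed exactly as $W \sim \text{Binomial}(n, \gamma/T)$. Writing $\mu = \mathbb E[W] = n\gamma/T$, the threshold in the statement is $5\mu + 4\log(T)$, so the goal is to show $\mathbb P\big(W > 5\mu + 4\log(T)\big) \le 1/T^2$.

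First I would apply a Bernstein-type upper-tail inequality for sums of bounded independent random variables — the same tool (Lemma 4.1.9 of \cite{de2012decoupling}) used in the proof of Lemma \ref{lem:membership_changing_bound} — using $\mathrm{Var}(W) \le \mu$ and the almost-sure bound $1$ on each summand. This gives, for every $\tau > 0$,
\[
\mathbb P\big(W \ge \mu + \tau\big) \le \exp\Big(\frac{-\tau^2/2}{\mu + \tau/3}\Big).
\]
Choosing $\tau = 4\mu + 4\log(T)$, so that $\mu + \tau = 5\mu + 4\log(T)$, reduces the claim to verifying the exponent bound $\tfrac{\tau^2/2}{\mu + \tau/3} \ge 2\log(T)$.

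Next I would split into the two regimes used in the proof of Lemma \ref{lem:membership_changing_bound}. If $\mu \ge \log(T)$, then $\tau \le 8\mu$, so $\mu + \tau/3 \le \tfrac{11}{3}\mu$ while $\tau^2/2 \ge 8\mu^2$, giving an exponent at least $\tfrac{24}{11}\mu \ge \tfrac{24}{11}\log(T) > 2\log(T)$. If instead $\mu < \log(T)$, then $\tau < 8\log(T)$, so $\mu + \tau/3 < \tfrac{11}{3}\log(T)$ while $\tau^2/2 \ge 8\log^2(T)$, giving an exponent again at least $\tfrac{24}{11}\log(T) > 2\log(T)$. In both cases $\mathbb P\big(W > 5\mu + 4\log(T)\big) \le e^{-2\log(T)} = 1/T^2$, which is the desired conclusion.

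The computation is routine, and there is no real obstacle beyond bookkeeping. The two points that deserve a little care are: (i) confirming that the count of changing nodes is \emph{exactly} Binomial, which is where the two-community assumption enters — a switch cannot return a node to its own community; and (ii) splitting the threshold $5\mu + 4\log(T)$ in the right multiplicative/additive proportions so that both the variance-dominated regime ($\mu \gtrsim \log T$) and the mean-dominated regime ($\mu \lesssim \log T$) yield a Bernstein exponent comfortably above $2\log(T)$. The constants $5$ and $4$ in the stated threshold leave ample slack for this, so no optimization of constants is needed.
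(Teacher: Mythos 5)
Your proposal is correct and follows essentially the same route as the paper's proof: identify the count of switching nodes as a Binomial$(n,\gamma/T)$ variable (noting that in the two-community model under Assumption \ref{ass:bernoulli} every switch is a genuine membership change), apply Bernstein's inequality with $\tau = 4n\gamma/T + 4\log(T)$, and split into the mean-dominated and variance-dominated regimes according to whether $n\gamma/T$ exceeds $\log(T)$. Your exponent bookkeeping (the $24/11 > 2$ margin) is in fact slightly cleaner than the paper's own arithmetic, but the argument is the same.
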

\begin{proof}

Consider the two events,
\begin{align*}
\mathcal{E}_1 &= \Big\{n \cdot L(M^{(s)}, M^{(t)}) \geq n\cdot \frac{\gamma}{T} + \tau \Big\}\\
\mathcal{E}_2 &= \Big\{(\# \text{ of nodes that change communities anytime between $s$ and $t$}) \geq n\cdot \frac{\gamma}{T} + \tau \Big\},
\end{align*}
where, recall, $n \cdot L(M^{(s)}, M^{(t)})$ is the number of nodes that change communities when comparing time $s$ to time $t$.
We are interested in bounding $(\mathcal{E}_1)$ for an appropriately chosen $\tau$. 
However, observe that $\mathcal{E}_1 \Rightarrow \mathcal{E}_2$, hence $\mathbb{P}(\mathcal{E}_1) \leq \mathbb{P}(\mathcal{E}_2)$. Therefore, we
are interested in bounding $\mathbb{P}(\mathcal{E}_2)$.

By Assumption \ref{ass:bernoulli}, each node changes memberships within a time interval of length $1/T$ independently of each other at rate $\gamma/T$. Hence,
\begin{equation} \label{eq:bernoulli_upper-tail_1}
\mathbb{P}(\mathcal{E}_2) = \mathbb{P}\Big(X \geq n\cdot \frac{\gamma}{T} + \tau\Big).
\end{equation}
Since there are only two communities and we assume that if nodes that change memberships deterministically do not return to the original membership within a time interval of $1/T$,
the $\text{Bernoulli}(\gamma/T)$ process of node membership changes in Assumption \ref{ass:bernoulli} allows us to model $X$ as a Bernoulli random variable with mean $n\gamma/T$. 

Therefore, to upper-bound the RHS of \eqref{eq:bernoulli_upper-tail_1}, we use Bernstein's inequality (for example, Lemma 4.1.9 from \cite{de2012decoupling}):
\begin{equation} \label{eq:bernoulli_upper-tail_2}
 \mathbb{P}\Big(X \geq n\cdot \frac{\gamma}{T} + \tau\Big) \leq \exp\Big(
 \frac{-\frac{1}{2}\tau^2}{n\gamma/T + \frac{1}{3}\tau}
 \Big).
\end{equation}
Consider $\tau = 4 n\gamma/T +4 \log(T)$. If $\log(T) > n\gamma/T$,  then we have
from \eqref{eq:bernoulli_upper-tail_2} that
\[
 \mathbb{P}\Big(X \geq n\cdot \frac{\gamma}{T} + \tau\Big) 
 \leq
 \exp\Big\{\frac{-16\log^2(T)}{9 \cdot 1/3 \cdot \log(T)}\Big\}
 \leq 1/T^2.
\]
Otherwise, if $n\gamma/T > \log(T)$, then  we have
from \eqref{eq:bernoulli_upper-tail_2} that
\[
 \mathbb{P}\Big(X \geq n\cdot \frac{\gamma}{T} + \tau\Big) 
 \leq
 \exp\Big\{\frac{-16(n\gamma/T)^2}{(1+8/3) \cdot n\gamma/T}\Big\}\leq 
  \exp\big(-2n\gamma/T\big) \leq   \exp\big(-2\log(n)\big) 
 \leq 1/T^2.
 \]
Putting everything together, we have shown that
\[
\mathbb{P}\Big\{n \cdot L(M^{(s)}, M^{(t)}) \geq 5 n \cdot \frac{\gamma}{T} + 4 \log(T)\Big\} \leq 1/T^2,
\]
and hence we are done.

\end{proof}

\section{Additional simulation}

\subsection{Simulation of homophilic networks}

The simulation investigated in Section \ref{sec:simulation} of the main text comprised a collection of both homophilic and heterophilic networks. 
Arguably, comparing KD-SoS to the ``PZ'' method proposed in \cite{pensky2019spectral} is unfair to the latter method, as it was not designed for such a setting. 
To investigate how the four methods in our Simulation setting perform in a more favorable setting than the PZ method, we simulated a separate scenario where the setup is identical to that in Section \ref{sec:simulation}, except that all the networks are homophilic. Specifically, 
\[
B^{(t)} = \begin{bmatrix}
0.62 & 0.22 & 0.46\\
0.22 & 0.62 & 0.46\\
0.46 & 0.46 & 0.85
\end{bmatrix} \quad \text{ for } t \in \mathcal{T}.
\]

The results are shown in Figure \ref{fig:simulation_stationary_all_simple}. 
We see that, in comparison to the original simulation setting with both homophilic and heterophilic networks shown in Figure \ref{fig:simulation_stationary_all-cleaned} in the main text, this simulation demonstrates that PZ outperforms all three other methods, including KD-SoS. 
This means that if all the networks were homophilic, summing the adjacency matrices within a certain bandwidth aggregates information more effectively than debiasing the sum of squared adjacency matrices.

However, despite these results, we still advocate for KD-SoS in practice, as it is challenging for practitioners to determine whether all the networks in their analysis are homophilic. 
KD-SoS can handle both homophilic and heterophilic networks without requiring this prior information.

\begin{figure}[tb]
  \centering
  \includegraphics[width=200px]{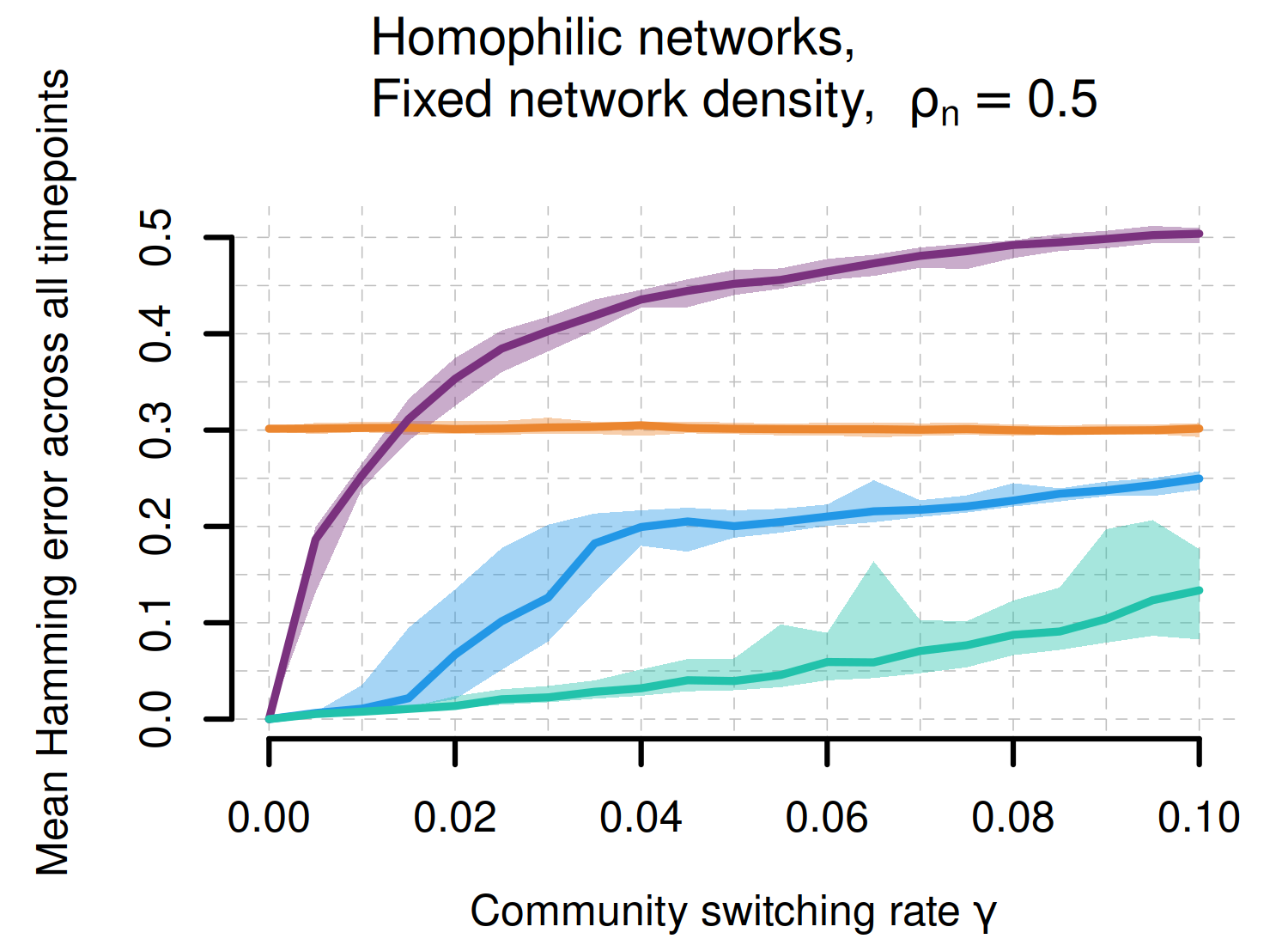}
  \quad
   \includegraphics[width=200px]{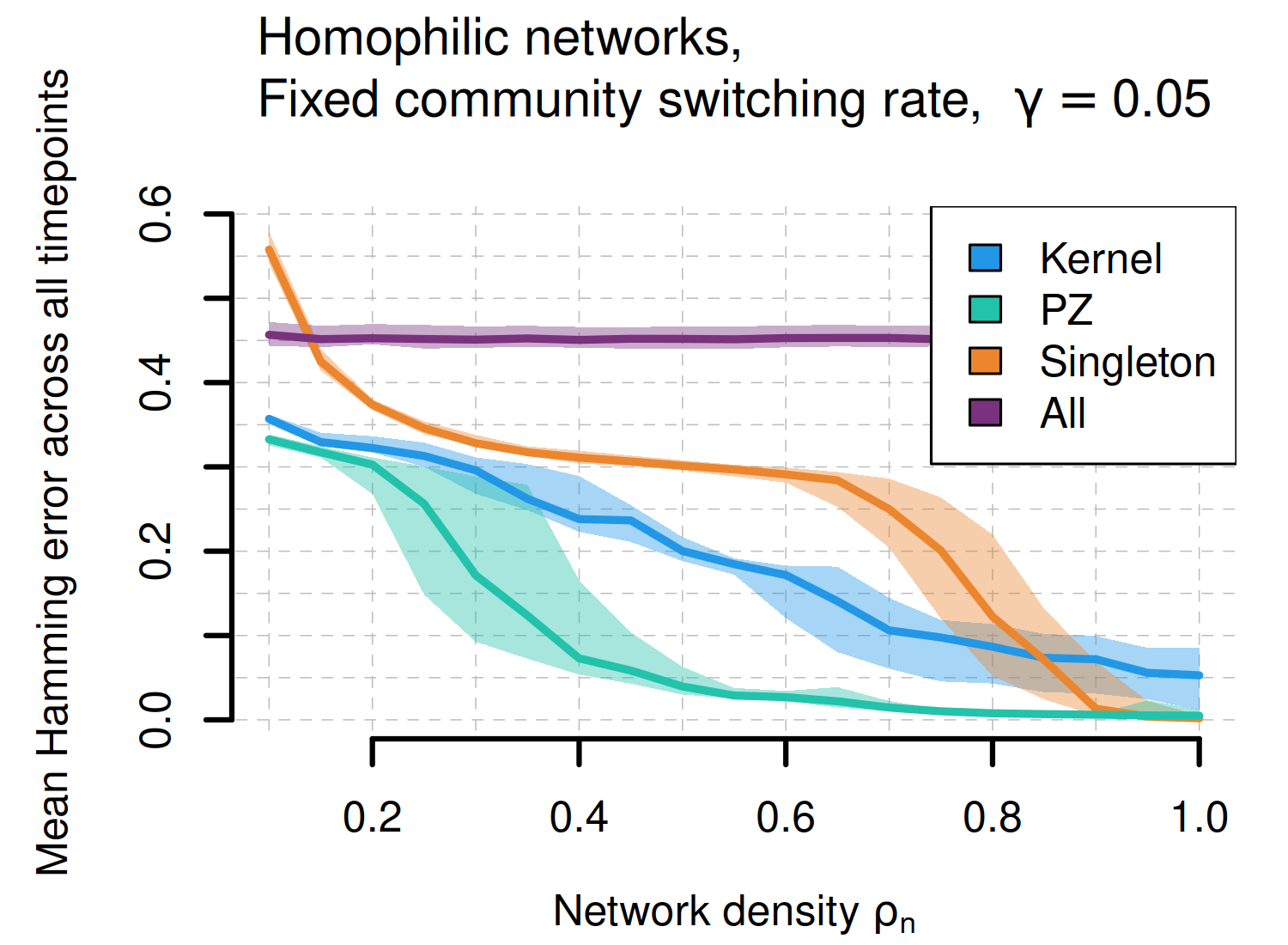}
  \caption
   { 
  Simulation of the ``pure'' homophilic setting, where the results are shown in the same layout of Figure \ref{fig:simulation_stationary_all-cleaned} in the main text.
   }
    \label{fig:simulation_stationary_all_simple}
\end{figure}

\subsection{Simulation of misspecified $K$}

While Section \ref{sec:tuning} in the main text documents a novel procedure to select the kernel bandwidth, another important question in practice is how to choose the number of communities. 
As mentioned in the Discussion section of the main text, this is a challenging goodness-of-fit statistical problem, where even questions about a single SBM network remain open, as noted in \cite{li2016network} and \cite{chen2018network}.
Nonetheless, we can investigate the empirical performance of KD-SoS with a misspecified number of communities $K$.

We construct a simulation setting using the setup described in Section \ref{sec:simulation} in the main text, and we focus on four specific values of the community switching rate $\gamma$ and network density $\rho_n$, specifically $(\gamma, \rho_n) = \{(0.01,0.3), (0.01,0.5), (0.05, 0.3), (0.05,0.5)\}$.
In each setting, we apply KD-SoS with $K\in\{2,3,4\}$ over 25 trials.

To evaluate the performance of KD-SoS with a misspecified $K$, we use the following procedure: First, for every estimated community, compute the Shannon entropy of the ``true'' community among the nodes in that estimated community.
Since there are three true communities, the distribution of true communities in every estimated community is a vector $(\hat{v}_1, \hat{v}_2, \hat{v}_3)$ where $\sum_{k=1}^3 \hat{v}_k = 1$ and $\hat{v}_k \geq 0$ for all $k \in \{1,\ldots,3\}$. The Shannon entropy is defined as
\[
-\sum_{k=1}^{3}\hat{v}_k \log(\hat{v}_k),
\]
where we define $0\log(0)=0$. 
A higher normalized Shannon entropy indicates that the community exhibits a more uniform distribution of true communities.
To score the overall clustering among all the estimated communities, we average (mean) the normalized Shannon entropy over all the estimated communities.
Given this metric, the ``best'' choice of $K$ would be one with the smallest normalized Shannon entropy, because it means that this choice of $K$ yields the most ``pure'' communities. 

Our results are shown in Figure \ref{fig:simulation_misspecified}. 
We observe that across three out of four simulation settings, the choice of $K=3$ (the appropriately specified number of communities) yields the best results, as it has the smallest normalized Shannon entropy. 
For $\rho_n=0.3$ and $\gamma=0.05$, all three choices of $K$ yield very similar normalized Shannon entropy. 
Additionally, we observe that it is often ``safer'' to specify too many communities than too few.
Conceptually, this is corroborated by theoretical results about estimating the number of communities in SBMs where, asymptotically, the probability of a method under-estimating the number of communities goes to zero, but it is challenging to bound the probability of a method over-estimating the number of communities (see \cite{chen2018network}).

\begin{figure}[tb]
  \centering
  \includegraphics[width=250px]{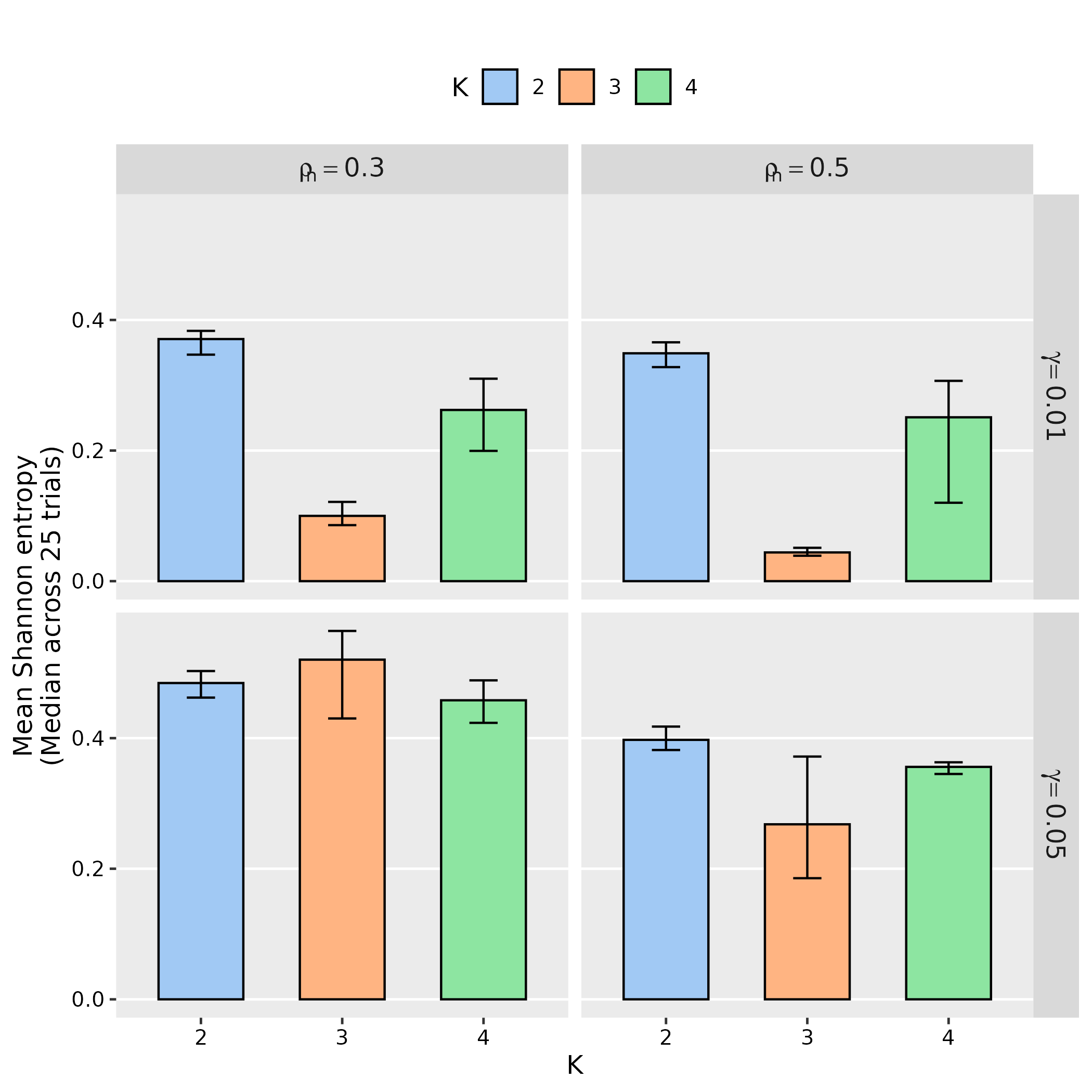}
  \caption
   { 
  Simulation of four different settings of network density $\rho_n$ and community switching rate $\gamma$ for the setting described in Section \ref{sec:simulation} in the main text with three ``true'' communities, where we apply KD-SoS with $K \in \{2,3,4\}$ (colored blue, orange, and green, respectively) even though there are only three true communities. The y-axis shows the median (over 25 trials) normalized Shannon entropy of the true communities within each estimated community, averaged across all the $K$ estimated communities. A smaller value on the y-axis denotes a more appropriate choice of $K$.
   }
    \label{fig:simulation_misspecified}
\end{figure}

\subsection{Simulation of non-stationary transition matrix}

The simulation in Section \ref{sec:simulation} in the main text had a stationary transition matrix dictating how nodes at time $t$ transition to a potentially different community at time $t+1$.
The simulation shown in the main text sets this transition matrix to the same value for all time $t \in \mathcal{T}$ for the sake of simplicity of exposition. 
We note that our theory does not require this to be the case. 
Here, we demonstrate a simulation where, even with a non-stationary transition matrix (i.e., the transition matrix changes as a function of time $t$ itself), KD-SoS still retains its excellent performance.

Towards this end, we construct a simulation setting using the setup described in Section \ref{sec:simulation} of the main text, except that we modify the transition matrix in \eqref{eq:simulation:stationary_markov} of the main text. Instead, in this simulation, the transition matrix is generated at random for every time $t$ according to the following procedure:
\begin{itemize}
    \item Initialize the the $K\times K$ transition matrix to have $1-\gamma$ along the diagonal.
    \item For every row $i \in \{1,\ldots,K\}$, sample a value $j \in \{1,\ldots,K\}\backslash \{i\}$ uniformly at random. Set entry $(i,j)$ of the transition matrix to be $\gamma$.
\end{itemize}
In this way, $100\cdot \gamma$ percent of the nodes in any community transition to a different community, and this receiving community can change from one time $t$ to the next.

We display our results in Figure \ref{fig:simulation_nonstationary} where we fix the network density $\rho_n=0.5$ and vary the community switching rate $\gamma$. This is analogous to Figure \ref{fig:simulation_stationary_all-cleaned} (left) in the main text, except the transition matrix is now non-stationary. Broadly speaking, the relative ordering of all four methods remains the same for all community switching rates $\gamma$ compared to the stationary setting shown in Figure \ref{fig:simulation_stationary_all-cleaned} (left).
Mainly, KD-SoS still outperforms the three other methods in this simulation setting, reinforcing the fact that our theory about KD-SoS's membership recovery does not depend on a stationary transition matrix.

\begin{figure}[tb]
  \centering
  \includegraphics[width=200px]{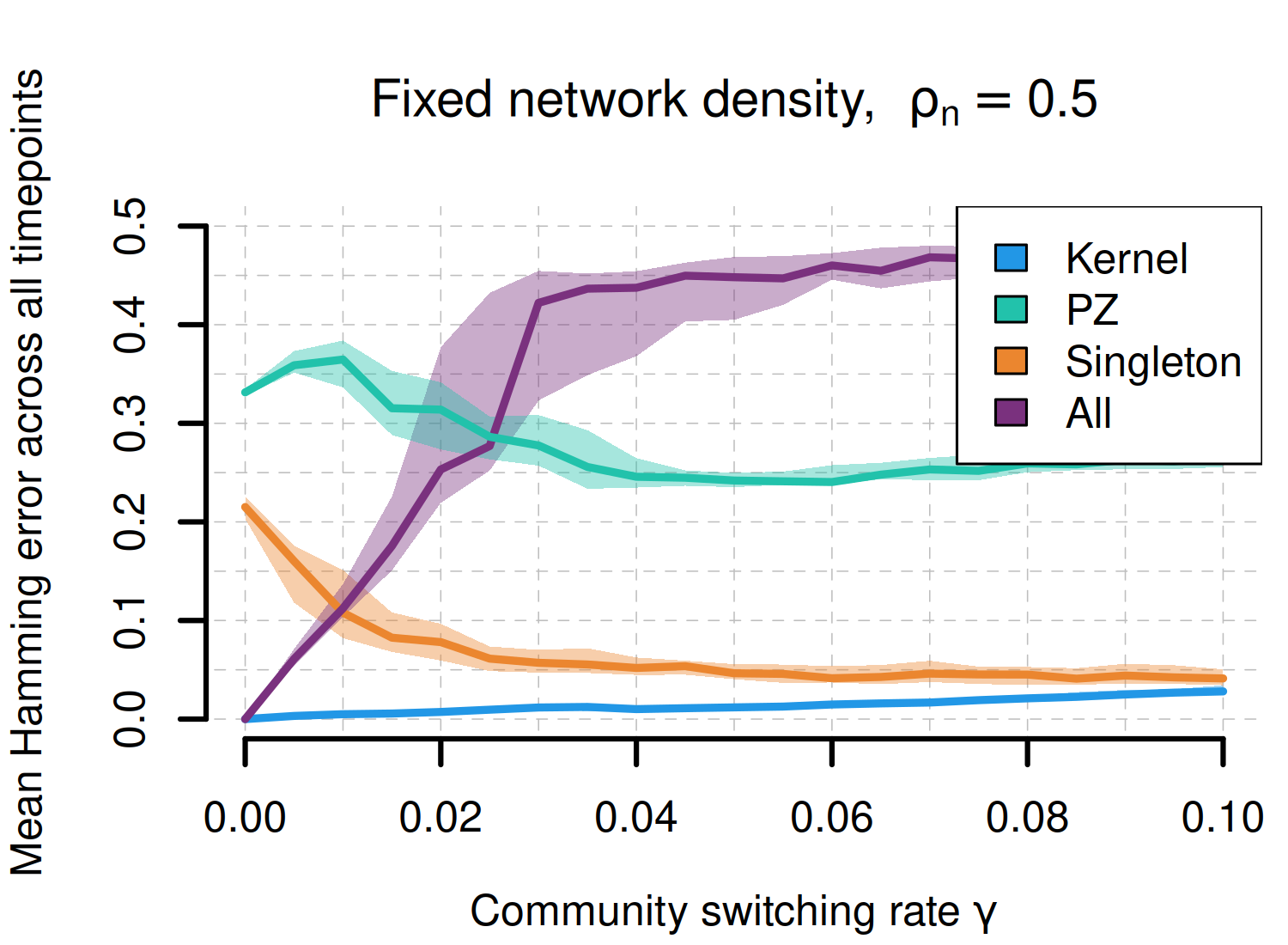}
  \caption
   { 
  Simulation of the simulation with a non-stationary transition matrix over time, where the results are shown in the exact layout of Figure \ref{fig:simulation_stationary_all-cleaned} in the main text.
   }
    \label{fig:simulation_nonstationary}
\end{figure}

\subsection{Simulation of changing network density over time}

Here, we investigate how the estimated bandwidth using the tuning procedure described in Section \ref{sec:tuning} in the main text varies with the network density $\rho_n$. 
We would expect that as the network density increases, the estimated bandwidth should increase. This is because a lower network density means there is less information in a network, necessitating a larger bandwidth to accumulate sufficient information across more networks.
Additionally, we investigate whether our bandwidth estimation procedure can handle more challenging settings where the network density $\rho_n$ varies over time $t$. Such a setting would require using our tuning procedure in a locally adaptive fashion.

To investigate both aspects, we construct a simulation setting using the setup described in Section \ref{sec:simulation} in the main text, except that we vary the network density $\rho_n$ with time $t$ and simulate 100 equally spaced time indices between $[0,1]$. Specifically, we vary $\rho_n$ varying between 0.25 and 1 based on a sinusoidal function of $t$, where $\rho_n(t) = 0.25$ for $t=0$ and $\rho_n(t) = 1$ for $t=1$ (Figure \ref{fig:simulation_dynamic_rho}, left). By having $\rho_n(t)$ vary with one and a half phases between $t\in[0,1]$, we are ensured that both sparse and dense matrices are equally affected by any potential boundary bias issue. Note that $\rho_n(t)=1$ does not imply the network is fully connected, see the construction of the probability matrix $Q^{(t)}$ in Equation \ref{eq:model:qt} in the main text.

To estimate a local bandwidth for each network at time $t$, we modify our tuning procedure described initially in Section \ref{sec:tuning} in the main text.
Specifically, our modified procedure is the following:
\begin{enumerate}
\item For each bandwidth $r \in \{r_1,\ldots,r_m\}$ at time $t\in[0,1]$, compute the score of the bandwidth $\theta_t(r)$ in the following way: Compute the leading eigenspaces of $\sum_{s \in \mathcal{S}}(A^{(s)})^2 - D^{(s)}$, where $\mathcal{S}$ is either $\mathcal{S}(t; c\cdot r)\backslash [0,t)$ or $\mathcal{S}(t;c \cdot r)\backslash (t,1]$ for $\mathcal{S}(t;c \cdot r)$ defined in \eqref{eq:z_box}. Then, compute the $\sin \Theta$ distance between these two eigenspaces via \eqref{eq:sin_theta}, denoted as $\theta_t(r)$.
\item Choose the optimal bandwidth with the smallest score, i.e., $\hat{r}_t = \argmin_{r \in \{r_1,\ldots,r_m\}} \theta_t(r)$.
\end{enumerate}

\begin{figure}[tb]
  \centering
  \includegraphics[width=200px]{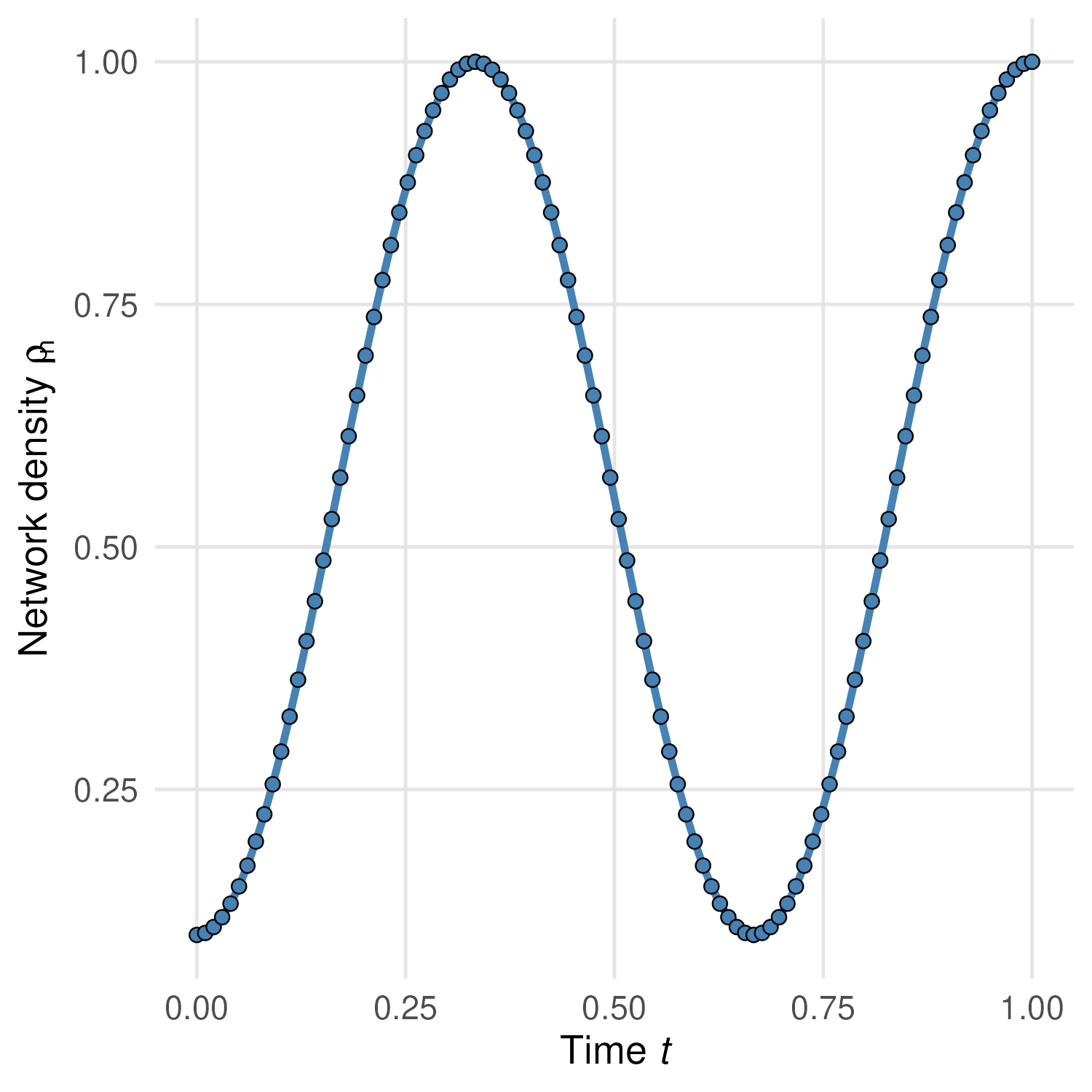}
  \quad
   \includegraphics[width=200px]{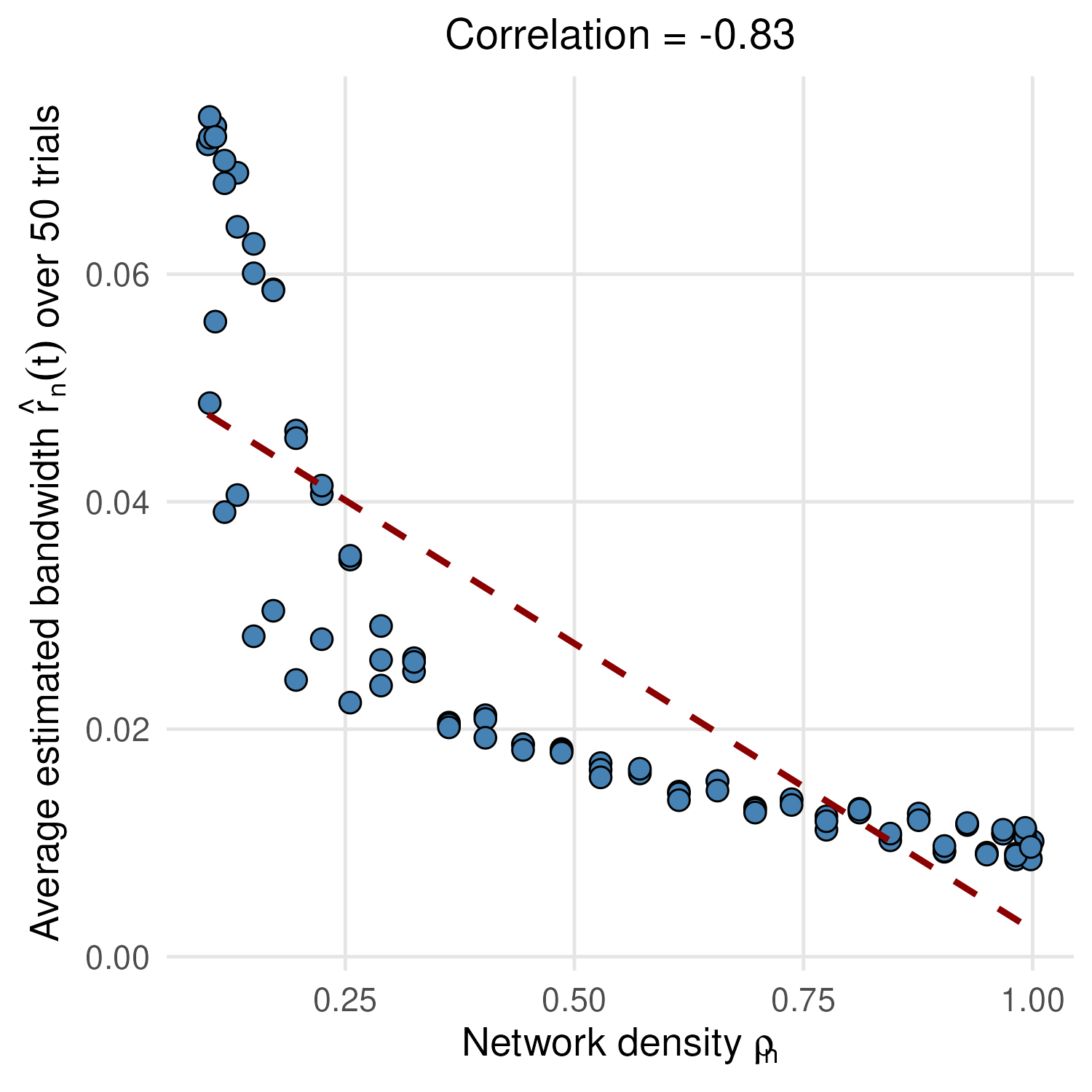}
  \caption
   { 
  Simulation of 100 networks where the network density $\rho_n$ varies with time $t$ (left). The estimated bandwidth $\hat{r}_t$ for each time index $t$, where we choose the bandwidth separately for each time index based on the smallest $\sin \Theta$ distance (right). 
  We observe that, in general, the bandwidth decreases as the network density increases.  
  The red dashed line denotes the linear regression line, which displays a correlation of $-0.83$.
   }
    \label{fig:simulation_dynamic_rho}
\end{figure}

We set $r_1,\ldots,r_m$ to vary from 0 to 0.1, equally spaced for $m=25$.
Our results are shown in Figure \ref{fig:simulation_dynamic_rho} (right).
We see that, averaged over 50 trials, the average bandwidth decreases with the network density $\rho_n$ as mathematically expected.
This demonstrates that our tuning procedure behaves appropriately and can be used even in settings where the network density varies with time.

\section{Additional details and plots of networks}
In this section, we provide preprocessing details and additional plots to display the results across all 12 networks.

\subsection{Preprocessing of networks}
The preprocessing consists of different steps: 1) preprocessing the scRNA-seq data via SAVER, 2) ordering the cells via pseudotime, and 3) constructing the 12 networks.

\begin{itemize}
\item Preprocessing the scRNA-seq data via SAVER:
Using the data from \cite{trevino2021chromatin}, we first extract
the cells labeled \texttt{In Glun trajectory} as well as in cell types \texttt{c8},
\texttt{c14}, \texttt{c2}, \texttt{c9}, \texttt{c5}, and \texttt{c7}, as labeled by the authors.
Additionally, we select genes that are marker genes for our selected cell types,
as well as the differentially expressed genes between glutamatergic neurons between 16 postconceptional weeks and 20-24 postconceptional weeks, both sets also labeled by the authors.

Using these selected cells and genes, we apply SAVER \citep{huang2018saver} to denoise the data using the default settings. We use this method over other existing denoising methods for scRNA-seq data since SAVER has been shown to validate and meaningfully retain correlations among genes experimentally.

\vspace{.5em}
\item Ordering the cells via pseudotime: To construct the pseudotime, we analyze the data based on the leading 10 principal components (after applying \texttt{Seurat::NormalizeData}, \texttt{Seurat::FindVariableFeatures}, \texttt{Seurat::ScaleData}, and \texttt{Seurat::RunPCA}). 
We then apply Slingshot \citep{street2018slingshot}
to the cells in this PCA embedding, based on ordering the cell types:  \texttt{c8},
followed by \texttt{c14}, followed by \texttt{c2}, followed by \texttt{c9} and \texttt{c5}, and finally followed by \texttt{c7}. (The authors provided this order.) This provides the appropriate ordering of the 18,160 cells.

\vspace{.5em}
\item Constructing the 12 networks: We now have the SAVER-denoised scRNA-seq data and the corresponding cell ordering.
Based on this ordering, we partition the 18,160 cells into 12 equally-sized bins.
For each bin, we compute the correlation matrix among all genes and convert this matrix into an adjacency matrix based on whether the correlation magnitude is above 0.75. 
Finally, once we have completed this for all 12 networks, we remove any genes whose median degree (across all 12 networks) is 0 or 1. This results in the 12 networks we analyze among the 993 genes.
\end{itemize}

\subsection{Selection of the number of latent dimensions $K$}

We show in Figure \ref{fig:appendix_K} the rationale for choosing $K=10$ in our analysis of the developing brain dataset.
Our diagnostic is based on the debiased sum of squared adjacency matrices, 
\[
\sum_{t=1}^{12}\Big[(A^{(t)})^2 - D^{(t)}\Big].
\]
We chose this matrix because it uniformly aggregates information across all time points, and we use it to gauge the appropriate number of latent dimensions before analyzing the time-varying dynamics.
We compute an eigen-decomposition of this matrix.
The scree plot in Figure \ref{fig:appendix_K}A demonstrates that $K=10$ has a visual ``elbow'' based on the decreasing eigenvalue.
Furthermore, with a target cumulative variance captured by the first number of latent dimensions. 
Empirically, we have found that capturing 90\% of the variance is a reasonable guideline for retaining biologically relevant information in our analysis.
We see in Figure \ref{fig:appendix_K}B that at $K=10$, this desired amount of variance is retained.

\begin{figure}[tb]
  \centering
  \includegraphics[width=350px]{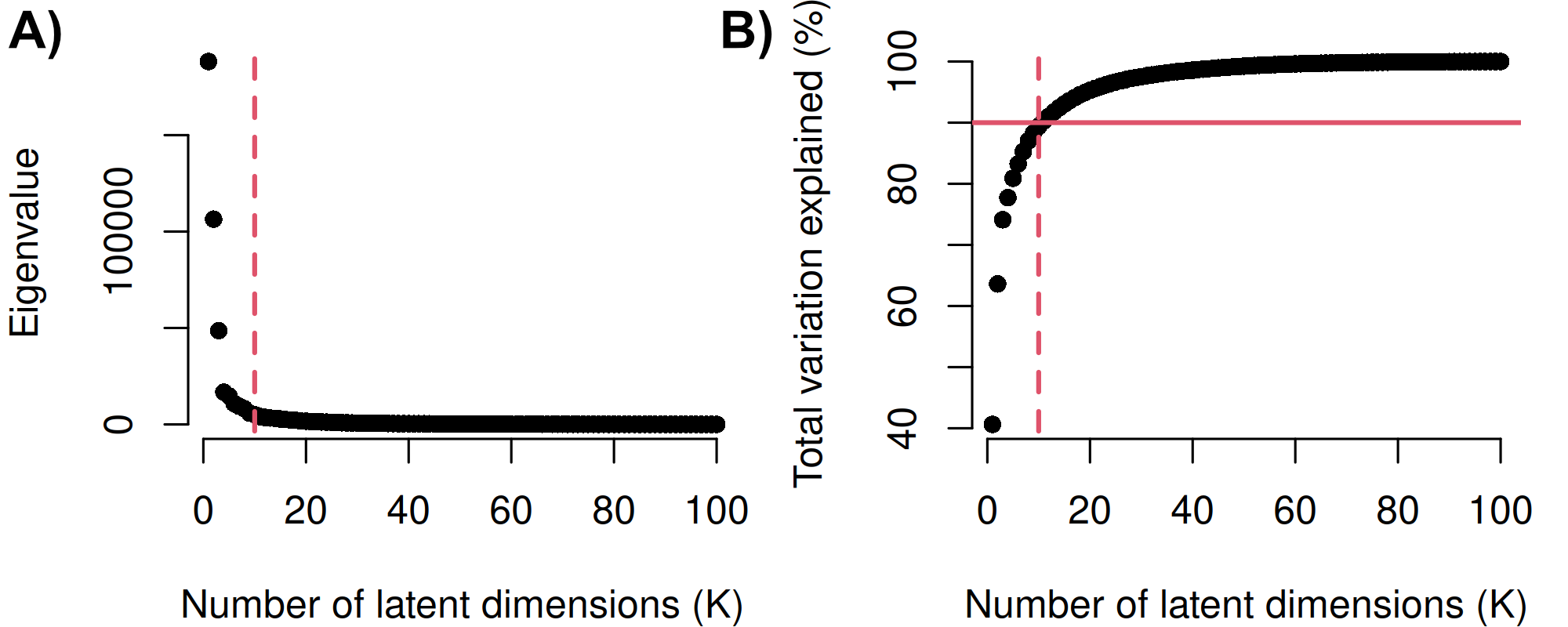}
  \caption
   { 
 A) Scree plot showing the value of eigenvalues of the sum of squared debiased adjacency matrices. 
 B) Cumulative variance captured by the first $K$ latent dimensions based on the eigen-decomposition of the sum of squared debiased adjacency matrices.
 The dashed red vertical line denotes $K=10$, the selected number of latent dimensions.
 The solid red horizontal line in (B) denotes the targeted 90\% of variance captured.
    }
    \label{fig:appendix_K}
\end{figure}

\subsection{Additional plots of results for developing brain}

In the following, we provide additional plots across all 12 networks, showing the communities within each network and how the gene memberships in one network relate to those in other networks.

In Figure \ref{fig:appendix_network-labeled}, we plot the gene memberships for each network, where the graphical layout is held fixed. We can visually observe that specific genes change memberships over time, but most genes do not often change memberships.

\begin{figure}[tb]
  \centering
  \includegraphics[width=300px]{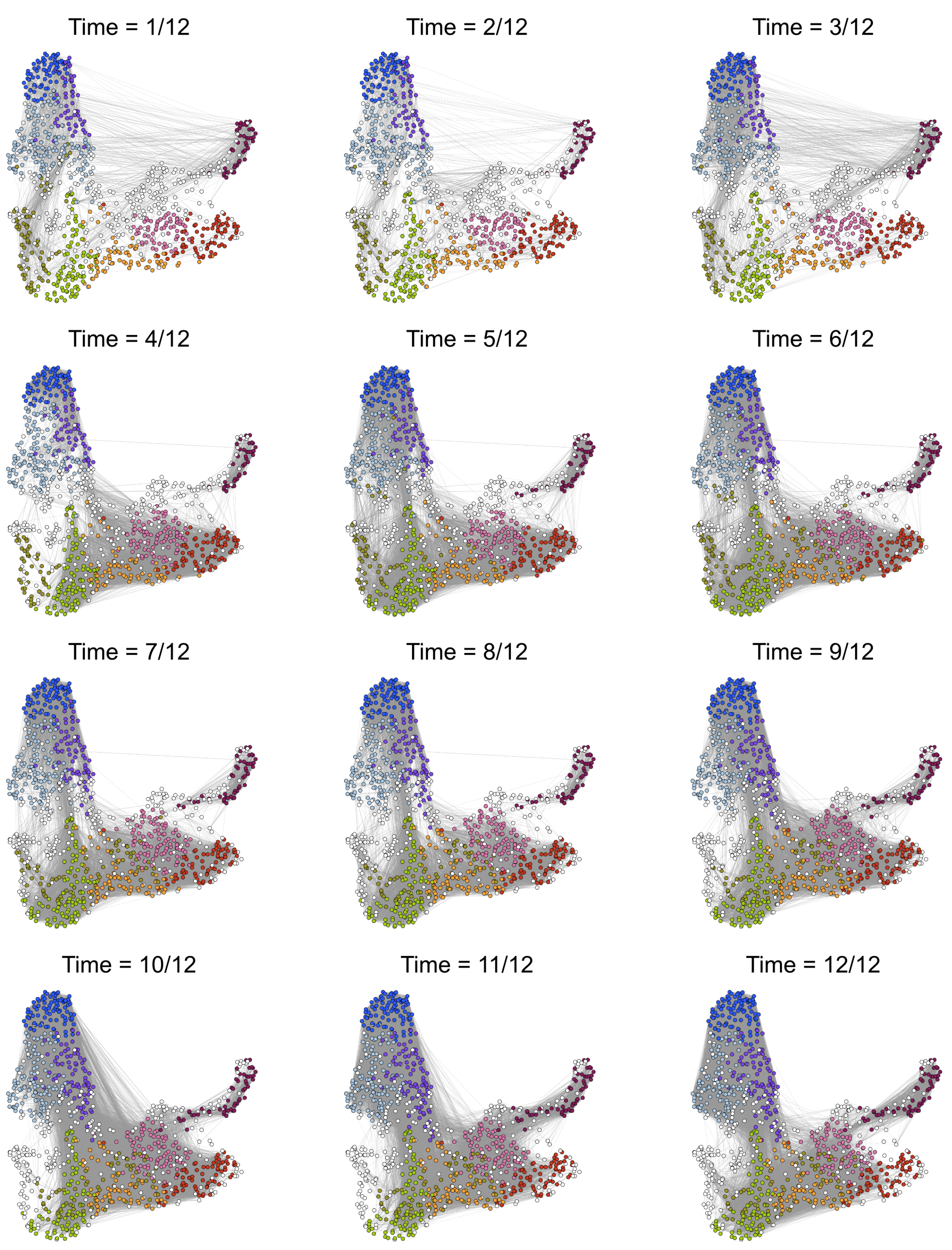}
  \caption
   { 
   Gene memberships across all 12 networks, where the graphical layout is fixed, and the gray lines denote edges between two correlated genes. Each gene is colored one of ten different colors (community 1 as burgundy, community 2 as red, community 3 as salmon, community 4 as orange, community 5 as lime, community 6 as olive, community 7 as purple, community 8 as purple, community 9 as blue, and community 10 as white).
    }
    \label{fig:appendix_network-labeled}
\end{figure}

In Figure \ref{fig:appendix_adjacency}, we plot each of the 12 networks as adjacency matrices (i.e., heatmaps), where the genes are reshuffled from one row/column to the next so that genes in each community are grouped together. We can see an obvious membership structure within each network and slightly varying community sizes across time.

\begin{figure}[tb]
  \centering
  \includegraphics[width=300px]{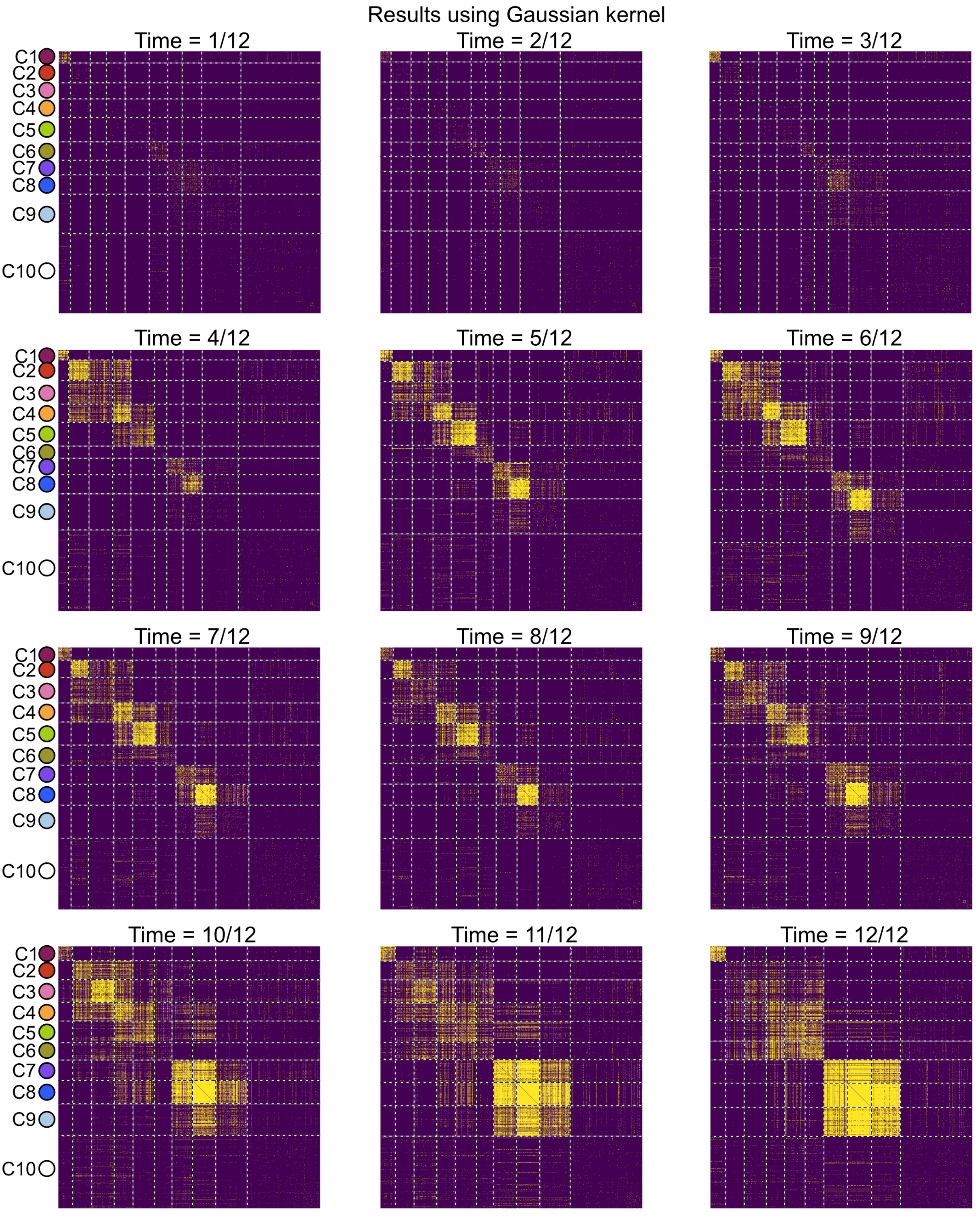}
  \caption
   { 
 Adjacency matrices for each of the 12 networks, where the genes are reshuffled in rows/columns from one plot to the next so that genes in each community are grouped together. The yellow color denotes an edge between two genes, while dark blue denotes the absence of an edge. The communities are separated visually by a white dotted line. The colors for each community are the same as in \ref{fig:appendix_network-labeled}.
    }
    \label{fig:appendix_adjacency}
\end{figure}

In Figure \ref{fig:appendix_connectivity}, we plot the connectivity within  and across communities, which better summarizes the adjacency matrices shown in 
Figure \ref{fig:appendix_adjacency}. Based on Sylvester's criterion, we can see that some of the 12 networks are indefinite (i.e., they contain negative eigenvalues) due to 2-by-2 submatrices along the diagonal that have negative eigenvalues.

\begin{figure}[tb]
  \centering
  \includegraphics[width=350px]{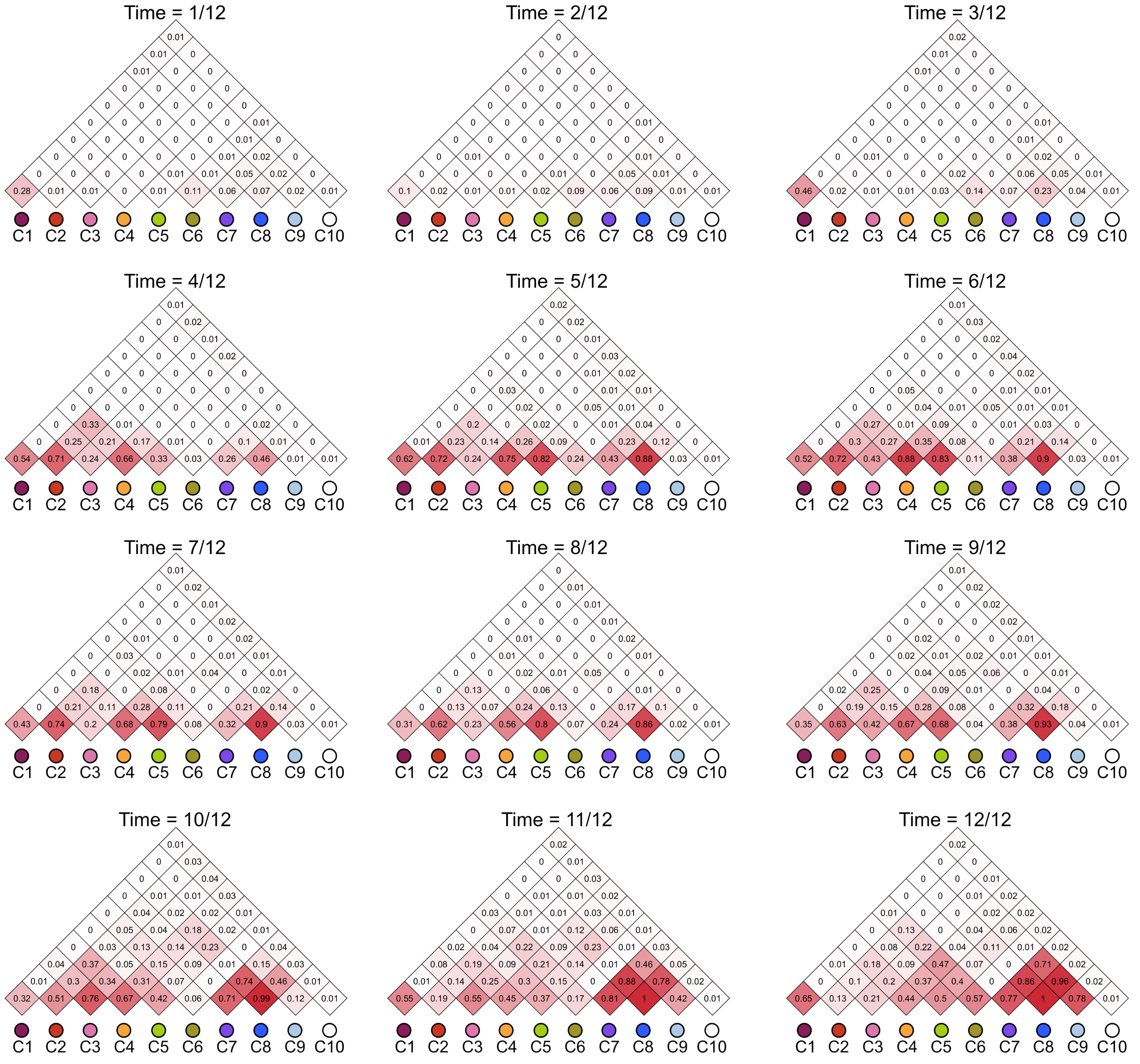}
  \caption
   { 
   Connectivity matrices as heatmaps for each of the 12 networks. The shown numbers denote the percentage of edges within or across communities (among all possible edges), and the colors range from white (i.e., connectivity of 0) to bright red (i.e., connectivity of 1). The colors for each community are the same as in \ref{fig:appendix_network-labeled}.
    }
    \label{fig:appendix_connectivity}
\end{figure}

Lastly, in Figure \ref{fig:appendix_alluvial}, we present the alluvial plots, illustrating how the membership structure evolves from one network to the next and how the 10-dimensional embedding effectively reveals the community structure within each network. This is an extended version of  Figure \ref{fig:alluvial} in the main text.

\begin{figure}[tb]
  \centering
  \includegraphics[width=350px]{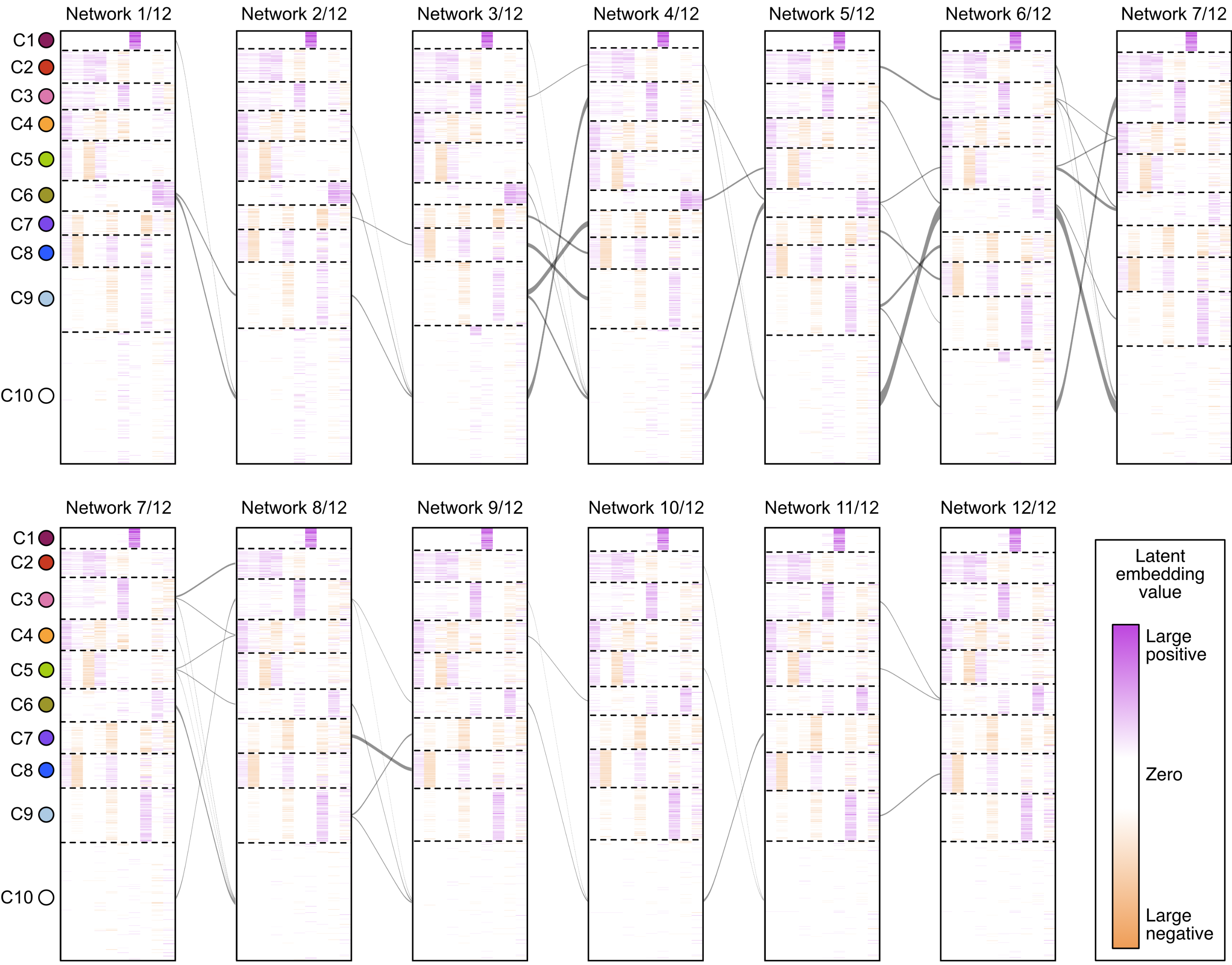}
  \caption
   { 
   Alluvial plots across all 12 networks. This is an extension of the main text's Figure  \ref{fig:alluvial}. The colors for each community are the same as in \ref{fig:appendix_network-labeled}.
      }
    \label{fig:appendix_alluvial}
\end{figure}

\subsection{Alternative analysis using a box kernel}
In Figure \ref{fig:appendix_adjacency_alternative}, we plot the estimated gene communities if we had used a box kernel. We observe that the 10 communities (which do not necessarily correspond one-to-one with the 10 estimated communities initially in Figure \ref{fig:appendix_adjacency}) remain unchanged across all 12 time points. 
That is, the estimated communities using a box kernel remain unchanged over time, even though KD-SoS allows for changes in membership. 
We suspect that this stems from the lack of smoothness in the box kernel.
Because networks are discretely included or excluded in the averaging of the box kernel, our tuning procedure is incentivized to average over all the networks in our data analysis, as the ``signal'' in our single-cell data is weaker than in our simulations.
This yields a non-changing gene community structure, which is biologically unrealistic \citep{fleck2022inferring,kamimoto2023dissecting}.

\begin{figure}[tb]
  \centering
  \includegraphics[width=300px]{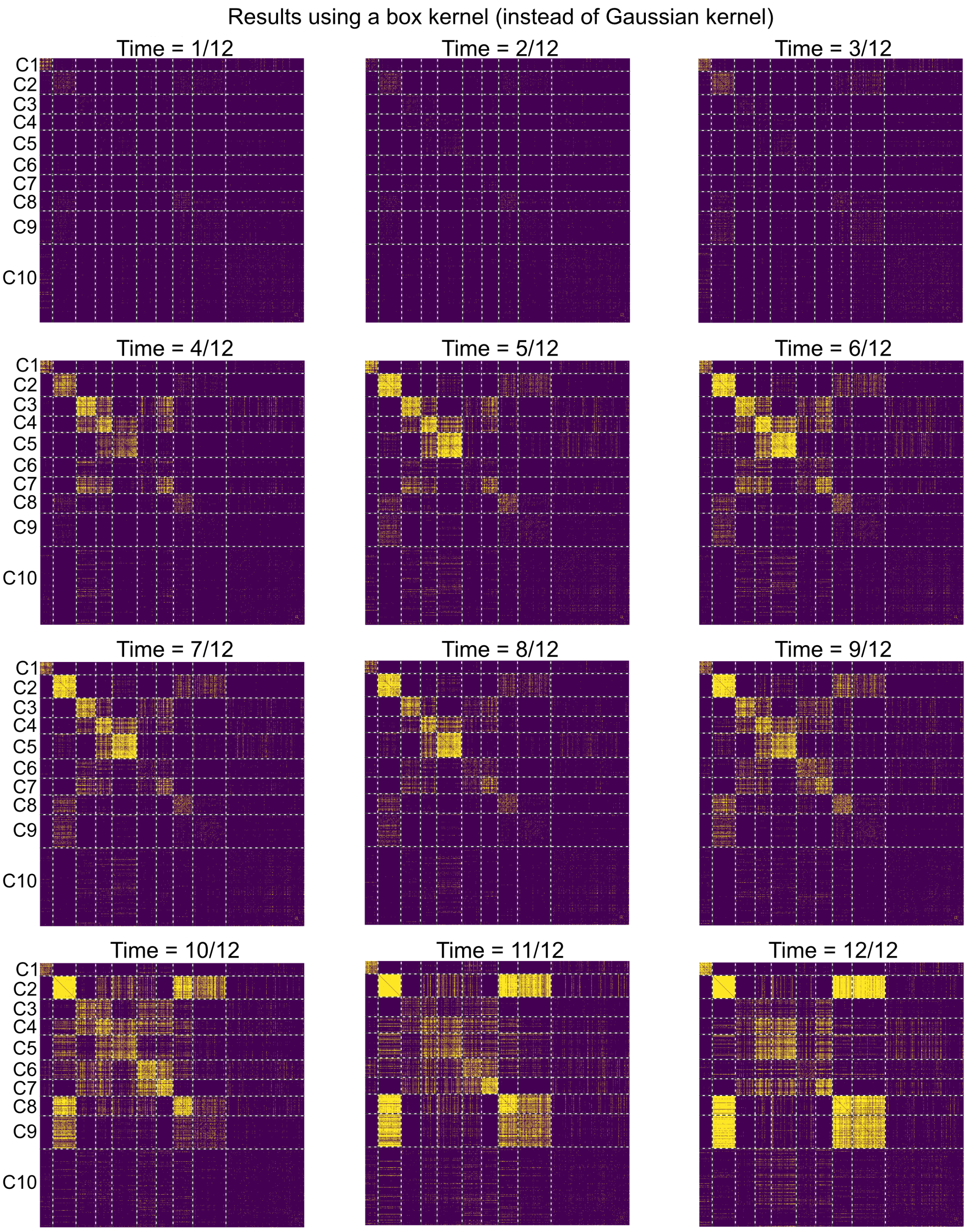}
  \caption
   { 
   Communities of the 12 networks, where the communities are estimated by KD-SoS using a box kernel. The communities do not change over the 12 time points. The figure is shown in the same format as Figure \ref{fig:appendix_adjacency}.
      }
    \label{fig:appendix_adjacency_alternative}
\end{figure}

\end{document}